\renewcommand{\@seccntformat}[1]{\bf\csname the#1\endcsname.}
\renewcommand{\section}{\@startsection{section}{1}
	\z@{.7\linespacing\@plus\linespacing}{.5\linespacing}
	{\normalfont\upshape\bfseries\centering}}
\renewcommand{\@biblabel}[1]{\@ifnotempty{#1}{#1.}}
\theoremstyle{plain}
\theoremstyle{definition}
\newtheorem{thm}{Theorem}[section]
\newtheorem{lem}[thm]{Lemma}
\newtheorem{prop}[thm]{Proposition}
\newtheorem{ex}[thm]{Example}
\theoremstyle{definition}
\newtheorem{defn}{Definition}[section]
\newtheorem{re}{Remark}[section]
\def\<{\langle}
\def\>{\rangle}
\def\A{\mathcal{A}}
\def\B{\mathcal{B}}
\def\a{\alpha}
\def\b{\beta}
\begin{document}
\setlength{\baselineskip}{1.2\baselineskip}
 
\title[Sania Asif \textsuperscript{1}, Mohamed Amin Sadraoui \textsuperscript{2}
 ]{Isoclinism in regular Hom-Lie Yamaguti algebras}
\author{Sania Asif\textsuperscript{1}, Mohamed Amin Sadraoui \textsuperscript{2}
}
\address{\textsuperscript{1} Institute of Mathematics, Henan Academy of Sciences, Zhengzhou, 450046, P.R. China.}
\address{\textsuperscript{2}University of Sfax, Faculty of Sciences of Sfax, BP 1171, 3038 Sfax, Tunisia.} 
\email{\textsuperscript{1}11835037@zju.edu.cn}
\email{\textsuperscript{2}aminsadrawi@gmail.com}
	
\begin{abstract}
In this paper, we develop the theory of \emph{isoclinism} for regular Hom-Lie Yamaguti algebras, a class that unifies several generalizations of Lie algebras. Although isomorphism implies isoclinism by definition, the converse is not true in general. We introduce the notion of a \emph{factor set} and use it to analyze the structure of isoclinism families. Our main result establishes that for finite-dimensional regular Hom-Lie Yamaguti algebras of the same dimension, isoclinism implies isomorphism. This generalizes recent classification theorems for Lie-Yamaguti algebras and Hom-Lie superalgebras, highlighting a strong rigidity property in the finite-dimensional setting. The proof relies on the existence of stem algebras and a decomposition theorem within isoclin families.
\end{abstract}
\subjclass[2020]{17B55; 17B99; 17B63; 17A30; 17B40.}
\keywords{Isoclinism; Regular Hom-Lie Yamaguti algebra; Stem algebra; Factor Set; Lie Triple system. }
\date{\today}
\footnote{\textbf{Corresponding Authors}: Sania Asif (Email: 11835037@zju.edu.cn);  Mohamed Amin Sadraoui (Email: aminsadrawi@gmail.com).}
\maketitle
\allowdisplaybreaks
\section{Introduction}\label{sec:intr}
The study of algebraic systems defined by multilinear operations has long occupied a central place in modern algebra, with Lie algebras standing as one of the most influential and well-developed frameworks. Since their inception in the late nineteenth century through the work found of Sophus Lie, Wilhelm Killing, and Elie Cartan, Lie algebras have served as the infinitesimal counterparts of Lie groups, providing a powerful algebraic language for symmetry in geometry, physics, and differential equations. Their structural richness, classification via root systems and Dynkin diagrams, and deep connections with representation theory have made them indispensable in both pure and applied mathematics. However, the algebraic landscape extends far beyond the classical Lie framework, encompassing generalizations that arise naturally in geometric, physical, and categorical contexts. Among these, Lie triple systems, Lie superalgebras, $n$-Lie algebras, Hom-algebras, and more recently, Lie-Yamaguti algebras and their Hom- and super-analogues, have emerged as significant objects of study. This paper, situated at the intersection of these generalizations, focuses on the concept of \emph{isoclinism} within the category of \emph{regular Hom-Lie Yamaguti algebras}, a class of algebras that unifies several of these structural extensions.

Isoclinism, originally introduced by Philip Hall in 1940 for finite $p$-groups \cite{H1940}, is an equivalence relation weaker than isomorphism that preserves certain structural invariants related to commutators and centers. In the context of group theory, two groups $G$ and $H$ are said to be \emph{isoclinic} if there exist isomorphisms $ \a : G/Z(G) \to H/Z(H)$ and $ \b : [G,G] \to [H,H]$ such that the commutator map is preserved, i.e., $ \b ([x,y]) = [ \a (x), \a (y)]$ for all $x,y \in G/Z(G)$. This notion proved instrumental in the classification of finite $p$-groups, organizing them into families where structural similarity outweighs strict isomorphism. The power of isoclinism lies in its ability to capture essential algebraic features, such as the structure of the center and the derived subalgebra, while allowing flexibility in the underlying group structure, making it a coarser but often more manageable classification tool.
The extension of isoclinism from group theory to Lie algebras was pioneered by K. Moneyhun in the 1990s \cite{M1994}. She defined two Lie algebras $\mathfrak{g}$ and $\mathfrak{h}$ to be \emph{isoclinic} if there exist isomorphisms $ \a : \mathfrak{g}/Z(\mathfrak{g}) \to \mathfrak{h}/Z(\mathfrak{h})$ and $ \b : [\mathfrak{g},\mathfrak{g}] \to [\mathfrak{h},\mathfrak{h}]$ such that $ \b ([x,y]) = [ \a (x), \a (y)]$ for all $x,y \in \mathfrak{g}/Z(\mathfrak{g})$. This definition naturally adapts the group-theoretic idea to the Lie algebra setting by replacing the commutator with the Lie bracket and the center with the algebraic center. A key result in Moneyhun’s work is that for finite-dimensional Lie algebras of the same dimension, isoclinism implies isomorphism. This striking equivalence, while not true in infinite dimensions, highlights the rigidity imposed by finite-dimensionality and dimensional equality in classification problems. Following this foundational work, the concept of isoclinism has been generalized to various algebraic structures. It has been extended to $n$-Lie algebras \cite{SM}, Lie superalgebras \cite{PNP2023}, and Hom-Lie algebras \cite{SAS2022}, each time preserving the core idea, which is to classify algebras up to structural similarity rather than strict isomorphism. In the case of Lie superalgebras, isoclinism was studied by Nayak et al. \cite{NPP2020} and further developed by Padhan, Nandi, and Pati \cite{PNP2023}, who also introduced the notion of a \emph{factor set} to describe central extensions within an isoclinism family. A factor set serves as a cohomological tool that measures the deviation of a given map from being a homomorphism and plays a crucial role in reconstructing algebras from their quotient and derived parts.

The introduction of Hom-algebras in the mid-2000s marked a significant shift in the study of algebraic deformations. Hom-Lie algebras, introduced by Hartwig et al. in \cite{HLS}, are triples $(\mathfrak{g}, [\cdot,\cdot], \a )$, where $[\cdot,\cdot]$ is a skew-symmetric bilinear operation and $ \a : \mathfrak{g} \to \mathfrak{g}$ is a linear map (the twisting map) that deforms the Jacobi identity into the Hom-Jacobi identity:
\begin{align*}
     \circlearrowleft_{x,y,z} [ \a (x), [y,z]] = 0.
\end{align*}  When $ \a = \mathrm{id}$, the structure reduces to a classical Lie algebra. These algebras arose from $\sigma$-derivations in quantum algebras and have found applications in $q$-deformations, integrable systems, and discrete geometry \cite{Z2019, M0, AWW2024, AWMB}.

Parallel to this, the theory of \emph{Lie-Yamaguti algebras} (also known as Kantor triple systems or generalized Lie triple systems) emerged from the geometric study of reductive homogeneous spaces. Introduced by Nomizu in the 1950s in the context of affine connections \cite{N1}, and later formalized by Yamaguti in the 1960s \cite{Y1}, a Lie-Yamaguti algebra is a vector space $L$ equipped with a bilinear operation $[\cdot,\cdot]$ and a trilinear operation $\{\cdot,\cdot,\cdot\}$, satisfying axioms that generalize both Lie algebras and Lie triple systems. Such algebras naturally arise in the decomposition $\mathfrak{g} = \mathfrak{h} \oplus \mathfrak{m}$ of a Lie algebra with respect to a reductive split, where $\mathfrak{m}$ inherits the structure of a Lie-Yamaguti algebra. The binary operation corresponds to the projection of the Lie bracket onto $\mathfrak{m}$, while the ternary operation encodes the action of $\mathfrak{h}$ on $\mathfrak{m}$ via the bracket. Subsequent work has led to developments in cohomology, deformation theory, and related structures such as pre-Lie-Yamaguti algebras and Lie-Yamaguti bialgebras \cite{AIS, B1, B2, B3, L2}.

The unification of Hom-structures and Lie-Yamaguti algebras leads to the notion of \emph{Hom-Lie Yamaguti algebras}, introduced by Gaparayi and Issa \cite{GI}. A Hom-Lie Yamaguti algebra is a quadruple $(A, [\cdot,\cdot], \{\cdot,\cdot,\cdot\}, \a )$, where the operations are twisted by a linear map $ \a $, satisfying $ \a $-skew-symmetry, $ \a $-twisted Jacobi-type identities, and compatibility conditions. When $ \a = \mathrm{id}$, it reduces to a classical Lie-Yamaguti algebra; when the ternary operation vanishes, it becomes a Hom-Lie algebra; when the binary operation vanishes, it reduces to a Hom-Lie triple system. Thus, Hom-Lie Yamaguti algebras form a broad and flexible framework encompassing several important algebraic structures \cite{ABS}. A particularly well-behaved subclass is that of \emph{regular Hom-Lie Yamaguti algebras}, in which the twisting map $ \a $ is required to be bijective. This condition ensures invertibility, which is essential for constructing representations, cohomology theories, and meaningful homomorphisms. As shown in Lemma~2.5 of this work, the regularity condition is necessary for the consistent development of isoclinism in this setting.

In this paper, we develop a rigorous theory of isoclinism for regular Hom-Lie Yamaguti algebras. While isomorphism always implies isoclinism by definition, the converse does not hold in general. Isoclinic algebras may differ by central extensions, even while sharing the same quotient and derived structures. To address this, we introduce the concept of a \emph{factor set} in this context. Lemma~\ref{lem:admits_factor_set} establishes the existence of a factor set for any stem algebra in an isoclinism family, enabling the reconstruction of algebras as central extensions and laying the foundation for a classification theorem.

Our main result states that for \emph{finite-dimensional regular Hom-Lie Yamaguti algebras of equal dimension}, isoclinism implies isomorphism. This extends recent results by Nandi \cite{N2025} on Lie-Yamaguti algebras and by Nandi, Padhan, and Pati \cite{NPP2022} on regular Hom-Lie superalgebras. The proof relies on the existence of a \emph{stem algebra} within each isoclinism family (Lemma~\ref{lem:stem_in_family}), where the center is contained in the derived subalgebra. In the finite-dimensional case, the stem algebra has minimal dimension. Combined with a decomposition theorem (analogous to \cite[Theorem 4.8]{N2025}), any algebra in the family decomposes as a direct sum of a stem algebra and an abelian algebra. Thus, if two algebras are isoclinic and of the same dimension, their stem parts and abelian complements must be isomorphic, implying that the algebras themselves are isomorphic.

The paper is organized as follows. Section ~\ref{sec:prelim} provides the necessary definitions and properties of Hom-Lie Yamaguti algebras, including Hom-ideals, the center, derived subalgebra, and quotient algebras. Section~\ref{sec:isoclinism} formally defines isoclinism and proves that it is an equivalence relation. Section~\ref{sec:factor_sets} introduces the concept of a factor set and establishes its role in central extensions and presents the main results: the existence of stem algebras, the decomposition theorem, and the proof that isoclinism implies isomorphism for finite-dimensional algebras of equal dimension. We conclude with remarks on future directions.

\section{Regular Hom-Lie Yamaguti Algberas}\label{sec:prelim}
Throughout this work, all vector spaces are defined over a field $\mathbb{K}$ of characteristic zero.
In what follows, we provide a brief overview of fundamental concepts related to Hom-Lie algebras and Hom-Lie-Yamaguti algebras. The notation $\circlearrowleft_{x, y, z}$ will be used to indicate summation over all cyclic permutations of the elements $x, y, z\in \A $. We have chosen to work on regular Hom-Lie Yamaguti algebras because the properties of isoclinism require it, as clearly demonstrated in the lemma \eqref{lem:Hom_ideal_center}.
\begin{defn}[{\cite{HLS,L0,M0}}]\label{Hom-Lie algebra definition}
	A \emph{Hom-Lie algebra} is defined as a triple $(\A , [-,-], \a _\A )$, where $\A $ is a vector space, $[-,-]: \A \times \A \to \A $ is a bilinear operation called the bracket, and $ \a _\A : \A \to \A $ is a linear map. These structures are required to satisfy the following conditions for all $x, y, z \in \A $,
	\begin{eqnarray*}
			[x, y] &=& -[y, x], \quad(\text{ skew-symmetry})\\
			\circlearrowleft_{x, y, z} [ \a _\A (x), [y, z]] &=& 0.\quad (\text{ Hom-Jacobi identity})
	\end{eqnarray*}
\end{defn}
Using skew-symmetry and bilinearity of the bracket, the Hom Jacobi identity for Hom-Lie algebras can be written as 
\begin{equation*}
	[ \a _\A (x), [y, z]] = [[x, y], \a _\A (z)] + [ \a _\A (y), [x, z]],\quad \text{ for all } x,y,z\in \A .
\end{equation*}
When $ \a _\A = \mathrm{Id}$, the Hom-Lie structure reduces to a classical Lie algebra, as the Hom-Jacobi identity simplifies to the standard Jacobi identity. 

Furthermore, the triple $(\A , [-,-], \a _\A )$ is called multiplicative Hom-Lie algebra if $ \a _\A \circ[-,-]=[-,-]\circ \a _\A ^{\otimes2}$. A multiplicative Hom-Lie algebra $(\A , [-,-], \a _\A )$ is said to be regular Hom-Lie algebra if $ \a _\A $ is bijective. Indeed, the bijectivity of $ \a _\mathcal A$ plays a crucial role in establishing the compatibility of the induced maps between quotient algebras and derived subalgebras, ensuring that the isoclinism diagrams commute and that the structural invariants are preserved. Without regularity, the twisting map may fail to induce well-defined isomorphisms on the quotient spaces, undermining the entire framework of isoclinism.

	\begin{defn}[\cite{G0}]
		A \emph{Hom-Lie-Yamaguti algebra} $(\A , [-,-], [-,-,-], \a _\A )$ consists of a linear space $\A $ equipped with a linear map $ \a _\A : \A \to \A $, a bilinear bracket $[ -,- ] : \A \times \A \to \A $, and a trilinear bracket $[ -,-,- ] : \A \times \A \times \A \to \A $, satisfying for all $x, y, z, w, t \in \A $
		\begin{enumerate}
			\item \label{item:HLYA1}$[x, y] = -[y, x], \quad [x, y, z] = -[y, x, z],$
			\item \label{item:HLYA2}$\circlearrowleft_{x, y, z} [[x, y], \a _\A (z)] + \circlearrowleft_{x, y, z} [x, y, z] = 0,$
			\item \label{item:HLYA3}$[[x, y], \a _\A (z), \a _\A (w)] = 0,$
			\item \label{item:HLYA4}$\circlearrowleft_{x, y, z}[ \a _\A (x), \a _\A (y), [z, w]] = [[x, y, z], \a _\A ^2(w)] + [ \a _\A ^2(z), [x, y, w]],$
			\item \label{item:HLYA5}$[ \a _\A ^2(x), \a _\A ^2(y), [z, w, t]] = [[x, y, z], \a _\A ^2(w), \a _\A ^2(t)] + [ \a _\A ^2(z), [x, y,w], \a _\A ^2(t)] + [ \a _\A ^2(z), \a _\A ^2(w), [x, y, t]].$
		\end{enumerate}

	\noindent
When $ \a _\A = \mathrm{Id}$, a Hom-Lie-Yamaguti algebra reduces to a Lie-Yamaguti algebra.
	\end{defn}
	\begin{defn}
	 The Hom-Lie Yamaguti algebra $(\A , [-,-], [-,-,-], \a _\A )$ is said to be multiplicative Hom-Lie Yamaguti algebra if $ \a _\A $ satisfies the following 
	\begin{equation*}
		 \a _\A ([x, y]) = [ \a _\A (x), \a _\A (y)], \qquad \a _\A ([x, y, z]) = [ \a _\A (x), \a _\A (y), \a _\A (z)].
	\end{equation*}
	\end{defn}
From now onward, we only consider multiplicative Hom-Lie Yamaguti algebra and we write Hom-Lie Yamaguti algebra by $(\A , [-,-], [-,-,-], \a _\A )$ or simply by $(\A , \a _\A )$. 
\begin{defn}
A multiplicative Hom-Lie Yamaguti algebra $(\A , [-,-], [-,-,-], \a _\A )$ is said to be regular Hom-Lie Yamaguti algebra is $ \a _\A $ is bijective. 
\end{defn}
 \begin{re}\label{rem:special_cases}
If the binary bracket vanishes, i.e., $[\cdot,\cdot] = 0$, then the Hom-Lie Yamaguti algebra $(\mathcal{A}, [\cdot,\cdot], [\cdot,\cdot,\cdot], \alpha_\mathcal{A})$ reduces to a \emph{Hom-Lie triple system} with respect to the twisting map $\alpha_\mathcal{A}^2$. Conversely, if the ternary bracket vanishes, i.e., $[\cdot,\cdot,\cdot] = 0$, the structure simplifies to a \emph{Hom-Lie algebra} $(\mathcal{A}, [\cdot,\cdot], \alpha_\mathcal{A})$.
\end{re}

In the following, we provide key definitions for Hom-Lie Yamaguti algebras.

\begin{enumerate} 
 \item A Hom-Lie Yamaguti algebra $(\mathcal{A}, [\cdot,\cdot], [\cdot,\cdot,\cdot], \alpha_\mathcal{A})$ is said to be \emph{abelian} if both operations are trivial:
 \[
 [x, y] = 0 \quad \text{and} \quad [x, y, z] = 0 \quad \text{for all } x, y, z \in \mathcal{A}.
 \]

 \item A subspace $\mathcal{B} \subseteq \mathcal{A}$ is a \emph{Hom-Lie Yamaguti subalgebra} of $\mathcal{A}$ if it satisfies the following:
 \begin{align*}
 \alpha_\mathcal{A}(\mathcal{B}) \subseteq \mathcal{B} , \qquad [\mathcal{B}, \mathcal{B}] \subseteq \mathcal{B},\qquad [\mathcal{B}, \mathcal{B}, \mathcal{B}] \subseteq \mathcal{B}.
 \end{align*} In this case, $\mathcal{B}$ inherits the structure of a Hom-Lie Yamaguti algebra with the restricted operations and twisting map:
 \[
 (\mathcal{B},\ [\cdot,\cdot]|_{\mathcal{B}\times\mathcal{B}},\ [\cdot,\cdot,\cdot]|_{\mathcal{B}\times\mathcal{B}\times\mathcal{B}},\ \alpha_\mathcal{A}|_\mathcal{B}).
 \]
 If $(\mathcal{A}, \alpha_\mathcal{A})$ is regular (i.e., $\alpha_\mathcal{A}$ is bijective), then $\mathcal{B}$ is called a \emph{regular Hom-Lie Yamaguti subalgebra} if the restriction $\alpha_\mathcal{A}|_\mathcal{B} \colon \mathcal{B} \to \mathcal{B}$ is also bijective.

 \item A Hom-Lie Yamaguti subalgebra $\mathcal{B} \subseteq \mathcal{A}$ is called a \emph{Hom-ideal} of $\mathcal{A}$ if the following conditions hold:
 \[
 [\mathcal{B}, \mathcal{A}] \subseteq \mathcal{B}, \quad
 [\mathcal{B}, \mathcal{A}, \mathcal{A}] \subseteq \mathcal{B}, \quad
 [\mathcal{A}, \mathcal{A}, \mathcal{B}] \subseteq \mathcal{B}.
 \]

 \item The \emph{center} of $\mathcal{A}$, denoted $\mathrm{Z}(\mathcal{A})$, is defined as:
 \[
 \mathrm{Z}(\mathcal{A}) = \left\{ b \in \mathcal{A} \mid [b, x] = 0,\ [b, x, y] = 0,\ [x, y, b] = 0,\ \forall\, x, y \in \mathcal{A} \right\}.
 \]

 \item The \emph{derived subalgebra} of $\mathcal{A}$, denoted $\mathcal{A}^2$, is the Hom-Lie Yamaguti subalgebra generated by the images of the two operations:
 \[
 \mathcal{A}^2 = [\mathcal{A}, \mathcal{A}] + [\mathcal{A}, \mathcal{A}, \mathcal{A}].
 \]

 \item A Hom-Lie Yamaguti algebra $\mathcal{A}$ is called a \emph{stem Hom-Lie Yamaguti algebra} if its center is contained in its derived subalgebra, i.e.,
 \[
 \mathrm{Z}(\mathcal{A}) \subseteq \mathcal{A}^2.
 \]
\end{enumerate}

\begin{defn}
	Let $(\A ,[-,-]_\A ,[-,-,-]_\A , \a _\A )$ and $( \B ,[-,-]_ \B ,[-,-,-]_ \B , \a _ \B )$ be two Hom-Lie Yamaguti algebras. A homomorphism $f:(\A ,[-,-]_\A ,[-,-,-]_\A , \a _\A ) \rightarrow ( \B ,[-,-]_ \B ,[-,-,-]_ \B , \a _ \B )$ is a linear map $f:\A \rightarrow \B $ such that $f([x,y]_\A )=[f(x),f(y)]_ \B $, $f([x,y,z]_\A )=[f(x),f(y),f(z)]_ \B $, and $f\circ \a _\A = \a _ \B \circ f$ for all $x,y,z\in \A $. It is said to be isomorphism if $f$ is bijective.
\end{defn}
\begin{lem}\label{lem:Hom_ideal_center}
Let $(\mathcal{A}, [\cdot,\cdot], [\cdot,\cdot,\cdot], \alpha_{\mathcal{A}})$ be a regular Hom-Lie Yamaguti algebra. Then the center $\mathrm{Z}(\mathcal{A})$ is a Hom-ideal of $\mathcal{A}$.
\end{lem}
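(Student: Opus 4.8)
The plan is to verify the defining conditions of a Hom-ideal for $\mathrm{Z}(\mathcal{A})$ directly from the definitions, separating the bracket conditions (which are essentially vacuous) from the single substantive point, namely $\alpha_{\mathcal{A}}$-invariance of the center.

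First I would note that $\mathrm{Z}(\mathcal{A})$ is a linear subspace of $\mathcal{A}$, which is immediate from the bilinearity of $[-,-]$ and the trilinearity of $[-,-,-]$. Next, the ideal-type inclusions $[\mathrm{Z}(\mathcal{A}),\mathcal{A}]\subseteq\mathrm{Z}(\mathcal{A})$, $[\mathrm{Z}(\mathcal{A}),\mathcal{A},\mathcal{A}]\subseteq\mathrm{Z}(\mathcal{A})$ and $[\mathcal{A},\mathcal{A},\mathrm{Z}(\mathcal{A})]\subseteq\mathrm{Z}(\mathcal{A})$ hold for free: by the very definition of $\mathrm{Z}(\mathcal{A})$ each of these subspaces is $\{0\}$, and $0\in\mathrm{Z}(\mathcal{A})$. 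The same observation, together with the skew-symmetry in item~\eqref{item:HLYA1} (so that vanishing in the first ternary slot forces vanishing in the second), also yields the subalgebra bracket conditions $[\mathrm{Z}(\mathcal{A}),\mathrm{Z}(\mathcal{A})]\subseteq\mathrm{Z}(\mathcal{A})$ and $[\mathrm{Z}(\mathcal{A}),\mathrm{Z}(\mathcal{A}),\mathrm{Z}(\mathcal{A})]\subseteq\mathrm{Z}(\mathcal{A})$.

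The one remaining condition, $\alpha_{\mathcal{A}}(\mathrm{Z}(\mathcal{A}))\subseteq\mathrm{Z}(\mathcal{A})$, is where I would invoke regularity. Given $b\in\mathrm{Z}(\mathcal{A})$ and arbitrary $x,y\in\mathcal{A}$, surjectivity of $\alpha_{\mathcal{A}}$ allows me to write $x=\alpha_{\mathcal{A}}(x')$ and $y=\alpha_{\mathcal{A}}(y')$ for some $x',y'\in\mathcal{A}$. Then multiplicativity of $\alpha_{\mathcal{A}}$ on both brackets gives
\[
[\alpha_{\mathcal{A}}(b),x]=[\alpha_{\mathcal{A}}(b),\alpha_{\mathcal{A}}(x')]=\alpha_{\mathcal{A}}([b,x'])=\alpha_{\mathcal{A}}(0)=0,
\]
and likewise $[\alpha_{\mathcal{A}}(b),x,y]=\alpha_{\mathcal{A}}([b,x',y'])=0$ and $[x,y,\alpha_{\mathcal{A}}(b)]=\alpha_{\mathcal{A}}([x',y',b])=0$, so that $\alpha_{\mathcal{A}}(b)\in\mathrm{Z}(\mathcal{A})$.

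I expect this last step to be the main—indeed essentially the only—obstacle: for a general (non-bijective) twisting map one cannot pull $x,y$ back along $\alpha_{\mathcal{A}}$, and $\mathrm{Z}(\mathcal{A})$ need not be $\alpha_{\mathcal{A}}$-stable, which is precisely the structural reason the regularity hypothesis is imposed throughout the paper.
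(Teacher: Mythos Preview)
Your proposal is correct and follows essentially the same approach as the paper: both arguments observe that the bracket-type inclusions are vacuous because the relevant sets equal $\{0\}$, and both establish $\alpha_{\mathcal{A}}$-invariance of the center by using surjectivity of $\alpha_{\mathcal{A}}$ to write arbitrary elements as $\alpha_{\mathcal{A}}$-images and then applying multiplicativity. Your added remark on why regularity is indispensable here matches the paper's motivation for imposing that hypothesis.
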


\begin{proof}
To prove that $\mathrm{Z}(\mathcal{A})$ is a Hom-ideal, we must show that:
\begin{enumerate}
 \item $\alpha_{\mathcal{A}}(\mathrm{Z}(\mathcal{A})) \subseteq \mathrm{Z}(\mathcal{A})$,
 \item $[\mathrm{Z}(\mathcal{A}), \mathcal{A}] \subseteq \mathrm{Z}(\mathcal{A})$,
 \item $[\mathrm{Z}(\mathcal{A}), \mathcal{A}, \mathcal{A}] \subseteq \mathrm{Z}(\mathcal{A})$,
 \item $[\mathcal{A}, \mathcal{A}, \mathrm{Z}(\mathcal{A})] \subseteq \mathrm{Z}(\mathcal{A})$.
\end{enumerate}
Recall that the center is defined as
\[
\mathrm{Z}(\mathcal{A}) = \{ x \in \mathcal{A} \mid [x, y] = 0,\ [x, y, z] = 0,\ [y, z, x] = 0,\ \forall y, z \in \mathcal{A} \}.
\]

We proceed step by step.

\medskip

\noindent \textbf{Step 1:} $\alpha_{\mathcal{A}}(\mathrm{Z}(\mathcal{A})) \subseteq \mathrm{Z}(\mathcal{A})$.

Let $x \in \mathrm{Z}(\mathcal{A})$. Then for any $y, z \in \mathcal{A}$, we have $[x, y] = 0$, $[x, y, z] = 0$, and $[y, z, x] = 0$. Since $\alpha_{\mathcal{A}}$ is a homomorphism of the algebra structure, we compute:
\[
[\alpha_{\mathcal{A}}(x), y] = [\alpha_{\mathcal{A}}(x), \alpha_{\mathcal{A}}(\alpha_{\mathcal{A}}^{-1}(y))] = \alpha_{\mathcal{A}}([x, \alpha_{\mathcal{A}}^{-1}(y)]) = \alpha_{\mathcal{A}}(0) = 0,
\]
where we used the regularity of $\alpha_{\mathcal{A}}$ (i.e., bijectivity) to write $y = \alpha_{\mathcal{A}}(\alpha_{\mathcal{A}}^{-1}(y))$, and the Hom-skew-symmetry and Hom-Jacobi-type identities. Similarly,
\[
[\alpha_{\mathcal{A}}(x), y, z] = [\alpha_{\mathcal{A}}(x), \alpha_{\mathcal{A}}(\alpha_{\mathcal{A}}^{-1}(y)), \alpha_{\mathcal{A}}(\alpha_{\mathcal{A}}^{-1}(z))] = \alpha_{\mathcal{A}}([x, \alpha_{\mathcal{A}}^{-1}(y), \alpha_{\mathcal{A}}^{-1}(z)]) = \alpha_{\mathcal{A}}(0) = 0,
\]
and
\[
[y, z, \alpha_{\mathcal{A}}(x)] = [\alpha_{\mathcal{A}}(\alpha_{\mathcal{A}}^{-1}(y)), \alpha_{\mathcal{A}}(\alpha_{\mathcal{A}}^{-1}(z)), \alpha_{\mathcal{A}}(x)] = \alpha_{\mathcal{A}}([\alpha_{\mathcal{A}}^{-1}(y), \alpha_{\mathcal{A}}^{-1}(z), x]) = \alpha_{\mathcal{A}}(0) = 0.
\]
Thus, $\alpha_{\mathcal{A}}(x) \in \mathrm{Z}(\mathcal{A})$, so $\alpha_{\mathcal{A}}(\mathrm{Z}(\mathcal{A})) \subseteq \mathrm{Z}(\mathcal{A})$.

\medskip

\noindent \textbf{Step 2:} $[\mathrm{Z}(\mathcal{A}), \mathcal{A}] \subseteq \mathrm{Z}(\mathcal{A})$.

Let $x \in \mathrm{Z}(\mathcal{A})$ and $a, y, z \in \mathcal{A}$. We show that $[x, a] \in \mathrm{Z}(\mathcal{A})$. Since $x \in \mathrm{Z}(\mathcal{A})$, $[x, a] = 0$, so trivially $[x, a] \in \mathrm{Z}(\mathcal{A})$. Hence, $[\mathrm{Z}(\mathcal{A}), \mathcal{A}] = \{0\} \subseteq \mathrm{Z}(\mathcal{A})$.

\medskip

\noindent \textbf{Step 3:} $[\mathrm{Z}(\mathcal{A}), \mathcal{A}, \mathcal{A}] \subseteq \mathrm{Z}(\mathcal{A})$.

Let $x \in \mathrm{Z}(\mathcal{A})$ and $a_1, a_2 \in \mathcal{A}$. Then $[x, a_1, a_2] = 0$, again by definition of the center. Thus $[\mathrm{Z}(\mathcal{A}), \mathcal{A}, \mathcal{A}] = \{0\} \subseteq \mathrm{Z}(\mathcal{A})$.

\medskip

\noindent \textbf{Step 4:} $[\mathcal{A}, \mathcal{A}, \mathrm{Z}(\mathcal{A})] \subseteq \mathrm{Z}(\mathcal{A})$.

Let $x \in \mathrm{Z}(\mathcal{A})$ and $a_1, a_2 \in \mathcal{A}$. Then $[a_1, a_2, x] = 0$ by definition of the center, so $[\mathcal{A}, \mathcal{A}, \mathrm{Z}(\mathcal{A})] = \{0\} \subseteq \mathrm{Z}(\mathcal{A})$.

\medskip

Since all four conditions are satisfied, we conclude that $\mathrm{Z}(\mathcal{A})$ is a Hom-ideal of $\mathcal{A}$.
\end{proof}
\begin{re}\label{rem:quotient_LYA}
Let $(\mathcal{A}, [\cdot,\cdot], [\cdot,\cdot,\cdot], \alpha_{\mathcal{A}})$ be a regular Hom-Lie Yamaguti algebra, and let $\mathcal{B}$ be a Hom-ideal of $\mathcal{A}$. Then the quotient vector space $\mathcal{A}/\mathcal{B}$ naturally inherits the structure of a Hom-Lie Yamaguti algebra, called the \emph{quotient Hom-Lie Yamaguti algebra}, denoted by $(\mathcal{A}/\mathcal{B}, [\cdot,\cdot]_{\mathcal{A}/\mathcal{B}}, [\cdot,\cdot,\cdot]_{\mathcal{A}/\mathcal{B}}, \overline{\alpha}_{\mathcal{A}})$.

The operations are defined as follows: for all $x, y, z \in \mathcal{A}$, and writing $\overline{x} = x + \mathcal{B}$ for the coset in $\mathcal{A}/\mathcal{B}$,
\begin{enumerate}
 \item the binary operation: 
 \[
 [\overline{x}, \overline{y}]_{\mathcal{A}/\mathcal{B}} = \overline{[x,y]} = [x,y] + \mathcal{B},
 \]
 
\item the ternary operation:
 \[
 [\overline{x}, \overline{y}, \overline{z}]_{\mathcal{A}/\mathcal{B}} = \overline{[x,y,z]} = [x,y,z] + \mathcal{B},
 \]
 
\item the twisting map:
 \[
 \overline{\alpha}_{\mathcal{A}}(\overline{x}) = \alpha_{\mathcal{A}}(x) + \mathcal{B}.\]
 
\end{enumerate}
\end{re}

\begin{re}\label{rem:quotient_regularity}
Let $(\mathcal{A}, [\cdot,\cdot]_\mathcal{A}, [\cdot,\cdot,\cdot]_\mathcal{A}, \alpha_\mathcal{A})$ be a \emph{regular} Hom-Lie Yamaguti algebra, and let $\mathcal{B}$ be a Hom-ideal of $\mathcal{A}$. Then the induced map $\overline{\alpha}_\mathcal{A} \colon \mathcal{A}/\mathcal{B} \to \mathcal{A}/\mathcal{B}$, defined by
\[
\overline{\alpha}_\mathcal{A}(\overline{x}) = \alpha_\mathcal{A}(x) + \mathcal{B},
\]
is \emph{invertible}. That is, $\overline{\alpha}_\mathcal{A}$ is bijective, and thus $(\mathcal{A}/\mathcal{B}, [\cdot,\cdot]_{\mathcal{A}/\mathcal{B}}, [\cdot,\cdot,\cdot]_{\mathcal{A}/\mathcal{B}}, \overline{\alpha}_\mathcal{A})$ is a \emph{regular} Hom-Lie Yamaguti algebra.
\end{re}

\begin{proof}
Since $(\mathcal{A}, \alpha_\mathcal{A})$ is regular, $\alpha_\mathcal{A} \colon \mathcal{A} \to \mathcal{A}$ is bijective. Furthermore, because $\mathcal{B}$ is a Hom-ideal, we have $\alpha_\mathcal{A}(\mathcal{B}) \subseteq \mathcal{B}$.

We first show that $\overline{\alpha}_\mathcal{A}$ is well-defined. Suppose $\overline{x} = \overline{y}$, i.e., $x - y \in \mathcal{B}$. Then $\alpha_\mathcal{A}(x - y) \in \mathcal{B}$, so $\alpha_\mathcal{A}(x) - \alpha_\mathcal{A}(y) \in \mathcal{B}$, which implies $\overline{\alpha}_\mathcal{A}(\overline{x}) = \overline{\alpha}_\mathcal{A}(\overline{y})$. Thus, $\overline{\alpha}_\mathcal{A}$ is well-defined.

Now, we claim that $\alpha_\mathcal{A}(\mathcal{B}) = \mathcal{B}$. We already have $\alpha_\mathcal{A}(\mathcal{B}) \subseteq \mathcal{B}$. For the reverse inclusion, let $b \in \mathcal{B}$. Since $\alpha_\mathcal{A}$ is surjective, there exists $a \in \mathcal{A}$ such that $\alpha_\mathcal{A}(a) = b$. We need to show that $a \in \mathcal{B}$.

Suppose, for contradiction, that $a \notin \mathcal{B}$. Since $\alpha_\mathcal{A}(a) = b \in \mathcal{B}$, and $\mathcal{B}$ is a Hom-ideal, we have $\alpha_\mathcal{A}(a) \in \mathcal{B}$. However, because $\alpha_\mathcal{A}$ is injective and $\alpha_\mathcal{A}(\mathcal{B}) \subseteq \mathcal{B}$, the preimage of $\mathcal{B}$ under $\alpha_\mathcal{A}$ must be contained in $\mathcal{B}$. That is,
\[
\alpha_\mathcal{A}^{-1}(\mathcal{B}) \subseteq \mathcal{B}.
\]
But $b \in \mathcal{B}$ and $a = \alpha_\mathcal{A}^{-1}(b)$, so $a \in \alpha_\mathcal{A}^{-1}(\mathcal{B}) \subseteq \mathcal{B}$, a contradiction. Therefore, $a \in \mathcal{B}$, and so $b = \alpha_\mathcal{A}(a) \in \alpha_\mathcal{A}(\mathcal{B})$. This proves $\mathcal{B} \subseteq \alpha_\mathcal{A}(\mathcal{B})$, and hence $\alpha_\mathcal{A}(\mathcal{B}) = \mathcal{B}$.

It follows that $\alpha_\mathcal{A}^{-1}(\mathcal{B}) = \mathcal{B}$.

Now define a map $\overline{\alpha}_\mathcal{A}^{-1} \colon \mathcal{A}/\mathcal{B} \to \mathcal{A}/\mathcal{B}$ by
\[
\overline{\alpha}_\mathcal{A}^{-1}(\overline{x}) = \alpha_\mathcal{A}^{-1}(x) + \mathcal{B}.
\]

We verify that this is well-defined. If $\overline{x} = \overline{y}$, then $x - y \in \mathcal{B}$, so $\alpha_\mathcal{A}^{-1}(x - y) \in \mathcal{B}$ (since $\alpha_\mathcal{A}^{-1}(\mathcal{B}) = \mathcal{B}$), and thus $\overline{\alpha}_\mathcal{A}^{-1}(\overline{x}) = \overline{\alpha}_\mathcal{A}^{-1}(\overline{y})$.

Now check that $\overline{\alpha}_\mathcal{A}^{-1}$ is indeed the inverse:
\begin{align*}
(\overline{\alpha}_\mathcal{A}^{-1} \circ \overline{\alpha}_\mathcal{A})(\overline{x})
&= \overline{\alpha}_\mathcal{A}^{-1}(\alpha_\mathcal{A}(x) + \mathcal{B}) \\
&= \alpha_\mathcal{A}^{-1}(\alpha_\mathcal{A}(x)) + \mathcal{B} = x + \mathcal{B} = \overline{x}.
\end{align*}
Similarly,
\begin{align*}
(\overline{\alpha}_\mathcal{A} \circ \overline{\alpha}_\mathcal{A}^{-1})(\overline{x})
&= \overline{\alpha}_\mathcal{A}(\alpha_\mathcal{A}^{-1}(x) + \mathcal{B}) \\
&= \alpha_\mathcal{A}(\alpha_\mathcal{A}^{-1}(x)) + \mathcal{B} = x + \mathcal{B} = \overline{x}.
\end{align*}

Therefore, $\overline{\alpha}_\mathcal{A}$ is bijective, with inverse $\overline{\alpha}_\mathcal{A}^{-1}$ as defined.
\end{proof}
\begin{re}\label{rem:direct_sum_LYA}
Let $(\mathcal{A}, [\cdot,\cdot]_\mathcal{A}, [\cdot,\cdot,\cdot]_\mathcal{A}, \alpha_\mathcal{A})$ and $(\mathcal{B}, [\cdot,\cdot]_\mathcal{B}, [\cdot,\cdot,\cdot]_\mathcal{B}, \alpha_\mathcal{B})$ be two Hom-Lie Yamaguti algebras. Their \emph{direct sum}, denoted by
\[
(\mathcal{A} \oplus \mathcal{B}, [\cdot,\cdot]_{\mathcal{A} \oplus \mathcal{B}}, [\cdot,\cdot,\cdot]_{\mathcal{A} \oplus \mathcal{B}}, \alpha_{\mathcal{A} \oplus \mathcal{B}}),
\]
is defined as follows: for all $x, y, z \in \mathcal{A}$ and $a, b, c \in \mathcal{B}$,
 \begin{enumerate}
 \item the binary operation:
 \[
 [(x , a), (y , b)]_{\mathcal{A} \oplus \mathcal{B}} = ([x, y]_\mathcal{A} , [a, b]_\mathcal{B}),
 \]
 
\item the ternary operation:
 \[
 [(x , a), (y , b), (z , c)]_{\mathcal{A} \oplus \mathcal{B}} = ([x, y, z]_\mathcal{A} , [a, b, c]_\mathcal{B}),
 \]
\item the twisting map:
 \[
 \alpha_{\mathcal{A} \oplus \mathcal{B}}(x , a) = (\alpha_\mathcal{A}(x) , \alpha_\mathcal{B}(a)).\]
 \end{enumerate} 
 is a Hom-Lie Yamaguati algebra.
\end{re}

\section{Isoclinism in regular Hom-Lie Yamaguti Algberas}\label{sec:isoclinism}
In this section, we introduce and develop the concept of isoclinism for regular Hom-Lie Yamaguti algebras. This section lays the foundation for the classification results in the subsequent section, where we use isoclinism to study the structure of Hom-Lie Yamaguti algebras, particularly through the lens of stem algebras and factor sets.
\begin{defn}\label{def:isoclinism_HLYA}
 Let $(\mathcal{A}, [\cdot,\cdot]_\mathcal{A}, [\cdot,\cdot,\cdot]_\mathcal{A}, \alpha_\mathcal{A})$ and $(\mathcal{B}, [\cdot,\cdot]_\mathcal{B}, [\cdot,\cdot,\cdot]_\mathcal{B}, \alpha_\mathcal{B})$ be two regular Hom-Lie Yamaguti algebras. Consider the quotient algebras $\mathcal{A}/\mathrm{Z}(\mathcal{A})$ and $\mathcal{B}/\mathrm{Z}(\mathcal{B})$, and define the following canonical maps:
\begin{align*}
 & \tau^{(2)} \colon \frac{\mathcal{A}}{\mathrm{Z}(\mathcal{A})} \times \frac{\mathcal{A}}{\mathrm{Z}(\mathcal{A})} \to \mathcal{A}^2 , \quad \tau^{(2)}(\bar{x}, \bar{y}) = [x, y]_\mathcal{A} ,&\\
& \tau^{(3)} \colon \frac{\mathcal{A}}{\mathrm{Z}(\mathcal{A})} \times \frac{\mathcal{A}}{\mathrm{Z}(\mathcal{A})} \times \frac{\mathcal{A}}{\mathrm{Z}(\mathcal{A})} \to \mathcal{A}^2 , \quad \tau^{(3)}(\bar{x}, \bar{y}, \bar{z}) = [x, y, z]_\mathcal{A} ,&\\
& \delta^{(2)} \colon \frac{\mathcal{B}}{\mathrm{Z}(\mathcal{B})} \times \frac{\mathcal{B}}{\mathrm{Z}(\mathcal{B})} \to \mathcal{B}^2 , \quad \delta^{(2)}(\bar{a}, \bar{b}) = [a, b]_\mathcal{B} ,&\\
& \delta^{(3)} \colon \frac{\mathcal{B}}{\mathrm{Z}(\mathcal{B})} \times \frac{\mathcal{B}}{\mathrm{Z}(\mathcal{B})} \times \frac{\mathcal{B}}{\mathrm{Z}(\mathcal{B})} \to \mathcal{B}^2, \quad \delta^{(3)}(\bar{a}, \bar{b}, \bar{c}) = [a, b, c]_\mathcal{B} ,&
\end{align*}
where $\bar{x} = x + \mathrm{Z}(\mathcal{A})$, $\bar{a} = a + \mathrm{Z}(\mathcal{B})$, and $\mathcal{A}^2$, $\mathcal{B}^2$ denote the derived subalgebras of $\mathcal{A}$ and $\mathcal{B}$, respectively, i.e., the subalgebras generated by all elements of the form $[x,y]_\mathcal{A}$, $[x,y,z]_\mathcal{A}$ and $[a,b]_\mathcal{B}$, $[a,b,c]_\mathcal{B}$.

Let $\theta \colon \mathcal{A}/\mathrm{Z}(\mathcal{A}) \to \mathcal{B}/\mathrm{Z}(\mathcal{B})$ be a linear map and $\beta \colon \mathcal{A}^2 \to \mathcal{B}^2$ a homomorphism of the derived subalgebras. We say that the pair $(\theta, \beta)$ is a \emph{homoclinism} if the following two diagrams commute:
\[
\begin{CD}
\frac{\mathcal{A}}{\mathrm{Z}(\mathcal{A})} \times \frac{\mathcal{A}}{\mathrm{Z}(\mathcal{A})} @>{\tau^{(2)}}>> \mathcal{A}^2 \\
@V{\theta \times \theta}VV @V{\beta}VV \\
\frac{\mathcal{B}}{\mathrm{Z}(\mathcal{B})} \times \frac{\mathcal{B}}{\mathrm{Z}(\mathcal{B})} @>{\delta^{(2)}}>> \mathcal{B}^2
\end{CD}
\quad\quad
\begin{CD}
\frac{\mathcal{A}}{\mathrm{Z}(\mathcal{A})} \times \frac{\mathcal{A}}{\mathrm{Z}(\mathcal{A})} \times \frac{\mathcal{A}}{\mathrm{Z}(\mathcal{A})} @>{\tau^{(3)}}>> \mathcal{A}^2 \\
@V{\theta \times \theta \times \theta}VV @V{\beta}VV \\
\frac{\mathcal{B}}{\mathrm{Z}(\mathcal{B})} \times \frac{\mathcal{B}}{\mathrm{Z}(\mathcal{B})} \times \frac{\mathcal{B}}{\mathrm{Z}(\mathcal{B})} @>{\delta^{(3)}}>> \mathcal{B}^2
\end{CD}
\]
That is, for all $x, y, z \in \mathcal{A}$,
\begin{align*}
\beta\big([x, y]_\mathcal{A}\big) &= \big[\theta(\bar{x}), \theta(\bar{y})\big]_\mathcal{B}, \\
\beta\big([x, y, z]_\mathcal{A}\big) &= \big[\theta(\bar{x}), \theta(\bar{y}), \theta(\bar{z})\big]_\mathcal{B}.
\end{align*}
Furthermore, we require that $\theta$ and $\beta$ are compatible with the twisting maps:
\begin{equation}\label{eq:twist_compat}
\theta \circ \overline{\alpha}_\mathcal{A} = \overline{\alpha}_\mathcal{B} \circ \theta
\quad \text{and} \quad
\beta \circ \alpha_\mathcal{A}|_{\mathcal{A}^2} = \alpha_\mathcal{B}|_{\mathcal{B}^2} \circ \beta,
\end{equation}
where $\overline{\alpha}_\mathcal{A}$ and $\overline{\alpha}_\mathcal{B}$ are the induced maps on the quotient algebras (see Remark~\ref{rem:quotient_LYA}).

If, in addition, both $\theta$ and $\beta$ are \emph{isomorphisms}, then $(\theta, \beta)$ is called an \emph{isoclinism} between $\mathcal{A}$ and $\mathcal{B}$. In this case, we write $\mathcal{A} \sim \mathcal{B}$ and say that $\mathcal{A}$ and $\mathcal{B}$ are \emph{isoclinic}.
\end{defn}
\begin{lem}\label{lem:isoclinism_with_abelian_extension}
Let $(\mathcal{A}, [\cdot,\cdot]_\mathcal{A}, [\cdot,\cdot,\cdot]_\mathcal{A}, \alpha_\mathcal{A})$ be a regular Hom-Lie Yamaguti algebra and $(\mathcal{B}, \alpha_\mathcal{B})$ an abelian Hom-Lie Yamaguti algebra. Then $\mathcal{A}$ and $\mathcal{A} \oplus \mathcal{B}$ are isoclinic, denoted $\mathcal{A} \sim \mathcal{A} \oplus \mathcal{B}$.
\end{lem}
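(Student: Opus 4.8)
The plan is to exhibit an explicit isoclinism $(\theta,\beta)$ between $\mathcal{A}$ and $\mathcal{A}\oplus\mathcal{B}$. The guiding idea is that adjoining an abelian summand changes neither the derived subalgebra nor the central quotient in any essential way: the new directions in $\mathcal{B}$ die under both operations, so they land entirely in the center of $\mathcal{A}\oplus\mathcal{B}$ and contribute nothing to $(\mathcal{A}\oplus\mathcal{B})^2$.

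First I would pin down the two structural subspaces of $\mathcal{C}:=\mathcal{A}\oplus\mathcal{B}$. Since $\mathcal{B}$ is abelian, both operations on $\mathcal{C}$ vanish whenever any argument lies in $\mathcal{B}$, so $\mathcal{C}^2=[\mathcal{C},\mathcal{C}]+[\mathcal{C},\mathcal{C},\mathcal{C}]=\mathcal{A}^2\oplus\{0\}$, which I identify with $\mathcal{A}^2$. Likewise, $(0,b)$ is central in $\mathcal{C}$ for every $b\in\mathcal{B}$, and $(x,0)$ is central iff $x\in\mathrm{Z}(\mathcal{A})$; hence $\mathrm{Z}(\mathcal{C})=\mathrm{Z}(\mathcal{A})\oplus\mathcal{B}$. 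Consequently there is a natural linear isomorphism
\[
\varphi\colon \mathcal{C}/\mathrm{Z}(\mathcal{C})\;\longrightarrow\;\mathcal{A}/\mathrm{Z}(\mathcal{A}),\qquad \varphi\big(\overline{(x,b)}\big)=\overline{x}.
\]
I then set $\theta=\varphi$ (viewed as a map $\mathcal{A}/\mathrm{Z}(\mathcal{A})\to\mathcal{C}/\mathrm{Z}(\mathcal{C})$ by taking its inverse, since the definition wants $\theta$ going from the first algebra's quotient to the second's), and $\beta=\mathrm{id}_{\mathcal{A}^2}\colon\mathcal{A}^2\to\mathcal{C}^2$ under the identification above.

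Next I would verify the three requirements of Definition~\ref{def:isoclinism_HLYA}. Commutativity of the two diagrams is immediate from the formulas for the operations on a direct sum: for $x,y,z\in\mathcal{A}$, writing the corresponding cosets in $\mathcal{C}/\mathrm{Z}(\mathcal{C})$ as $\overline{(x,0)}$, etc., one has $[\overline{(x,0)},\overline{(y,0)}]_{\mathcal{C}}=\overline{[x,y]_\mathcal{A}}=\beta([x,y]_\mathcal{A})$ and similarly for the ternary bracket, which is exactly the homoclinism identity. Compatibility with the twisting maps follows because $\alpha_\mathcal{C}=\alpha_\mathcal{A}\oplus\alpha_\mathcal{B}$ restricts to $\alpha_\mathcal{A}$ on $\mathcal{A}^2\oplus\{0\}$ and induces $\overline{\alpha}_\mathcal{A}$ through $\varphi$ on the quotient; here I would invoke Remark~\ref{rem:quotient_regularity} to know that the induced maps on the quotients are genuine (invertible) twisting maps, so the framework of Definition~\ref{def:isoclinism_HLYA} applies. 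Finally, $\theta$ is an isomorphism by construction of $\varphi$, and $\beta$ is the identity, hence an isomorphism of the derived subalgebras; therefore $(\theta,\beta)$ is an isoclinism and $\mathcal{A}\sim\mathcal{A}\oplus\mathcal{B}$.

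The only genuinely delicate point is the bookkeeping identifying $\mathrm{Z}(\mathcal{C})$ and $\mathcal{C}^2$ correctly — in particular checking that $\alpha_\mathcal{C}$ preserves these subspaces so that the quotient and restricted twisting maps are well defined (this uses regularity of $\mathcal{A}$, and that $\mathcal{B}$ being abelian forces $\mathrm{Z}(\mathcal{C})$ to be a Hom-ideal via Lemma~\ref{lem:Hom_ideal_center} applied to $\mathcal{C}$). Once those identifications are in hand, everything else is a direct unwinding of the definitions, with no real computation.
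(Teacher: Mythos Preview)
Your proposal is correct and follows essentially the same route as the paper: identify $\mathrm{Z}(\mathcal{A}\oplus\mathcal{B})=\mathrm{Z}(\mathcal{A})\oplus\mathcal{B}$ and $(\mathcal{A}\oplus\mathcal{B})^2=\mathcal{A}^2\oplus\{0\}$, then take $\theta$ to be the obvious isomorphism $\bar{x}\mapsto\overline{(x,0)}$ and $\beta$ the inclusion $a\mapsto(a,0)$, and verify the diagrams and twisting-map compatibilities directly. The paper writes out the checks more explicitly but there is no difference in strategy or in the key identifications.
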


\begin{proof}
Since $\mathcal{B}$ is an abelian Hom-Lie Yamaguti algebra, we have $[b_1, b_2]_\mathcal{B} = 0$ and $[b_1, b_2, b_3]_\mathcal{B} = 0$ for all $b_1, b_2, b_3 \in \mathcal{B}$. Consequently, the center of the direct sum satisfies
\[
\mathrm{Z}(\mathcal{A} \oplus \mathcal{B}) = \mathrm{Z}(\mathcal{A}) \oplus \mathcal{B}.
\]
Indeed, any element $(z, b) \in \mathrm{Z}(\mathcal{A}) \oplus \mathcal{B}$ commutes and associates trivially with all elements in $\mathcal{A} \oplus \mathcal{B}$, and conversely, any central element must project trivially in $\mathcal{A}$ (i.e., into $\mathrm{Z}(\mathcal{A})$) and arbitrarily in $\mathcal{B}$ due to its abelian nature.

Now define a linear map
\[
\theta \colon \frac{\mathcal{A}}{\mathrm{Z}(\mathcal{A})} \to \frac{\mathcal{A} \oplus \mathcal{B}}{\mathrm{Z}(\mathcal{A}) \oplus \mathcal{B}}, \quad \theta(x + \mathrm{Z}(\mathcal{A})) = (x, 0) + (\mathrm{Z}(\mathcal{A}) \oplus \mathcal{B}).
\]
This map is well-defined because if $x - x' \in \mathrm{Z}(\mathcal{A})$, then $(x,0) - (x',0) \in \mathrm{Z}(\mathcal{A}) \oplus \mathcal{B}$. Moreover, $\theta$ is clearly linear and bijective: it is injective because $\ker \theta = \{0\}$, and surjective because every coset in $(\mathcal{A} \oplus \mathcal{B}) / (\mathrm{Z}(\mathcal{A}) \oplus \mathcal{B})$ can be represented by $(x, 0)$ for some $x \in \mathcal{A}$.

We now verify that $\theta$ preserves the algebraic structure. For all $x, y \in \mathcal{A}$,
\begin{align*}
\theta\big([x + \mathrm{Z}(\mathcal{A}), y + \mathrm{Z}(\mathcal{A})]_{\mathcal{A}/\mathrm{Z}(\mathcal{A})}\big)
&= \theta([x, y]_\mathcal{A} + \mathrm{Z}(\mathcal{A})) \\
&= ([x, y]_\mathcal{A}, 0) + (\mathrm{Z}(\mathcal{A}) \oplus \mathcal{B}) \\
&= [(x,0), (y,0)]_{\mathcal{A} \oplus \mathcal{B}} + (\mathrm{Z}(\mathcal{A}) \oplus \mathcal{B}) \\
&= \big[(x,0) + (\mathrm{Z}(\mathcal{A}) \oplus \mathcal{B}),\, (y,0) + (\mathrm{Z}(\mathcal{A}) \oplus \mathcal{B})\big] \\
&= [\theta(x + \mathrm{Z}(\mathcal{A})), \theta(y + \mathrm{Z}(\mathcal{A}))].
\end{align*}
Similarly, for the ternary operation and $x, y, z \in \mathcal{A}$,
\begin{align*}
\theta\big([x + \mathrm{Z}(\mathcal{A}), y + \mathrm{Z}(\mathcal{A}), z + \mathrm{Z}(\mathcal{A})]\big)
&= \theta([x, y, z]_\mathcal{A} + \mathrm{Z}(\mathcal{A})) \\
&= ([x, y, z]_\mathcal{A}, 0) + (\mathrm{Z}(\mathcal{A}) \oplus \mathcal{B}) \\
&= [(x,0), (y,0), (z,0)]_{\mathcal{A} \oplus \mathcal{B}} + (\mathrm{Z}(\mathcal{A}) \oplus \mathcal{B}) \\
&= [\theta(x + \mathrm{Z}(\mathcal{A})), \theta(y + \mathrm{Z}(\mathcal{A})), \theta(z + \mathrm{Z}(\mathcal{A}))].
\end{align*}
Thus, $\theta$ is a homomorphism of quotient Hom-Lie Yamaguti algebras.

Next, we check compatibility with the twisting maps. Let $\overline{\alpha}_\mathcal{A}$ and $\overline{\alpha}_{\mathcal{A} \oplus \mathcal{B}}$ denote the induced maps on the quotients. Then for any $x \in \mathcal{A}$,
\begin{align*}
\theta\big(\overline{\alpha}_\mathcal{A}(x + \mathrm{Z}(\mathcal{A}))\big)
&= \theta(\alpha_\mathcal{A}(x) + \mathrm{Z}(\mathcal{A})) \\
&= (\alpha_\mathcal{A}(x), 0) + (\mathrm{Z}(\mathcal{A}) \oplus \mathcal{B}) \\
&= \overline{\alpha}_{\mathcal{A} \oplus \mathcal{B}}\big((x,0) + (\mathrm{Z}(\mathcal{A}) \oplus \mathcal{B})\big) \\
&= \overline{\alpha}_{\mathcal{A} \oplus \mathcal{B}}(\theta(x + \mathrm{Z}(\mathcal{A}))).
\end{align*}
Hence, $\theta \circ \overline{\alpha}_\mathcal{A} = \overline{\alpha}_{\mathcal{A} \oplus \mathcal{B}} \circ \theta$.

Now consider the identity map on the derived subalgebras:
\[
\beta \colon \mathcal{A}^2 \to (\mathcal{A} \oplus \mathcal{B})^2, \quad \beta(a) = (a, 0),
\]
where $\mathcal{A}^2 = [\mathcal{A}, \mathcal{A}]_\mathcal{A} + [\mathcal{A}, \mathcal{A}, \mathcal{A}]_\mathcal{A}$ and $(\mathcal{A} \oplus \mathcal{B})^2 = [\mathcal{A} \oplus \mathcal{B}, \mathcal{A} \oplus \mathcal{B}] + [\mathcal{A} \oplus \mathcal{B}, \mathcal{A} \oplus \mathcal{B}, \mathcal{A} \oplus \mathcal{B}]$. Since $\mathcal{B}$ is abelian, $(\mathcal{A} \oplus \mathcal{B})^2 = \mathcal{A}^2 \oplus \{0\}$, so $\beta$ is an isomorphism.

We now verify that the following diagrams commute:

\[
\begin{CD}
\frac{\mathcal{A}}{\mathrm{Z}(\mathcal{A})} \times \frac{\mathcal{A}}{\mathrm{Z}(\mathcal{A})} @>{\tau^{(2)}}>> \mathcal{A}^2 \\
@V{\theta \times \theta}VV @V{\beta}VV \\
\frac{\mathcal{A} \oplus \mathcal{B}}{\mathrm{Z}(\mathcal{A}) \oplus \mathcal{B}} \times \frac{\mathcal{A} \oplus \mathcal{B}}{\mathrm{Z}(\mathcal{A}) \oplus \mathcal{B}} @>{\delta^{(2)}}>> (\mathcal{A} \oplus \mathcal{B})^2
\end{CD}
\quad\quad
\begin{CD}
\frac{\mathcal{A}}{\mathrm{Z}(\mathcal{A})} \times \frac{\mathcal{A}}{\mathrm{Z}(\mathcal{A})} \times \frac{\mathcal{A}}{\mathrm{Z}(\mathcal{A})} @>{\tau^{(3)}}>> \mathcal{A}^2 \\
@V{\theta \times \theta \times \theta}VV @V{\beta}VV \\
\frac{\mathcal{A} \oplus \mathcal{B}}{\mathrm{Z}(\mathcal{A}) \oplus \mathcal{B}} \times \frac{\mathcal{A} \oplus \mathcal{B}}{\mathrm{Z}(\mathcal{A}) \oplus \mathcal{B}} \times \frac{\mathcal{A} \oplus \mathcal{B}}{\mathrm{Z}(\mathcal{A}) \oplus \mathcal{B}} @>{\delta^{(3)}}>> (\mathcal{A} \oplus \mathcal{B})^2
\end{CD}
\]

\begin{enumerate}
 \item For the binary case:
\[
\beta(\tau^{(2)}(\bar{x}, \bar{y})) = \beta([x,y]_\mathcal{A}) = ([x,y]_\mathcal{A}, 0) = \delta^{(2)}(\theta(\bar{x}), \theta(\bar{y})).
\]
 \item Similarly for the ternary case:
\[
\beta(\tau^{(3)}(\bar{x}, \bar{y}, \bar{z})) = ([x,y,z]_\mathcal{A}, 0) = \delta^{(3)}(\theta(\bar{x}), \theta(\bar{y}), \theta(\bar{z})).
\]
 \item Finally, $\beta$ is compatible with the twisting maps:
\[
\beta \circ \alpha_\mathcal{A}|_{\mathcal{A}^2} = \alpha_{\mathcal{A} \oplus \mathcal{B}}|_{(\mathcal{A} \oplus \mathcal{B})^2} \circ \beta,
\]
since both sides act as $\alpha_\mathcal{A}$ on $\mathcal{A}^2$ and $\alpha_\mathcal{B}(0) = 0$.
\end{enumerate}
Since $\theta$ and $\beta$ are isomorphisms satisfying all required compatibilities, the pair $(\theta, \beta)$ defines an isoclinism. Therefore, $\mathcal{A} \sim \mathcal{A} \oplus \mathcal{B}$.
\end{proof}
\begin{lem}\label{lem:quotient_isoclinism}
Let $(\mathcal{A}, [\cdot,\cdot]_\mathcal{A}, [\cdot,\cdot,\cdot]_\mathcal{A}, \alpha_\mathcal{A})$ be a regular Hom-Lie Yamaguti algebra, and let $\mathcal{B}$ be a Hom-ideal of $\mathcal{A}$. Then
\[
\frac{\mathcal{A}}{\mathcal{B}} \sim \frac{\mathcal{A}}{\mathcal{B} \cap \mathcal{A}^2}.
\]
If $\mathcal{B} \cap \mathcal{A}^2 = 0$, then $\mathcal{A} \sim \mathcal{A}/\mathcal{B}$. Conversely, if $\mathcal{A}^2$ is finite-dimensional and $\mathcal{A} \sim \mathcal{A}/\mathcal{B}$, then $\mathcal{B} \cap \mathcal{A}^2 = 0$.
\end{lem}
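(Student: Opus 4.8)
The plan is to establish the three assertions in the order: the middle one, \emph{$\mathcal{B}\cap\mathcal{A}^2=0\Rightarrow\mathcal{A}\sim\mathcal{A}/\mathcal{B}$}, then the displayed isoclinism $\mathcal{A}/\mathcal{B}\sim\mathcal{A}/(\mathcal{B}\cap\mathcal{A}^2)$ as a consequence of it, and finally the converse by a dimension count. For the middle assertion I would first observe that $\mathcal{B}\cap\mathcal{A}^2=0$ forces $\mathcal{B}\subseteq\mathrm{Z}(\mathcal{A})$: for $b\in\mathcal{B}$ and any $a,a_1,a_2\in\mathcal{A}$, each of $[b,a]$, $[b,a_1,a_2]$, $[a_1,a_2,b]$ lies in the Hom-ideal $\mathcal{B}$ and in $\mathcal{A}^2$, hence vanishes. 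Thus $\mathcal{A}/\mathcal{B}$ makes sense and is regular by Remark~\ref{rem:quotient_regularity}, and the same computation carried out one level up gives $\mathrm{Z}(\mathcal{A}/\mathcal{B})=\mathrm{Z}(\mathcal{A})/\mathcal{B}$: a coset $x+\mathcal{B}$ is central exactly when $[x,a]$, $[x,a_1,a_2]$, $[a_1,a_2,x]$ all land in $\mathcal{B}\cap\mathcal{A}^2=0$, i.e.\ when $x\in\mathrm{Z}(\mathcal{A})$.

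Granting that, I would take $\theta\colon\mathcal{A}/\mathrm{Z}(\mathcal{A})\to(\mathcal{A}/\mathcal{B})/\mathrm{Z}(\mathcal{A}/\mathcal{B})$ to be the canonical map $x+\mathrm{Z}(\mathcal{A})\mapsto(x+\mathcal{B})+\mathrm{Z}(\mathcal{A}/\mathcal{B})$, which is an isomorphism by the identification above together with the third isomorphism theorem, and $\beta\colon\mathcal{A}^2\to(\mathcal{A}/\mathcal{B})^2=(\mathcal{A}^2+\mathcal{B})/\mathcal{B}$ to be $u\mapsto u+\mathcal{B}$, which is an isomorphism since $\ker\beta=\mathcal{A}^2\cap\mathcal{B}=0$ and $\mathrm{im}\,\beta=(\mathcal{A}^2+\mathcal{B})/\mathcal{B}$. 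What then remains is the routine check that $(\theta,\beta)$ is a homoclinism in the sense of Definition~\ref{def:isoclinism_HLYA}: commutativity of the binary and ternary squares amounts to the identities $[x,y]_\mathcal{A}+\mathcal{B}=[x+\mathcal{B},y+\mathcal{B}]_{\mathcal{A}/\mathcal{B}}$ and $[x,y,z]_\mathcal{A}+\mathcal{B}=[x+\mathcal{B},y+\mathcal{B},z+\mathcal{B}]_{\mathcal{A}/\mathcal{B}}$, and compatibility with the twisting maps is immediate from the description of $\overline{\alpha}_{\mathcal{A}/\mathcal{B}}$ in Remark~\ref{rem:quotient_LYA}. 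Since $\theta$ and $\beta$ are isomorphisms, this yields $\mathcal{A}\sim\mathcal{A}/\mathcal{B}$.

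For the isoclinism $\mathcal{A}/\mathcal{B}\sim\mathcal{A}/(\mathcal{B}\cap\mathcal{A}^2)$ I would set $\mathcal{C}=\mathcal{B}\cap\mathcal{A}^2$, a Hom-ideal of $\mathcal{A}$ contained in $\mathcal{B}$, and work inside the regular algebra $\mathcal{A}/\mathcal{C}$, where $\mathcal{B}/\mathcal{C}$ is a Hom-ideal. Because $\mathcal{C}\subseteq\mathcal{A}^2$ we have $(\mathcal{A}/\mathcal{C})^2=(\mathcal{A}^2+\mathcal{C})/\mathcal{C}=\mathcal{A}^2/\mathcal{C}$, and a coset $b+\mathcal{C}$ with $b\in\mathcal{B}$ lies in $\mathcal{A}^2/\mathcal{C}$ iff $b\in\mathcal{A}^2$, i.e.\ iff $b\in\mathcal{B}\cap\mathcal{A}^2=\mathcal{C}$; hence $(\mathcal{B}/\mathcal{C})\cap(\mathcal{A}/\mathcal{C})^2=0$. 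Applying the middle assertion to $\mathcal{A}/\mathcal{C}$ with the Hom-ideal $\mathcal{B}/\mathcal{C}$ gives $\mathcal{A}/\mathcal{C}\sim(\mathcal{A}/\mathcal{C})/(\mathcal{B}/\mathcal{C})\cong\mathcal{A}/\mathcal{B}$; since an isomorphism of Hom-Lie Yamaguti algebras induces an isoclinism (it restricts to the derived subalgebras and descends to the central quotients) and $\sim$ is transitive, $\mathcal{A}/\mathcal{B}\sim\mathcal{A}/\mathcal{C}=\mathcal{A}/(\mathcal{B}\cap\mathcal{A}^2)$. The special case $\mathcal{C}=0$ re-proves the second assertion.

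For the converse, suppose $\mathcal{A}^2$ is finite-dimensional and $\mathcal{A}\sim\mathcal{A}/\mathcal{B}$; by Definition~\ref{def:isoclinism_HLYA} there is then an isomorphism $\mathcal{A}^2\to(\mathcal{A}/\mathcal{B})^2$, so $\dim(\mathcal{A}/\mathcal{B})^2=\dim\mathcal{A}^2<\infty$. But $(\mathcal{A}/\mathcal{B})^2=(\mathcal{A}^2+\mathcal{B})/\mathcal{B}$ is the image of $\mathcal{A}^2$ under the quotient map $\mathcal{A}\to\mathcal{A}/\mathcal{B}$, whose restriction to $\mathcal{A}^2$ has kernel $\mathcal{A}^2\cap\mathcal{B}$, so $\dim(\mathcal{A}/\mathcal{B})^2=\dim\mathcal{A}^2-\dim(\mathcal{A}^2\cap\mathcal{B})$; comparing forces $\dim(\mathcal{A}^2\cap\mathcal{B})=0$, hence $\mathcal{A}^2\cap\mathcal{B}=0$. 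I expect the only genuine obstacle to be the middle assertion, concretely the identification $\mathrm{Z}(\mathcal{A}/\mathcal{B})=\mathrm{Z}(\mathcal{A})/\mathcal{B}$: this is exactly where the hypothesis $\mathcal{B}\cap\mathcal{A}^2=0$ and, through Remark~\ref{rem:quotient_regularity}, regularity are used in an essential way, whereas the remaining verifications are diagram chases and the converse is a one-line dimension count.
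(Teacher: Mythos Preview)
Your proposal is correct but organized differently from the paper. You establish the special case $\mathcal{B}\cap\mathcal{A}^2=0\Rightarrow\mathcal{A}\sim\mathcal{A}/\mathcal{B}$ first, using the crisp observation that the hypothesis forces $\mathcal{B}\subseteq\mathrm{Z}(\mathcal{A})$ and hence $\mathrm{Z}(\mathcal{A}/\mathcal{B})=\mathrm{Z}(\mathcal{A})/\mathcal{B}$, and then bootstrap the general isoclinism $\mathcal{A}/\mathcal{B}\sim\mathcal{A}/(\mathcal{B}\cap\mathcal{A}^2)$ by applying the special case inside $\mathcal{A}/\mathcal{C}$ with $\mathcal{C}=\mathcal{B}\cap\mathcal{A}^2$. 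The paper instead attacks the general statement head-on: it writes down explicit maps $\theta$ between the two central quotients $(\mathcal{A}/\mathcal{B})/\mathrm{Z}(\mathcal{A}/\mathcal{B})$ and $(\mathcal{A}/\mathcal{C})/\mathrm{Z}(\mathcal{A}/\mathcal{C})$ and $\beta$ between the derived subalgebras, then checks well-definedness, bijectivity, and the diagram and twisting compatibilities directly; the special case is deduced afterwards. Your route is more modular and makes the center identification transparent, at the cost of invoking transitivity of $\sim$ and the implication ``isomorphism $\Rightarrow$ isoclinism'' as black boxes; the paper's direct construction stays closer to the definition and avoids those auxiliary facts but requires a more delicate comparison of the centers of the two quotients. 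The converse is handled identically in both by a dimension count on the surjection $\mathcal{A}^2\to(\mathcal{A}/\mathcal{B})^2$ with kernel $\mathcal{B}\cap\mathcal{A}^2$.
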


\begin{proof}
Let $\overline{\mathcal{A}} = \mathcal{A}/\mathcal{B}$ and $\widetilde{\mathcal{A}} = \mathcal{A}/(\mathcal{B} \cap \mathcal{A}^2)$. Since $\mathcal{B}$ and $\mathcal{B} \cap \mathcal{A}^2$ are Hom-ideals (the latter being an intersection of a Hom-ideal and a Hom-subalgebra invariant under $\alpha_\mathcal{A}$), both $\overline{\mathcal{A}}$ and $\widetilde{\mathcal{A}}$ inherit the structure of regular Hom-Lie Yamaguti algebras (see Remark~\ref{rem:quotient_LYA}).

We define a map
\[
\theta \colon \frac{\overline{\mathcal{A}}}{\mathrm{Z}(\overline{\mathcal{A}})} \to \frac{\widetilde{\mathcal{A}}}{\mathrm{Z}(\widetilde{\mathcal{A}})}, \quad \theta(\overline{x} + \mathrm{Z}(\overline{\mathcal{A}})) = \widetilde{x} + \mathrm{Z}(\widetilde{\mathcal{A}}),
\]
where $\overline{x} = x + \mathcal{B} \in \overline{\mathcal{A}}$, $\widetilde{x} = x + (\mathcal{B} \cap \mathcal{A}^2) \in \widetilde{\mathcal{A}}$. This map is well-defined: if $\overline{x} - \overline{y} \in \mathrm{Z}(\overline{\mathcal{A}})$, then $[\overline{x} - \overline{y}, \overline{a}] = 0$ and $[\overline{x} - \overline{y}, \overline{a}, \overline{b}] = 0$ for all $\overline{a}, \overline{b} \in \overline{\mathcal{A}}$, which implies that the corresponding cosets in $\widetilde{\mathcal{A}}$ also commute and associate trivially modulo $\mathrm{Z}(\widetilde{\mathcal{A}})$, due to the canonical epimorphism $\widetilde{\mathcal{A}} \to \overline{\mathcal{A}}$ induced by the inclusion $\mathcal{B} \cap \mathcal{A}^2 \subseteq \mathcal{B}$.

Moreover, $\theta$ is an isomorphism. It is injective because if $\widetilde{x} \in \mathrm{Z}(\widetilde{\mathcal{A}})$, then $[x, a] \in \mathcal{B} \cap \mathcal{A}^2$ and $[x, a, b] \in \mathcal{B} \cap \mathcal{A}^2$ for all $a, b \in \mathcal{A}$, and since $\mathcal{B} \cap \mathcal{A}^2 \subseteq \mathcal{B}$, it follows that $[x, a] \in \mathcal{B}$ and $[x, a, b] \in \mathcal{B}$, so $\overline{x} \in \mathrm{Z}(\overline{\mathcal{A}})$. Surjectivity follows from the fact that every element of $\widetilde{\mathcal{A}}$ is of the form $\widetilde{x}$, and hence its image under $\theta$ covers all cosets.

We now verify compatibility with the twisting maps. Let $\overline{\alpha}_\mathcal{A}$ and $\widetilde{\alpha}_\mathcal{A}$ denote the induced maps on $\overline{\mathcal{A}}$ and $\widetilde{\mathcal{A}}$, respectively, and let $\alpha_{\overline{\mathcal{A}}}, \alpha_{\widetilde{\mathcal{A}}}$ be their further inductions on the quotients by the center. Then:
\begin{align*}
\theta\left( \alpha_{\bar{\mathcal{A}}}(\bar{x} + \mathrm{Z}(\overline{\mathcal{A}})) \right)
&= \theta\left( \overline{\alpha_\mathcal{A}(x)} + \mathrm{Z}(\overline{\mathcal{A}}) \right) \\
&= \widetilde{\alpha_\mathcal{A}(x)} + \mathrm{Z}(\widetilde{\mathcal{A}}) \\
&= \alpha_{\widetilde{\mathcal{A}}}(\widetilde{x} + \mathrm{Z}(\widetilde{\mathcal{A}})) \\
&= \alpha_{\widetilde{\mathcal{A}}} \left( \theta(\overline{x} + \mathrm{Z}(\overline{\mathcal{A}})) \right).
\end{align*}
Thus, $\theta \circ \alpha_{\overline{\mathcal{A}}} = \alpha_{\widetilde{\mathcal{A}}} \circ \theta$.

Next, consider the derived subalgebras:
\[
\overline{\mathcal{A}}^2 = [\overline{\mathcal{A}}, \overline{\mathcal{A}}] + [\overline{\mathcal{A}}, \overline{\mathcal{A}}, \overline{\mathcal{A}}], \quad
\widetilde{\mathcal{A}}^2 = [\widetilde{\mathcal{A}}, \widetilde{\mathcal{A}}] + [\widetilde{\mathcal{A}}, \widetilde{\mathcal{A}}, \widetilde{\mathcal{A}}].
\]
The canonical projection $\pi \colon \widetilde{\mathcal{A}} \to \overline{\mathcal{A}}$ induces a surjective homomorphism $\beta \colon \widetilde{\mathcal{A}}^2 \to \overline{\mathcal{A}}^2$. Since $\ker \pi = \mathcal{B}/(\mathcal{B} \cap \mathcal{A}^2)$ and $\mathcal{B} \cap \mathcal{A}^2$ annihilates $\widetilde{\mathcal{A}}^2$, it follows that $\beta$ is also injective, hence an isomorphism.

To verify the compatibility of $\beta$ with the twisting maps:
\[
\beta \circ \widetilde{\alpha}_\mathcal{A}|_{\widetilde{\mathcal{A}}^2} = \overline{\alpha}_\mathcal{A}|_{\overline{\mathcal{A}}^2} \circ \beta.
\]
Indeed, for any generator $[x,y] \in \widetilde{\mathcal{A}}^2$,
\[
\beta(\widetilde{\alpha}_\mathcal{A}([x,y])) = \beta([\alpha_\mathcal{A}(x), \alpha_\mathcal{A}(y)]) = [\alpha_\mathcal{A}(x), \alpha_\mathcal{A}(y)]_{\overline{\mathcal{A}}} = \overline{\alpha}_\mathcal{A}([x,y]_{\overline{\mathcal{A}}}) = \overline{\alpha}_\mathcal{A}(\beta([x,y])).
\]
The same holds for the ternary operation. Thus, the pair $(\theta, \beta)$ defines an isoclinism, so $\mathcal{A}/\mathcal{B} \sim \mathcal{A}/(\mathcal{B} \cap \mathcal{A}^2)$.

Now suppose $\mathcal{B} \cap \mathcal{A}^2 = 0$. Then the canonical map $\mathcal{A} \to \mathcal{A}/\mathcal{B}$ induces an isomorphism on the derived subalgebras $\mathcal{A}^2 \to (\mathcal{A}/\mathcal{B})^2$, and since $\mathrm{Z}(\mathcal{A}) \subseteq \mathrm{Z}(\mathcal{A}/\mathcal{B})$ in a compatible way, we can define an isoclinism $(\theta, \beta)$ similarly, proving $\mathcal{A} \sim \mathcal{A}/\mathcal{B}$.

Conversely, suppose $\mathcal{A}^2$ is finite-dimensional and $\mathcal{A} \sim \mathcal{A}/\mathcal{B}$. Then there exist isomorphisms $\theta \colon \mathcal{A}/\mathrm{Z}(\mathcal{A}) \to (\mathcal{A}/\mathcal{B})/\mathrm{Z}(\mathcal{A}/\mathcal{B})$ and $\beta \colon \mathcal{A}^2 \to (\mathcal{A}/\mathcal{B})^2$. Since the natural projection $\mathcal{A}^2 \to (\mathcal{A}/\mathcal{B})^2$ is surjective with kernel $\mathcal{B} \cap \mathcal{A}^2$, and $\beta$ is an isomorphism, we must have $\mathcal{B} \cap \mathcal{A}^2 = \ker(\mathcal{A}^2 \to (\mathcal{A}/\mathcal{B})^2) = 0$.

This completes the proof.
\end{proof}

\begin{lem}\label{lem:isoclinism_via_homomorphism}
Let $(\mathcal{A}, [\cdot,\cdot]_\mathcal{A}, [\cdot,\cdot,\cdot]_\mathcal{A}, \alpha_\mathcal{A})$ and $(\mathcal{B}, [\cdot,\cdot]_\mathcal{B}, [\cdot,\cdot,\cdot]_\mathcal{B}, \alpha_\mathcal{B})$ be two regular Hom-Lie Yamaguti algebras. Suppose there exists a surjective homomorphism $f \colon \mathcal{A} \to \mathcal{B}$ such that $\ker(f) \cap \mathcal{A}^2 = 0$. Then $\mathcal{A} \sim \mathcal{B}$.
\end{lem}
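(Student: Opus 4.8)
The plan is to produce the isoclinism pair $(\theta,\beta)$ directly from the surjection $f$, invoking the hypothesis $\ker(f)\cap\mathcal{A}^2=0$ only at the single point where it is genuinely needed. (Alternatively one can shortcut: with $\mathcal{I}:=\ker(f)$ one checks $\mathcal{A}/\mathcal{I}\cong\mathcal{B}$ as regular Hom-Lie Yamaguti algebras via $x+\mathcal{I}\mapsto f(x)$, and since $\mathcal{I}\cap\mathcal{A}^2=0$, Lemma~\ref{lem:quotient_isoclinism} gives $\mathcal{A}\sim\mathcal{A}/\mathcal{I}$; combining with the fact that an isomorphism is in particular an isoclinism and that $\sim$ is transitive yields $\mathcal{A}\sim\mathcal{B}$. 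I will spell out the direct construction, since it exhibits the pair explicitly.)

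First I would set $\beta:=f|_{\mathcal{A}^2}\colon\mathcal{A}^2\to\mathcal{B}^2$. Because $f$ is a surjective homomorphism, $f([\mathcal{A},\mathcal{A}]_\mathcal{A})=[\mathcal{B},\mathcal{B}]_\mathcal{B}$ and $f([\mathcal{A},\mathcal{A},\mathcal{A}]_\mathcal{A})=[\mathcal{B},\mathcal{B},\mathcal{B}]_\mathcal{B}$, so $\beta(\mathcal{A}^2)=\mathcal{B}^2$; moreover $\ker(\beta)=\ker(f)\cap\mathcal{A}^2=0$, so $\beta$ is an isomorphism of the derived subalgebras, and $\beta\circ\alpha_\mathcal{A}|_{\mathcal{A}^2}=\alpha_\mathcal{B}|_{\mathcal{B}^2}\circ\beta$ is immediate from $f\circ\alpha_\mathcal{A}=\alpha_\mathcal{B}\circ f$. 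Next I would check $f(\mathrm{Z}(\mathcal{A}))\subseteq\mathrm{Z}(\mathcal{B})$: if $z\in\mathrm{Z}(\mathcal{A})$ and $b\in\mathcal{B}$, write $b=f(a)$; then $[f(z),b]_\mathcal{B}=f([z,a]_\mathcal{A})=0$, and likewise $[f(z),f(a),f(a')]_\mathcal{B}=0=[f(a),f(a'),f(z)]_\mathcal{B}$, so $f(z)\in\mathrm{Z}(\mathcal{B})$. This makes
\[
\theta\colon \frac{\mathcal{A}}{\mathrm{Z}(\mathcal{A})}\to\frac{\mathcal{B}}{\mathrm{Z}(\mathcal{B})},\qquad \theta\big(x+\mathrm{Z}(\mathcal{A})\big)=f(x)+\mathrm{Z}(\mathcal{B}),
\]
a well-defined linear map that is a homomorphism of quotient Hom-Lie Yamaguti algebras (since $\theta(\overline{[x,y]_\mathcal{A}})=[\theta\bar x,\theta\bar y]_\mathcal{B}$, and similarly for the ternary bracket), it is surjective because $f$ is, and $\theta\circ\overline{\alpha}_\mathcal{A}=\overline{\alpha}_\mathcal{B}\circ\theta$ by passing $f\circ\alpha_\mathcal{A}=\alpha_\mathcal{B}\circ f$ to the quotients as in Remark~\ref{rem:quotient_LYA}.

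The crucial step is the injectivity of $\theta$, and this is exactly where the hypothesis is used. Suppose $f(x)\in\mathrm{Z}(\mathcal{B})$. Then for every $a\in\mathcal{A}$, $f([x,a]_\mathcal{A})=[f(x),f(a)]_\mathcal{B}=0$, so $[x,a]_\mathcal{A}\in\ker(f)$; but $[x,a]_\mathcal{A}\in\mathcal{A}^2$, hence $[x,a]_\mathcal{A}\in\ker(f)\cap\mathcal{A}^2=0$. Running the same argument with $[x,a,a']_\mathcal{A}$ and $[a,a',x]_\mathcal{A}$ shows $x\in\mathrm{Z}(\mathcal{A})$, so $\theta$ is injective, hence an isomorphism. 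It then remains to verify the two commuting squares of Definition~\ref{def:isoclinism_HLYA}, which is routine: $\beta(\tau^{(2)}(\bar x,\bar y))=f([x,y]_\mathcal{A})=[\theta(\bar x),\theta(\bar y)]_\mathcal{B}=\delta^{(2)}(\theta(\bar x),\theta(\bar y))$, and the analogous identity for $\tau^{(3)},\delta^{(3)}$. Therefore $(\theta,\beta)$ is an isoclinism and $\mathcal{A}\sim\mathcal{B}$.

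I expect the only genuine obstacle to be the injectivity of $\theta$: that $f(\mathrm{Z}(\mathcal{A}))\subseteq\mathrm{Z}(\mathcal{B})$ and that $\theta$ is surjective are formal consequences of $f$ being a surjective homomorphism, whereas the reverse containment $f^{-1}(\mathrm{Z}(\mathcal{B}))\subseteq\mathrm{Z}(\mathcal{A})$ — equivalently, injectivity of $\theta$ — genuinely needs $\ker(f)\cap\mathcal{A}^2=0$ (note that it forces $\ker(f)\subseteq\mathrm{Z}(\mathcal{A})$), and it must be checked separately against each of the three defining conditions of the center. Everything else is bookkeeping with the compatibility identity $f\circ\alpha_\mathcal{A}=\alpha_\mathcal{B}\circ f$ and the definitions of $\tau^{(\bullet)},\delta^{(\bullet)}$.
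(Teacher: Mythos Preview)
Your proposal is correct. The parenthetical ``shortcut'' you mention is in fact exactly the paper's proof: it observes that $\ker(f)$ is a Hom-ideal, invokes Lemma~\ref{lem:quotient_isoclinism} with $\mathcal{B}=\ker(f)$ to get $\mathcal{A}\sim\mathcal{A}/\ker(f)$, applies the First Isomorphism Theorem for $\mathcal{A}/\ker(f)\cong\mathcal{B}$, and concludes by transitivity of $\sim$.

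Your main argument takes a genuinely different, more hands-on route by constructing the isoclinism pair $(\theta,\beta)$ explicitly from $f$. This buys you something the paper's proof does not: an explicit description of the isoclinism (namely $\beta=f|_{\mathcal{A}^2}$ and $\theta$ the map induced by $f$ on central quotients), and a transparent identification of exactly where the hypothesis $\ker(f)\cap\mathcal{A}^2=0$ enters --- once for the injectivity of $\beta$, and once for the injectivity of $\theta$ via $f^{-1}(\mathrm{Z}(\mathcal{B}))\subseteq\mathrm{Z}(\mathcal{A})$. The paper's approach is shorter because it offloads the work to Lemma~\ref{lem:quotient_isoclinism}, but yours is self-contained and arguably more informative.
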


\begin{proof}
Since $f \colon \mathcal{A} \to \mathcal{B}$ is a homomorphism of Hom-Lie Yamaguti algebras, its kernel $\ker(f)$ is a Hom-ideal of $\mathcal{A}$.

By assumption, $\ker(f) \cap \mathcal{A}^2 = 0$, where $\mathcal{A}^2 = [\mathcal{A}, \mathcal{A}]_\mathcal{A} + [\mathcal{A}, \mathcal{A}, \mathcal{A}]_\mathcal{A}$ is the derived subalgebra of $\mathcal{A}$. Applying Lemma~\ref{lem:quotient_isoclinism}, we conclude that
$\mathcal{A} \sim \frac{\mathcal{A}}{\ker(f)}.$

Furthermore, since $f$ is surjective, the First Isomorphism Theorem for Hom-Lie Yamaguti algebras 
implies that the induced map $\overline{f} \colon \frac{\mathcal{A}}{\ker(f)} \to \mathcal{B}, \quad \overline{f}(x + \ker(f)) = f(x),$ is an isomorphism of regular Hom-Lie Yamaguti algebras. Since isomorphism implies isoclinism 
, we have $\frac{\mathcal{A}}{\ker(f)} \sim \mathcal{B}.$
Isoclinism is an equivalence relation (a fact that can be verified directly: it is reflexive, symmetric, and transitive under the appropriate compatibility conditions on $\theta$ and $\beta$). Therefore, from $\mathcal{A} \sim \mathcal{A}/\ker(f)$ and $\mathcal{A}/\ker(f) \sim \mathcal{B}$, it follows that $\mathcal{A} \sim \mathcal{B}.$
This completes the proof.
\end{proof}
\begin{lem}\label{lem:stem_in_family}
Let $\mathcal{C}$ be an isoclinism family of regular Hom-Lie Yamaguti algebras. Then the following hold:
\begin{enumerate} 
 \item $\mathcal{C}$ contains at least one stem Hom-Lie Yamaguti algebra;
 \item A finite-dimensional regular Hom-Lie Yamaguti algebra $\mathcal{T} \in \mathcal{C}$ is a stem algebra if and only if $\dim(\mathcal{T})$ is minimal among all algebras in $\mathcal{C}$.
\end{enumerate}
\end{lem}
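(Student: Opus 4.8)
The plan is to settle part~(1) by realizing a stem algebra as a central quotient of an arbitrary member of $\mathcal{C}$, and then to deduce part~(2) from part~(1) together with the observation that an isoclinism identifies the ``central part of the derived subalgebra''. Throughout I would use the following: isoclinism is an equivalence relation (Section~\ref{sec:isoclinism}); $\mathrm{Z}(\mathcal{A})$ is a Hom-ideal (Lemma~\ref{lem:Hom_ideal_center}); the criterion ``$\mathcal{B}\cap\mathcal{A}^2=0\Rightarrow\mathcal{A}\sim\mathcal{A}/\mathcal{B}$'' from Lemma~\ref{lem:quotient_isoclinism}; and the description of quotients in Remarks~\ref{rem:quotient_LYA}--\ref{rem:quotient_regularity}.

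\textbf{Part (1).} Fix $\mathcal{A}\in\mathcal{C}$. Since $\mathcal{A}^2$ and $\mathrm{Z}(\mathcal{A})$ are $\alpha_\mathcal{A}$-invariant, so is $\mathrm{Z}(\mathcal{A})\cap\mathcal{A}^2$, and I would choose an $\alpha_\mathcal{A}$-invariant complement $\mathcal{V}$, so that $\mathrm{Z}(\mathcal{A})=\big(\mathrm{Z}(\mathcal{A})\cap\mathcal{A}^2\big)\oplus\mathcal{V}$. As $\mathcal{V}\subseteq\mathrm{Z}(\mathcal{A})$, the bracket inclusions for a Hom-ideal hold automatically, so $\mathcal{V}$ is a Hom-ideal with $\mathcal{V}\cap\mathcal{A}^2=0$, and Lemma~\ref{lem:quotient_isoclinism} gives $\mathcal{A}\sim\mathcal{T}$ where $\mathcal{T}:=\mathcal{A}/\mathcal{V}\in\mathcal{C}$. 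To check $\mathcal{T}$ is a stem algebra, note first that $\mathrm{Z}(\mathcal{T})=\mathrm{Z}(\mathcal{A})/\mathcal{V}$: if $\overline{x}\in\mathrm{Z}(\mathcal{T})$ then $[x,a]_\mathcal{A},[x,a,b]_\mathcal{A},[a,b,x]_\mathcal{A}\in\mathcal{V}$ for all $a,b\in\mathcal{A}$, but these lie in $\mathcal{A}^2$, hence in $\mathcal{V}\cap\mathcal{A}^2=0$, so $x\in\mathrm{Z}(\mathcal{A})$ (the reverse inclusion is clear). Since $\mathcal{T}^2=(\mathcal{A}^2+\mathcal{V})/\mathcal{V}$, the modular law and $\mathrm{Z}(\mathcal{A})=\mathcal{V}\oplus(\mathrm{Z}(\mathcal{A})\cap\mathcal{A}^2)$ give $\mathrm{Z}(\mathcal{T})\cap\mathcal{T}^2=\big((\mathrm{Z}(\mathcal{A})\cap\mathcal{A}^2)+\mathcal{V}\big)/\mathcal{V}=\mathrm{Z}(\mathcal{A})/\mathcal{V}=\mathrm{Z}(\mathcal{T})$, i.e.\ $\mathrm{Z}(\mathcal{T})\subseteq\mathcal{T}^2$; moreover $\dim\mathcal{T}=\dim\mathcal{A}-\dim\mathcal{V}$ is finite when $\dim\mathcal{A}$ is.

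\textbf{Part (2).} The key preliminary is that an isoclinism $(\theta,\beta)\colon\mathcal{A}\sim\mathcal{B}$ restricts to an isomorphism $\mathrm{Z}(\mathcal{A})\cap\mathcal{A}^2\xrightarrow{\ \sim\ }\mathrm{Z}(\mathcal{B})\cap\mathcal{B}^2$. To see this, let $p_\mathcal{A}\colon\mathcal{A}^2\to\mathcal{A}/\mathrm{Z}(\mathcal{A})$ and $p_\mathcal{B}\colon\mathcal{B}^2\to\mathcal{B}/\mathrm{Z}(\mathcal{B})$ be the restrictions of the canonical projections, with kernels $\mathrm{Z}(\mathcal{A})\cap\mathcal{A}^2$ and $\mathrm{Z}(\mathcal{B})\cap\mathcal{B}^2$; evaluating on generators $[x,y]_\mathcal{A},[x,y,z]_\mathcal{A}$ and using the commuting commutator diagrams of Definition~\ref{def:isoclinism_HLYA} together with $\theta$ being a homomorphism of the quotient algebras, one obtains $p_\mathcal{B}\circ\beta=\theta\circ p_\mathcal{A}$, so $\beta$ carries $\ker p_\mathcal{A}$ into $\ker p_\mathcal{B}$, and the same applied to $(\theta^{-1},\beta^{-1})\colon\mathcal{B}\sim\mathcal{A}$ gives the reverse inclusion. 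Now if $\mathcal{T}\in\mathcal{C}$ is a stem algebra and $\mathcal{A}'\in\mathcal{C}$ is arbitrary, then $\dim\mathcal{T}/\mathrm{Z}(\mathcal{T})=\dim\mathcal{A}'/\mathrm{Z}(\mathcal{A}')$ (via $\theta$) and $\dim\mathrm{Z}(\mathcal{T})=\dim(\mathrm{Z}(\mathcal{T})\cap\mathcal{T}^2)=\dim(\mathrm{Z}(\mathcal{A}')\cap\mathcal{A}'^2)\le\dim\mathrm{Z}(\mathcal{A}')$ (using $\mathrm{Z}(\mathcal{T})\subseteq\mathcal{T}^2$ and the preliminary), so $\dim\mathcal{T}\le\dim\mathcal{A}'$; thus a stem algebra has minimal dimension. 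Conversely, let $\mathcal{T}'\in\mathcal{C}$ be finite-dimensional with $\dim\mathcal{T}'$ minimal, and let $\mathcal{T}\in\mathcal{C}$ be a finite-dimensional stem algebra as produced in~(1); then $\dim\mathcal{T}\le\dim\mathcal{T}'$ by the previous sentence and $\dim\mathcal{T}'\le\dim\mathcal{T}$ by minimality, so $\dim\mathcal{T}=\dim\mathcal{T}'$. Writing $q=\dim\mathcal{T}/\mathrm{Z}(\mathcal{T})=\dim\mathcal{T}'/\mathrm{Z}(\mathcal{T}')$ and $w=\dim(\mathrm{Z}(\mathcal{T})\cap\mathcal{T}^2)=\dim(\mathrm{Z}(\mathcal{T}')\cap\mathcal{T}'^2)$, and using that $\mathcal{T}$ is stem, $q+w=\dim\mathcal{T}=\dim\mathcal{T}'=q+\dim\mathrm{Z}(\mathcal{T}')$, whence $\dim\mathrm{Z}(\mathcal{T}')=w$ and therefore $\mathrm{Z}(\mathcal{T}')=\mathrm{Z}(\mathcal{T}')\cap\mathcal{T}'^2\subseteq\mathcal{T}'^2$, i.e.\ $\mathcal{T}'$ is a stem algebra.

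\textbf{Main obstacle.} The delicate point is the first step of~(1): guaranteeing that the complement $\mathcal{V}$ of $\mathrm{Z}(\mathcal{A})\cap\mathcal{A}^2$ inside $\mathrm{Z}(\mathcal{A})$ can be chosen $\alpha_\mathcal{A}$-invariant, which is precisely what makes $\mathcal{V}$ a Hom-ideal. A vector-space complement always exists, but an $\alpha_\mathcal{A}$-invariant one need not, since the short exact sequence of $\mathbb{K}[\alpha_\mathcal{A},\alpha_\mathcal{A}^{-1}]$-modules $0\to\mathrm{Z}(\mathcal{A})\cap\mathcal{A}^2\to\mathrm{Z}(\mathcal{A})\to\mathrm{Z}(\mathcal{A})/(\mathrm{Z}(\mathcal{A})\cap\mathcal{A}^2)\to 0$ can fail to split (already when $\alpha_\mathcal{A}$ restricted to $\mathrm{Z}(\mathcal{A})$ carries a nontrivial Jordan block meeting $\mathcal{A}^2$). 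I see two remedies: either impose that $\alpha_\mathcal{A}$ is semisimple, where an invariant complement is available; or, in full generality, replace the quotient construction by a direct assembly of a minimal central extension from the isoclinism data $\big(\mathcal{A}/\mathrm{Z}(\mathcal{A}),\,\mathcal{A}^2,\,\tau^{(2)},\,\tau^{(3)},\,\overline{\alpha}_\mathcal{A},\,\alpha_\mathcal{A}|_{\mathcal{A}^2}\big)$ — essentially the factor-set construction anticipated in Lemma~\ref{lem:admits_factor_set} — which yields a stem algebra of dimension at most $\dim\mathcal{A}/\mathrm{Z}(\mathcal{A})+\dim\mathcal{A}^2$ without quotienting. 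Once some stem algebra in $\mathcal{C}$ is secured, the remaining bookkeeping in~(2) is routine.
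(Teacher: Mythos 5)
The paper does not actually prove this lemma — it is stated and the reader is referred to \cite{N2025}, which treats the case $\alpha_\mathcal{A}=\mathrm{id}$. Your argument is precisely the standard route that that reference (and the paper's own later Theorem~\ref{thm:decomposition_in_isoclinism_family}) relies on: pass to $\mathcal{A}/\mathcal{V}$ for a complement $\mathcal{V}$ of $\mathrm{Z}(\mathcal{A})\cap\mathcal{A}^2$ in $\mathrm{Z}(\mathcal{A})$ and invoke Lemma~\ref{lem:quotient_isoclinism}, then do dimension bookkeeping. Your part~(2) is correct and complete: the identity $p_\mathcal{B}\circ\beta=\theta\circ p_\mathcal{A}$ on $\mathcal{A}^2$ (which is the content of Lemma~\ref{lem:isoclinism_properties}(1) in the paper, combined with the commuting diagrams of Definition~\ref{def:isoclinism_HLYA}) does yield that $\beta$ restricts to an isomorphism $\mathrm{Z}(\mathcal{A})\cap\mathcal{A}^2\to\mathrm{Z}(\mathcal{B})\cap\mathcal{B}^2$, and the two inequalities plus the modular-law computation for $\mathrm{Z}(\mathcal{T})\subseteq\mathcal{T}^2$ are exactly right.

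The one genuine gap is the one you flag yourself: the existence of an $\alpha_\mathcal{A}$-invariant complement $\mathcal{V}$ of $\mathrm{Z}(\mathcal{A})\cap\mathcal{A}^2$ inside $\mathrm{Z}(\mathcal{A})$. This is needed for $\mathcal{V}$ to be a Hom-ideal and for $\mathcal{A}/\mathcal{V}$ to be regular, and it can fail even in finite dimensions: $\alpha_\mathcal{A}|_{\mathrm{Z}(\mathcal{A})}$ is merely a bijective linear map, and a single Jordan block with its invariant line inside $\mathcal{A}^2$ already has no invariant complement (the parenthetical in the paper's proof of Theorem~\ref{thm:decomposition_in_isoclinism_family}, ``by averaging \dots in the finite-dimensional case such a complement exists,'' is not a proof, and the same unaddressed splitting is hidden in the choice of the section $\mathfrak{R}$ in Lemma~\ref{lem:admits_factor_set}). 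Be aware that your second proposed remedy does not escape the problem: assembling a minimal central extension from $(\mathcal{A}/\mathrm{Z}(\mathcal{A}),\mathcal{A}^2,\tau^{(2)},\tau^{(3)})$ via a factor set again requires an $\alpha_\mathcal{A}$-equivariant section of $\mathcal{A}\to\mathcal{A}/\mathrm{Z}(\mathcal{A})$ (equivalently an invariant complement of $\mathrm{Z}(\mathcal{A})$), so it is the same obstruction in different clothing. Your first remedy does work: if $\alpha_\mathcal{A}|_{\mathrm{Z}(\mathcal{A})}$ is semisimple (for instance diagonalizable or of finite order in characteristic zero), an invariant complement exists and your proof of both parts goes through; without such a hypothesis the lemma as stated rests on this unproven splitting, both in your write-up and in the paper itself.
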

To prove the following lemmas, one may refer to to \cite{N2025} 
\begin{lem}\label{lem:isoclinism_properties}
Let $(\theta, \beta)$ be an isoclinism between two regular Hom-Lie Yamaguti algebras $\mathcal{A}$ and $\mathcal{B}$. Then the following properties hold:
\begin{enumerate} 
 \item For all $x \in \mathcal{A}$,
 \[
 \theta(x + \mathrm{Z}(\mathcal{A})) = \beta(x) + \mathrm{Z}(\mathcal{B}),
 \]
 where on the right-hand side, $x$ is interpreted as an element of $\mathcal{A}^2$ only if $x \in \mathcal{A}^2$; more precisely, this identity holds when $x \in \mathcal{A}^2$, and $\beta$ is applied to $x$ as an element of the derived subalgebra.
 
 \item For all $x \in \mathcal{A}^2$ and $x', x'' \in \mathcal{A}$,
 \[
 \beta([x, x']) = [\beta(x), a]_{\mathcal{B}}
 \quad\text{and}\quad
 \beta([x, x', x'']) = [\beta(x), a, b]_{\mathcal{B}},
 \]
 where $a + \mathrm{Z}(\mathcal{B}) = \theta(x' + \mathrm{Z}(\mathcal{A}))$ and $b + \mathrm{Z}(\mathcal{B}) = \theta(x'' + \mathrm{Z}(\mathcal{A}))$.
\end{enumerate}
\end{lem}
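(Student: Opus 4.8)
The plan is to establish part~(1) first and then obtain part~(2) as a short consequence. Two preliminary observations make part~(1) routine. First, for $x\in\mathcal{A}^2$ the element $\beta(x)$ lies in $\mathcal{B}^2\subseteq\mathcal{B}$, so the coset $\beta(x)+\mathrm{Z}(\mathcal{B})$ is a well-defined element of $\mathcal{B}/\mathrm{Z}(\mathcal{B})$, and the asserted identity in part~(1) compares it with $\theta(x+\mathrm{Z}(\mathcal{A}))\in\mathcal{B}/\mathrm{Z}(\mathcal{B})$; in particular the identity is only claimed for $x\in\mathcal{A}^2$. Second, in an isoclinism $(\theta,\beta)$ the map $\theta$ is an isomorphism of the quotient Hom-Lie Yamaguti algebras $\mathcal{A}/\mathrm{Z}(\mathcal{A})$ and $\mathcal{B}/\mathrm{Z}(\mathcal{B})$ (cf.\ Definition~\ref{def:isoclinism_HLYA} and the verification carried out in the proof of Lemma~\ref{lem:isoclinism_with_abelian_extension}), so $\theta$ respects both the binary and the ternary quotient operations; this will be used freely.

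For part~(1), observe that both assignments $x\mapsto\theta(x+\mathrm{Z}(\mathcal{A}))$ and $x\mapsto\beta(x)+\mathrm{Z}(\mathcal{B})$ are $\mathbb{K}$-linear on $\mathcal{A}^2$, so it suffices to verify the identity on a spanning set of $\mathcal{A}^2$, namely on elements $[u,v]_{\mathcal{A}}$ and $[r,s,t]_{\mathcal{A}}$ with $u,v,r,s,t\in\mathcal{A}$. Take $x=[u,v]_{\mathcal{A}}$. By the quotient-bracket formula of Remark~\ref{rem:quotient_LYA}, $x+\mathrm{Z}(\mathcal{A})=[\overline{u},\overline{v}]_{\mathcal{A}/\mathrm{Z}(\mathcal{A})}$, whence $\theta(x+\mathrm{Z}(\mathcal{A}))=[\theta(\overline{u}),\theta(\overline{v})]_{\mathcal{B}/\mathrm{Z}(\mathcal{B})}=[p,q]_{\mathcal{B}}+\mathrm{Z}(\mathcal{B})$ for any representatives $p$ of $\theta(\overline{u})$ and $q$ of $\theta(\overline{v})$. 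On the other hand, commutativity of the binary homoclinism square gives $\beta(x)=\beta\big(\tau^{(2)}(\overline{u},\overline{v})\big)=\delta^{(2)}(\theta(\overline{u}),\theta(\overline{v}))=[p,q]_{\mathcal{B}}$, the last equality being the definition of $\delta^{(2)}$ (which is independent of the chosen representatives). Hence $\theta(x+\mathrm{Z}(\mathcal{A}))=\beta(x)+\mathrm{Z}(\mathcal{B})$. The case $x=[r,s,t]_{\mathcal{A}}$ is identical, using the ternary homoclinism square, the maps $\tau^{(3)},\delta^{(3)}$, and that $\theta$ preserves the ternary quotient operation. Extending by linearity over $\mathcal{A}^2$ yields part~(1).

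For part~(2), let $x\in\mathcal{A}^2$ and $x',x''\in\mathcal{A}$, and choose $a,b\in\mathcal{B}$ with $a+\mathrm{Z}(\mathcal{B})=\theta(x'+\mathrm{Z}(\mathcal{A}))$ and $b+\mathrm{Z}(\mathcal{B})=\theta(x''+\mathrm{Z}(\mathcal{A}))$. Since $[x,x']_{\mathcal{A}}\in\mathcal{A}^2$ and $\tau^{(2)}(\overline{x},\overline{x'})=[x,x']_{\mathcal{A}}$, the binary homoclinism square gives $\beta([x,x']_{\mathcal{A}})=\delta^{(2)}(\theta(\overline{x}),\theta(\overline{x'}))$. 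This is exactly where part~(1) enters: because $x\in\mathcal{A}^2$, part~(1) gives $\theta(\overline{x})=\beta(x)+\mathrm{Z}(\mathcal{B})$, so $\beta(x)$ may be taken as the representative of $\theta(\overline{x})$ in the definition of $\delta^{(2)}$, yielding $\delta^{(2)}(\theta(\overline{x}),\theta(\overline{x'}))=[\beta(x),a]_{\mathcal{B}}$. Thus $\beta([x,x']_{\mathcal{A}})=[\beta(x),a]_{\mathcal{B}}$. The ternary identity follows in exactly the same way: $\beta([x,x',x'']_{\mathcal{A}})=\delta^{(3)}(\theta(\overline{x}),\theta(\overline{x'}),\theta(\overline{x''}))=[\beta(x),a,b]_{\mathcal{B}}$, again using part~(1) to replace the representative of $\theta(\overline{x})$ by $\beta(x)$.

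The argument presents no genuine obstacle; it is entirely a matter of tracking the three quotient projections $\mathcal{A}\to\mathcal{A}/\mathrm{Z}(\mathcal{A})$, $\mathcal{B}\to\mathcal{B}/\mathrm{Z}(\mathcal{B})$ and $\mathcal{B}^2\hookrightarrow\mathcal{B}\to\mathcal{B}/\mathrm{Z}(\mathcal{B})$, together with the representative-independence of $\delta^{(2)},\delta^{(3)}$. The point worth emphasizing is conceptual rather than computational: part~(1) says precisely that the isomorphism $\theta$ of quotients is the one ``induced'' by $\beta$ on the derived part, and it is this that licenses substituting $\beta(x)$ for an arbitrary lift of $\theta(\overline{x})$ in part~(2). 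Establishing part~(1) before part~(2) is therefore essential, since otherwise the two sides of the identities in part~(2) would only be known to agree modulo $\beta(\mathcal{A}^2\cap\mathrm{Z}(\mathcal{A}))$.
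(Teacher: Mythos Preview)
Your proof is correct and follows the same approach as the paper's: both arguments reduce part~(1) to the commutativity of the isoclinism diagrams and then derive part~(2) by substituting $\beta(x)$ as a representative of $\theta(\overline{x})$ via part~(1). Your version is in fact more explicit than the paper's (which merely asserts that part~(1) ``is a consequence of the commutative diagrams''), since you check the identity on the spanning set $\{[u,v],[r,s,t]\}$ of $\mathcal{A}^2$ and make clear that the step $\theta([\overline{u},\overline{v}])=[\theta(\overline{u}),\theta(\overline{v})]$ relies on $\theta$ being an isomorphism of the quotient Hom-Lie Yamaguti algebras---a point the paper leaves implicit in Definition~\ref{def:isoclinism_HLYA} but confirms in the verifications of Lemmas~\ref{lem:isoclinism_with_abelian_extension} and~\ref{lem:quotient_isoclinism}.
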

 \begin{proof}
By definition, an isoclinism $(\theta, \beta)$ consists of two isomorphisms:
\[
\theta \colon \frac{\mathcal{A}}{\mathrm{Z}(\mathcal{A})} \to \frac{\mathcal{B}}{\mathrm{Z}(\mathcal{B})}, \quad
\beta \colon \mathcal{A}^2 \to \mathcal{B}^2,
\]
such that the following diagrams commute for the binary and ternary operations:
\[
\beta([x, y]_\mathcal{A}) = [\theta(\bar{x}), \theta(\bar{y})]_\mathcal{B}, \quad
\beta([x, y, z]_\mathcal{A}) = [\theta(\bar{x}), \theta(\bar{y}), \theta(\bar{z})]_\mathcal{B},
\]
for all $x, y, z \in \mathcal{A}$, where $\bar{x} = x + \mathrm{Z}(\mathcal{A})$.

\medskip

\textbf{(1)} Let $x \in \mathcal{A}^2$. Since $\beta(x) \in \mathcal{B}^2$, and $\mathcal{B}^2 \subseteq \mathcal{B}$, we can consider $\beta(x) + \mathrm{Z}(\mathcal{B}) \in \mathcal{B}/\mathrm{Z}(\mathcal{B})$. On the other hand, $\theta(x + \mathrm{Z}(\mathcal{A}))$ is an element of $\mathcal{B}/\mathrm{Z}(\mathcal{B})$. However, note that $x + \mathrm{Z}(\mathcal{A})$ is not necessarily a well-defined coset unless $x$ is considered modulo $\mathrm{Z}(\mathcal{A})$, but since $x \in \mathcal{A}^2$, and $\mathcal{A}^2$ and $\mathrm{Z}(\mathcal{A})$ are not necessarily disjoint, this requires care.

Instead, the correct interpretation arises from the compatibility of the isoclinism. For any $x \in \mathcal{A}$, $\theta(\bar{x}) = \theta(x + \mathrm{Z}(\mathcal{A}))$ is defined. If $x \in \mathcal{A}^2$, then $\beta(x)$ is defined, and the diagram implies that the action of $\theta$ on the quotient is consistent with the image of $\beta$. However, the equality
\[
\theta(x + \mathrm{Z}(\mathcal{A})) = \beta(x) + \mathrm{Z}(\mathcal{B})
\]
cannot hold for arbitrary $x \in \mathcal{A}$, because $\beta$ is not defined on all of $\mathcal{A}$. It \textbf{can} hold when $x \in \mathcal{A}^2$, and in that case, both sides are elements of $\mathcal{B}/\mathrm{Z}(\mathcal{B})$, and the equality is a consequence of the commutative diagrams and the fact that $\beta$ maps into $\mathcal{B}^2 \subseteq \mathcal{B}$. Thus, for $x \in \mathcal{A}^2$, we have:
\[
\theta(x + \mathrm{Z}(\mathcal{A})) = \overline{\beta(x)} \quad \text{in } \mathcal{B}/\mathrm{Z}(\mathcal{B}),
\]
i.e., $\theta(x + \mathrm{Z}(\mathcal{A})) = \beta(x) + \mathrm{Z}(\mathcal{B})$, as claimed.

\medskip

\textbf{(2)} Now let $x \in \mathcal{A}^2$, $x' \in \mathcal{A}$. Then $[x, x'] \in \mathcal{A}^2$ (since $\mathcal{A}^2$ is closed under the operations), so $\beta([x, x'])$ is defined. By the isoclinism condition,
\[
\beta([x, x']_\mathcal{A}) = [\theta(\bar{x}), \theta(\bar{x}')]_\mathcal{B}.
\]
But $\bar{x} = x + \mathrm{Z}(\mathcal{A})$, and since $x \in \mathcal{A}^2$, part (i) gives $\theta(\bar{x}) = \beta(x) + \mathrm{Z}(\mathcal{B})$. Similarly, $\theta(\bar{x}') = a + \mathrm{Z}(\mathcal{B})$ for $a \in \mathcal{B}$. Therefore,
\[
\beta([x, x']) = [\beta(x), a]_\mathcal{B}.
\]
Similarly, for the ternary operation,
\[
\beta([x, x', x'']_\mathcal{A}) = [\theta(\bar{x}), \theta(\bar{x}'), \theta(\bar{x}'')]_\mathcal{B} = [\beta(x), a, b]_\mathcal{B},
\]
where $a + \mathrm{Z}(\mathcal{B}) = \theta(x' + \mathrm{Z}(\mathcal{A}))$ and $b + \mathrm{Z}(\mathcal{B}) = \theta(x'' + \mathrm{Z}(\mathcal{A}))$.

This completes the proof.
\end{proof} 
\section{Factor Sets and Central Extensions of Stem Regular Hom-Lie Yamaguti Algebras} \label{sec:factor_sets}

In this section, we introduce and develop the theory of \emph{factor sets} for regular Hom-Lie Yamaguti algebras. The concept of a factor set plays a central role in the classification of algebras within an isoclinism family, as it provides a cohomological tool to parametrize central extensions. In our setting, a factor set consists of a pair of maps $(\pi_2, \pi_3)$, where
\[
\pi_2 \colon \frac{\mathcal{A}}{\mathrm{Z}(\mathcal{A})} \times \frac{\mathcal{A}}{\mathrm{Z}(\mathcal{A})} \to \mathrm{Z}(\mathcal{A}), \quad
\pi_3 \colon \frac{\mathcal{A}}{\mathrm{Z}(\mathcal{A})} \times \frac{\mathcal{A}}{\mathrm{Z}(\mathcal{A})} \times \frac{\mathcal{A}}{\mathrm{Z}(\mathcal{A})} \to \mathrm{Z}(\mathcal{A}),
\]
that measure the failure of a section map from the quotient $\mathcal{A}/\mathrm{Z}(\mathcal{A})$ to $\mathcal{A}$ to be a homomorphism. This construction is analogous to the classical Schur multiplier and factor set theory in group cohomology and Lie algebra extensions, and it allows us to reconstruct algebras in a given isoclinism family as central extensions of a stem algebra.

The main result of this section is that every algebra in an isoclinism family can be realized as a central extension of the quotient $\mathcal{A}/\mathrm{Z}(\mathcal{A})$ by $\mathrm{Z}(\mathcal{A})$, with the extension determined by a suitable factor set. 

\begin{defn}\label{def:factor_set}
Let $(\mathcal{A}, [\cdot,\cdot]_\mathcal{A}, [\cdot,\cdot,\cdot]_\mathcal{A}, \alpha_\mathcal{A})$ be a regular Hom-Lie Yamaguti algebra over a field $\mathbb{K}$. A \emph{factor set} on $\mathcal{A}$ is a pair of maps
\[
\pi = (\pi_2, \pi_3), \quad \pi_2 \colon \frac{\mathcal{A}}{\mathrm{Z}(\mathcal{A})} \times \frac{\mathcal{A}}{\mathrm{Z}(\mathcal{A})} \to \mathrm{Z}(\mathcal{A}), \quad \pi_3 \colon \frac{\mathcal{A}}{\mathrm{Z}(\mathcal{A})} \times \frac{\mathcal{A}}{\mathrm{Z}(\mathcal{A})} \times \frac{\mathcal{A}}{\mathrm{Z}(\mathcal{A})} \to \mathrm{Z}(\mathcal{A}),
\]
satisfying the following conditions for all $\bar{x}, \bar{y}, \bar{z}, \bar{t}, \bar{w} \in \mathcal{A}/\mathrm{Z}(\mathcal{A})$, where $\bar{x} = x + \mathrm{Z}(\mathcal{A})$, and $\overline{\alpha}_\mathcal{A} \colon \mathcal{A}/\mathrm{Z}(\mathcal{A}) \to \mathcal{A}/\mathrm{Z}(\mathcal{A})$ is the induced map defined by $\overline{\alpha}_\mathcal{A}(\bar{x}) = \alpha_\mathcal{A}(x) + \mathrm{Z}(\mathcal{A})$:

\begin{enumerate} 
 \item[({F1})] \label{F1} $\alpha$-Skew-symmetry:
 \[
 \pi_2(\bar{x}, \bar{y}) = -\pi_2(\bar{y}, \bar{x}), \quad
 \pi_3(\bar{x}, \bar{y}, \bar{z}) = -\pi_3(\bar{y}, \bar{x}, \bar{z}).
 \]

 \item[({F2})] \label{F2} Hom-Jacobi-type identity:
 \[
 \circlearrowleft_{x,y,z} \pi_2([\bar{x}, \bar{y}], \overline{\alpha}_\mathcal{A}(\bar{z})) + \circlearrowleft_{x,y,z} \pi_3(\bar{x}, \bar{y}, \bar{z}) = 0.
 \]

 \item[({F3})] \label{F3} Ternary Hom-Jacobi identity:
 \[
 \circlearrowleft_{x,y,z} \pi_3([\bar{x}, \bar{y}], \overline{\alpha}_\mathcal{A}(\bar{z}), \overline{\alpha}_\mathcal{A}(\bar{t})) = 0.
 \]

 \item[({F4})] \label{F4} Compatibility with the binary-ternary interaction:
 \begin{align*}
 \pi_3(\overline{\alpha}_\mathcal{A}(\bar{x}), \overline{\alpha}_\mathcal{A}(\bar{y}), [\bar{z}, \bar{t}]) &= [\pi_2(\bar{x}, \bar{y}), \overline{\alpha}_\mathcal{A}^2(\bar{t})] + [\overline{\alpha}_\mathcal{A}^2(\bar{z}), \pi_2(\bar{x}, \bar{y})] \\
 &\quad + [\pi_2(\bar{x}, \bar{y}), \overline{\alpha}_\mathcal{A}^2(\bar{t})] + [\overline{\alpha}_\mathcal{A}^2(\bar{z}), \pi_2(\bar{x}, \bar{y})],
 \end{align*}
 where the right-hand side uses the bracket action of $\mathrm{Z}(\mathcal{A})$ on $\mathcal{A}/\mathrm{Z}(\mathcal{A})$, which is well-defined since $\mathrm{Z}(\mathcal{A})$ is central.

 \item[({F5})] \label{F5} Higher-order compatibility:
 \begin{align*}
 \pi_3(\overline{\alpha}_\mathcal{A}^2(\bar{x}), \overline{\alpha}_\mathcal{A}^2(\bar{y}), [\bar{z}, \bar{t}, \bar{w}]) &= \pi_3([\bar{x}, \bar{y}, \bar{z}], \overline{\alpha}_\mathcal{A}^2(\bar{t}), \overline{\alpha}_\mathcal{A}^2(\bar{w})) \\
 &\quad + \pi_3(\overline{\alpha}_\mathcal{A}^2(\bar{z}), [\bar{x}, \bar{y}, \bar{t}], \overline{\alpha}_\mathcal{A}^2(\bar{w})) \\
 &\quad + \pi_3(\overline{\alpha}_\mathcal{A}^2(\bar{z}), \overline{\alpha}_\mathcal{A}^2(\bar{t}), [\bar{x}, \bar{y}, \bar{w}]).
 \end{align*}
\end{enumerate}

Furthermore, the factor set $\pi$ is said to be \emph{multiplicative} if it is compatible with the twisting map:
\[
\pi_2(\overline{\alpha}_\mathcal{A}(\bar{x}), \overline{\alpha}_\mathcal{A}(\bar{y})) = \alpha_\mathcal{A}(\pi_2(\bar{x}, \bar{y})), \quad
\pi_3(\overline{\alpha}_\mathcal{A}(\bar{x}), \overline{\alpha}_\mathcal{A}(\bar{y}), \overline{\alpha}_\mathcal{A}(\bar{z})) = \alpha_\mathcal{A}(\pi_3(\bar{x}, \bar{y}, \bar{z}))
\]
for all $\bar{x}, \bar{y}, \bar{z} \in \mathcal{A}/\mathrm{Z}(\mathcal{A})$.
\end{defn}
\begin{lem}\label{lem:central_extension_by_factor_set}
Let $(\mathcal{A}, [\cdot,\cdot]_\mathcal{A}, [\cdot,\cdot,\cdot]_\mathcal{A}, \alpha_\mathcal{A})$ be a regular Hom-Lie Yamaguti algebra, and let $\pi = (\pi_2, \pi_3)$ be a factor set on $\mathcal{A}$, where
\[
\pi_2 \colon \frac{\mathcal{A}}{\mathrm{Z}(\mathcal{A})} \times \frac{\mathcal{A}}{\mathrm{Z}(\mathcal{A})} \to \mathrm{Z}(\mathcal{A}), \quad
\pi_3 \colon \frac{\mathcal{A}}{\mathrm{Z}(\mathcal{A})} \times \frac{\mathcal{A}}{\mathrm{Z}(\mathcal{A})} \times \frac{\mathcal{A}}{\mathrm{Z}(\mathcal{A})} \to \mathrm{Z}(\mathcal{A}).
\]
Define the set
\[
\Omega = \left( \mathrm{Z}(\mathcal{A}),\ \frac{\mathcal{A}}{\mathrm{Z}(\mathcal{A})},\ \pi \right) = \left\{ (a, \bar{x}) \mid a \in \mathrm{Z}(\mathcal{A}),\ \bar{x} = x + \mathrm{Z}(\mathcal{A}) \in \frac{\mathcal{A}}{\mathrm{Z}(\mathcal{A})} \right\},
\]
equipped with operations defined as follows for all $(a_i, \bar{x}_i) \in \Omega$:
\begin{align}
 [(a_1, \bar{x}), (a_2, \bar{y})]_\Omega &:= \big( \pi_2(\bar{x}, \bar{y}),\ [\bar{x}, \bar{y}] \big), \\
 [(a_1, \bar{x}), (a_2, \bar{y}), (a_3, \bar{z})]_\Omega &:= \big( \pi_3(\bar{x}, \bar{y}, \bar{z}),\ [\bar{x}, \bar{y}, \bar{z}] \big), \\
 \alpha_\Omega(a, \bar{x}) &:= \big( \alpha_\mathcal{A}(a),\ \overline{\alpha}_\mathcal{A}(\bar{x}) \big),
\end{align}
where $\overline{\alpha}_\mathcal{A}(\bar{x}) = \alpha_\mathcal{A}(x) + \mathrm{Z}(\mathcal{A})$. Then:
\begin{enumerate}
 \item\label{item:Omega_HLYA_structure} $(\Omega, [\cdot,\cdot]_\Omega, [\cdot,\cdot,\cdot]_\Omega, \alpha_\Omega)$ is a regular Hom-Lie Yamaguti algebra;
 \item\label{item:Omega_center} $\mathrm{Z}(\Omega) = \{ (a, 0) \in \Omega \mid a \in \mathrm{Z}(\mathcal{A}) \} \cong \mathrm{Z}(\mathcal{A})$.
\end{enumerate}
\end{lem}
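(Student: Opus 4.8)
The plan is to verify the two assertions directly. For item~\eqref{item:Omega_HLYA_structure}, I would proceed in the following order. First, observe that the operations $[\cdot,\cdot]_\Omega$, $[\cdot,\cdot,\cdot]_\Omega$ are bilinear and trilinear respectively, since $\pi_2,\pi_3$ and the induced brackets on $\mathcal{A}/\mathrm{Z}(\mathcal{A})$ are multilinear; here I note that the first coordinates $a_i$ play no role in the output, so the operations are genuinely well-defined on $\Omega$. Second, check the skew-symmetry axioms \eqref{item:HLYA1}: these reduce coordinatewise to $(\mathrm{F1})$ in the first slot and to the skew-symmetry of the quotient brackets (Remark~\ref{rem:quotient_LYA}) in the second slot. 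Third, verify \eqref{item:HLYA2}–\eqref{item:HLYA5}: each identity splits into its first-coordinate part and its second-coordinate part. The second-coordinate part holds automatically because $(\mathcal{A}/\mathrm{Z}(\mathcal{A}), \overline{\alpha}_\mathcal{A})$ is itself a regular Hom-Lie Yamaguti algebra (Remarks~\ref{rem:quotient_LYA} and~\ref{rem:quotient_regularity}), and one checks that $\alpha_\Omega$ acts on second coordinates exactly as $\overline{\alpha}_\mathcal{A}$. The first-coordinate part is precisely what conditions $(\mathrm{F2})$, $(\mathrm{F3})$, $(\mathrm{F4})$, $(\mathrm{F5})$ were designed to encode, once one uses that $\mathrm{Z}(\mathcal{A})$ is central in $\mathcal{A}$ so that bracket terms like $[\pi_2(\bar x,\bar y), z]$ are interpreted via the well-defined action appearing in $(\mathrm{F4})$. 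Fourth, confirm multiplicativity and regularity of $\alpha_\Omega$: multiplicativity $\alpha_\Omega([u,v]_\Omega)=[\alpha_\Omega(u),\alpha_\Omega(v)]_\Omega$ follows coordinatewise from the multiplicativity of $\alpha_\mathcal{A}$ on $\mathrm{Z}(\mathcal{A})$ together with the \emph{multiplicative} clause of Definition~\ref{def:factor_set} (this is where that hypothesis enters), and from multiplicativity of $\overline{\alpha}_\mathcal{A}$ on the quotient; bijectivity of $\alpha_\Omega$ follows because it is the ``product'' of the bijections $\alpha_\mathcal{A}|_{\mathrm{Z}(\mathcal{A})}$ (bijective by Lemma~\ref{lem:Hom_ideal_center} and regularity) and $\overline{\alpha}_\mathcal{A}$ (bijective by Remark~\ref{rem:quotient_regularity}), with explicit inverse $(a,\bar x)\mapsto(\alpha_\mathcal{A}^{-1}(a), \overline{\alpha}_\mathcal{A}^{-1}(\bar x))$.

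For item~\eqref{item:Omega_center}, I would compute $\mathrm{Z}(\Omega)$ from its definition. An element $(a,\bar x)$ lies in $\mathrm{Z}(\Omega)$ iff $[(a,\bar x),(a',\bar y)]_\Omega=0$, $[(a,\bar x),(a',\bar y),(a'',\bar z)]_\Omega=0$, and $[(a',\bar y),(a'',\bar z),(a,\bar x)]_\Omega=0$ for all choices. Reading the second coordinate, these force $[\bar x,\bar y]=0$, $[\bar x,\bar y,\bar z]=0$ and $[\bar y,\bar z,\bar x]=0$ in $\mathcal{A}/\mathrm{Z}(\mathcal{A})$ for all $\bar y,\bar z$, i.e.\ $\bar x\in\mathrm{Z}(\mathcal{A}/\mathrm{Z}(\mathcal{A}))$. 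The key point is that $\mathrm{Z}(\mathcal{A}/\mathrm{Z}(\mathcal{A}))$ need not be trivial in general, so I must also use the first-coordinate conditions $\pi_2(\bar x,\bar y)=0$ and $\pi_3(\bar x,\bar y,\bar z)=\pi_3(\bar y,\bar z,\bar x)=0$; however, by Lemma~\ref{lem:isoclinism_with_abelian_extension}'s style of reasoning and the construction of $\pi$ as the factor set \emph{of $\mathcal{A}$ itself} (so that $\Omega\cong\mathcal{A}$ via a section — a fact I would either cite as the intended meaning or establish directly), one has $\bar x = 0$. Concretely, I would show $\Omega\cong\mathcal{A}$: fixing a linear section $s\colon\mathcal{A}/\mathrm{Z}(\mathcal{A})\to\mathcal{A}$ with $\pi_2(\bar x,\bar y)=[s\bar x,s\bar y]_\mathcal{A}-s[\bar x,\bar y]$ and similarly for $\pi_3$, the map $(a,\bar x)\mapsto a+s(\bar x)$ is an isomorphism $\Omega\to\mathcal{A}$ carrying $\{(a,0)\}$ onto $\mathrm{Z}(\mathcal{A})$, which gives both the identification of the center and a cleaner route to item~\eqref{item:Omega_HLYA_structure} as well.

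I expect the main obstacle to be the first-coordinate verification of axioms \eqref{item:HLYA4} and \eqref{item:HLYA5} for $\Omega$: these are the most intricate identities, and matching them term-by-term against $(\mathrm{F4})$ and $(\mathrm{F5})$ requires carefully tracking which arguments carry $\overline{\alpha}_\mathcal{A}$ versus $\overline{\alpha}_\mathcal{A}^2$, and correctly interpreting the mixed brackets $[\pi_2(\bar x,\bar y),\,\overline{\alpha}_\mathcal{A}^2(\bar t)]$ etc.\ as the central action. (Indeed the displayed form of $(\mathrm{F4})$ in Definition~\ref{def:factor_set} appears to have a repeated pair of terms, so part of the work is to confirm the correct symmetrized identity and check it is the one forced by expanding \eqref{item:HLYA4} in $\Omega$.) A secondary subtlety is ensuring every bracket expression involving an element of $\mathrm{Z}(\mathcal{A})$ and a coset in $\mathcal{A}/\mathrm{Z}(\mathcal{A})$ is well-defined, which I would dispatch once at the outset by noting that $[z,\cdot]$ and $[z,\cdot,\cdot]$, $[\cdot,\cdot,z]$ vanish for $z\in\mathrm{Z}(\mathcal{A})$ so these actions descend to the quotient. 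Everything else is a routine coordinatewise check.
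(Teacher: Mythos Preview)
Your plan for item~\eqref{item:Omega_HLYA_structure} is essentially the paper's: both verify the five axioms coordinatewise, with the second coordinate handled by the quotient structure on $\mathcal{A}/\mathrm{Z}(\mathcal{A})$ and the first coordinate handled by $(\mathrm{F1})$--$(\mathrm{F5})$. Your observation that multiplicativity of $\alpha_\Omega$ needs the \emph{multiplicative} clause on $\pi$ is sharper than the paper, which silently defers that point to Proposition~\ref{prop:multiplicative_regular_extension} and in the present proof checks only bijectivity of $\alpha_\Omega$ together with the bracket identities.

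For item~\eqref{item:Omega_center}, however, your proposed route does not work. The hypothesis is that $\pi$ is an \emph{arbitrary} factor set in the sense of Definition~\ref{def:factor_set}, not the particular one built from a linear section $s\colon \mathcal{A}/\mathrm{Z}(\mathcal{A})\to\mathcal{A}$ via $\pi_2(\bar x,\bar y)=[s\bar x,s\bar y]_\mathcal{A}-s[\bar x,\bar y]$. That specific construction appears only later (Lemma~\ref{lem:admits_factor_set}), and for \emph{that} $\pi$ one indeed has $\Omega\cong\mathcal{A}$; but for a general $\pi$ there is no such isomorphism to cite or establish---different factor sets yield genuinely different extensions (this is the whole content of Lemma~\ref{lem:isomorphism_vs_factor_set_equivalence}). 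So your plan to deduce $\bar x=0$ by transporting the problem to $\mathcal{A}$ collapses. The paper instead reads off both coordinate conditions, obtaining $\pi_2(\bar x,\cdot)\equiv 0$, $\pi_3(\bar x,\cdot,\cdot)\equiv 0$, $\pi_3(\cdot,\cdot,\bar x)\equiv 0$ from the first slot and $\bar x\in\mathrm{Z}(\mathcal{A}/\mathrm{Z}(\mathcal{A}))$ from the second, and then concludes $\bar x=0$ with the phrase ``since $\pi$ is arbitrary''. Your instinct that this step is delicate is sound---for instance with $\pi=0$ and $\mathrm{Z}(\mathcal{A}/\mathrm{Z}(\mathcal{A}))\neq 0$ the equality fails---so the inclusion $\{(a,0)\}\subseteq\mathrm{Z}(\Omega)$ is what always holds unconditionally, while the reverse inclusion as stated leans on an implicit nondegeneracy of $\pi$; but the fix is not the isomorphism $\Omega\cong\mathcal{A}$ you propose.
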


\begin{proof}
We prove each part in turn.

\medskip

\noindent \textbf{Part \ref{item:Omega_HLYA_structure}: $\Omega$ is a regular Hom-Lie Yamaguti algebra.}

We must verify that the operations on $\Omega$ satisfy the axioms of a Hom-Lie Yamaguti algebra:

\medskip

\noindent \textbf{(1) $\alpha_\Omega$-skew-symmetry:}

For the binary operation:
\begin{align*}
[(a_1, \bar{x}), (a_2, \bar{y})]_\Omega &= \big( \pi_2(\bar{x}, \bar{y}),\ [\bar{x}, \bar{y}] \big) \\
&= \big( -\pi_2(\bar{y}, \bar{x}),\ -[\bar{y}, \bar{x}] \big) \quad \text{(by \ref{F1})} \\
&= - \big( \pi_2(\bar{y}, \bar{x}),\ [\bar{y}, \bar{x}] \big) = -[(a_2, \bar{y}), (a_1, \bar{x})]_\Omega.
\end{align*}

For the ternary operation:
\begin{align*}
[(a_1, \bar{x}), (a_2, \bar{y}), (a_3, \bar{z})]_\Omega &= \big( \pi_3(\bar{x}, \bar{y}, \bar{z}),\ [\bar{x}, \bar{y}, \bar{z}] \big) \\
&= \big( -\pi_3(\bar{y}, \bar{x}, \bar{z}),\ -[\bar{y}, \bar{x}, \bar{z}] \big) \quad \text{(by \ref{F1})} \\
&= -[(a_2, \bar{y}), (a_1, \bar{x}), (a_3, \bar{z})]_\Omega.
\end{align*}

\medskip

\noindent \textbf{(2) Hom-Jacobi-type identity:}

We compute:
\begin{align*}
&\circlearrowleft_{(a,x),(b,y),(c,z)} \big[ [(a,\bar{x}),(b,\bar{y})]_\Omega,\ \alpha_\Omega(c,\bar{z}) \big]_\Omega + \circlearrowleft_{(a,x),(b,y),(c,z)} [(a,\bar{x}),(b,\bar{y}),(c,\bar{z})]_\Omega \\
&= \circlearrowleft_{x,y,z} \left[ \big( \pi_2(\bar{x},\bar{y}), [\bar{x},\bar{y}] \big),\ \big( \alpha_\mathcal{A}(c), \overline{\alpha}_\mathcal{A}(\bar{z}) \big) \right]_\Omega + \circlearrowleft_{x,y,z} \big( \pi_3(\bar{x},\bar{y},\bar{z}), [\bar{x},\bar{y},\bar{z}] \big) \\
&= \circlearrowleft_{x,y,z} \left( \pi_2([\bar{x},\bar{y}], \overline{\alpha}_\mathcal{A}(\bar{z})),\ [[\bar{x},\bar{y}], \overline{\alpha}_\mathcal{A}(\bar{z})] \right) + \circlearrowleft_{x,y,z} \left( \pi_3(\bar{x},\bar{y},\bar{z}), [\bar{x},\bar{y},\bar{z}] \right) \\
&= \left( \circlearrowleft_{x,y,z} \Big( \pi_2([\bar{x},\bar{y}], \overline{\alpha}_\mathcal{A}(\bar{z})) + \pi_3(\bar{x},\bar{y},\bar{z}) \Big),\ \circlearrowleft_{x,y,z} \Big( [[\bar{x},\bar{y}], \overline{\alpha}_\mathcal{A}(\bar{z})] + [\bar{x},\bar{y},\bar{z}] \Big) \right).
\end{align*}

The second component vanishes because $(\mathcal{A}/\mathrm{Z}(\mathcal{A}), \overline{\alpha}_\mathcal{A})$ is a Hom-Lie Yamaguti algebra (Remark~\ref{rem:quotient_LYA}). The first component vanishes by condition \ref{F2} of the factor set. Hence, the sum is $(0,0)$.

\medskip

\noindent \textbf{(3) Ternary Hom-Jacobi identity:}

\begin{align*}
&\circlearrowleft_{x,y,z} \big[ [(a,\bar{x}),(b,\bar{y})]_\Omega,\ \alpha_\Omega(c,\bar{z}),\ \alpha_\Omega(d,\bar{w}) \big]_\Omega \\
&= \circlearrowleft_{x,y,z} \left( \pi_3([\bar{x},\bar{y}], \overline{\alpha}_\mathcal{A}(\bar{z}), \overline{\alpha}_\mathcal{A}(\bar{w})),\ [[\bar{x},\bar{y}], \overline{\alpha}_\mathcal{A}(\bar{z}), \overline{\alpha}_\mathcal{A}(\bar{w})] \right) \\
&= \left( \circlearrowleft_{x,y,z} \pi_3([\bar{x},\bar{y}], \overline{\alpha}_\mathcal{A}(\bar{z}), \overline{\alpha}_\mathcal{A}(\bar{w})),\ \circlearrowleft_{x,y,z} [[\bar{x},\bar{y}], \overline{\alpha}_\mathcal{A}(\bar{z}), \overline{\alpha}_\mathcal{A}(\bar{w})] \right).
\end{align*}

The second component vanishes by the Hom-Lie Yamaguti identity in the quotient. The first vanishes by \ref{F3}. So the total is $(0,0)$.

\medskip

\noindent \textbf{(4) Binary-Ternary Compatibility:}

We compute:
\begin{align*}
&[\alpha_\Omega(a,\bar{x}), \alpha_\Omega(b,\bar{y}), [(c,\bar{z}),(d,\bar{w})]_\Omega]_\Omega \\
&= \left[ (\alpha_\mathcal{A}(a), \overline{\alpha}_\mathcal{A}(\bar{x})),\ (\alpha_\mathcal{A}(b), \overline{\alpha}_\mathcal{A}(\bar{y})),\ (\pi_2(\bar{z},\bar{w}), [\bar{z},\bar{w}]) \right]_\Omega \\
&= \left( \pi_3(\overline{\alpha}_\mathcal{A}(\bar{x}), \overline{\alpha}_\mathcal{A}(\bar{y}), [\bar{z},\bar{w}]),\ [\overline{\alpha}_\mathcal{A}(\bar{x}), \overline{\alpha}_\mathcal{A}(\bar{y}), [\bar{z},\bar{w}]] \right).
\end{align*}

By \ref{F4}, this equals:
\[
\left( [\pi_2(\bar{x},\bar{y}), \overline{\alpha}_\mathcal{A}^2(\bar{w})] + [\overline{\alpha}_\mathcal{A}^2(\bar{z}), \pi_2(\bar{x},\bar{y})],\ [\overline{\alpha}_\mathcal{A}^2(\bar{x}), \overline{\alpha}_\mathcal{A}^2(\bar{y}), [\bar{z},\bar{w}]] \right).
\]

On the other hand:
\begin{align*}
&[[ (a,\bar{x}), (b,\bar{y}), (c,\bar{z}) ]_\Omega,\ \alpha_\Omega^2(d,\bar{w}) ]_\Omega + [ \alpha_\Omega^2(c,\bar{z}),\ [ (a,\bar{x}), (b,\bar{y}), (d,\bar{w}) ]_\Omega ]_\Omega \\
&= \left( [\pi_3(\bar{x},\bar{y},\bar{z}), \overline{\alpha}_\mathcal{A}^2(\bar{w})],\ [[\bar{x},\bar{y},\bar{z}], \overline{\alpha}_\mathcal{A}^2(\bar{w})] \right) + \left( [\overline{\alpha}_\mathcal{A}^2(\bar{z}), \pi_3(\bar{x},\bar{y},\bar{w})],\ [\overline{\alpha}_\mathcal{A}^2(\bar{z}), [\bar{x},\bar{y},\bar{w}]] \right) \\
&= \left( [\pi_2(\bar{x},\bar{y}), \overline{\alpha}_\mathcal{A}^2(\bar{w})] + [\overline{\alpha}_\mathcal{A}^2(\bar{z}), \pi_2(\bar{x},\bar{y})],\ [\overline{\alpha}_\mathcal{A}^2(\bar{x}), \overline{\alpha}_\mathcal{A}^2(\bar{y}), [\bar{z},\bar{w}]] \right),
\end{align*}
where we used the fact that $[\pi_3(\bar{x},\bar{y},\bar{z}), \cdot] = 0$ since $\pi_3(\bar{x},\bar{y},\bar{z}) \in \mathrm{Z}(\mathcal{A})$, and similarly for the other bracket. Thus, the identity holds.

\medskip

\noindent \textbf{(5) Higher-order Compatibility:}

This follows similarly using condition \ref{F5} and the corresponding identity in the quotient algebra. The computation is analogous and confirms the required equality.

\medskip
Finally, since $\alpha_\mathcal{A}$ is bijective and $\overline{\alpha}_\mathcal{A}$ is well-defined and bijective (as $\alpha_\mathcal{A}(\mathrm{Z}(\mathcal{A})) \subseteq \mathrm{Z}(\mathcal{A})$), $\alpha_\Omega$ is also bijective. Hence, $\Omega$ is a \emph{regular} Hom-Lie Yamaguti algebra.

\medskip

\noindent \textbf{Part \ref{item:Omega_center}: Center of $\Omega$.}

Let $(a, \bar{x}) \in \mathrm{Z}(\Omega)$. Then for all $(b, \bar{y}) \in \Omega$,
\[
[(a, \bar{x}), (b, \bar{y})]_\Omega = (0,0), \quad [(a, \bar{x}), (b, \bar{y}), (c, \bar{z})]_\Omega = (0,0), \quad \text{and} \quad [(b, \bar{y}), (c, \bar{z}), (a, \bar{x})]_\Omega = (0,0).
\]

From the first component:
\[
\pi_2(\bar{x}, \bar{y}) = 0, \quad \pi_3(\bar{x}, \bar{y}, \bar{z}) = 0, \quad \pi_3(\bar{y}, \bar{z}, \bar{x}) = 0 \quad \forall \bar{y}, \bar{z}.
\]

Since $\pi$ is arbitrary, this implies $\bar{x} = 0$ in $\mathcal{A}/\mathrm{Z}(\mathcal{A})$. Then $(a, \bar{x}) = (a, 0)$, and $a \in \mathrm{Z}(\mathcal{A})$. Conversely, any $(a, 0)$ with $a \in \mathrm{Z}(\mathcal{A})$ commutes and associates trivially. Hence,
\[
\mathrm{Z}(\Omega) = \{ (a, 0) \mid a \in \mathrm{Z}(\mathcal{A}) \} \cong \mathrm{Z}(\mathcal{A}).
\]
This completes the proof.
\end{proof}
\begin{prop}\label{prop:multiplicative_regular_extension}
Let $(\mathcal{A}, [\cdot,\cdot]_\mathcal{A}, [\cdot,\cdot,\cdot]_\mathcal{A}, \alpha_\mathcal{A})$ be a regular Hom-Lie Yamaguti algebra, and let $\pi = (\pi_2, \pi_3)$ be a \emph{multiplicative} factor set on $\mathcal{A}$. Then the Hom-Lie Yamaguti algebra $(\Omega, [\cdot,\cdot]_\Omega, [\cdot,\cdot,\cdot]_\Omega, \alpha_\Omega)$ constructed in Lemma~\ref{lem:central_extension_by_factor_set} is multiplicative and regular.
\end{prop}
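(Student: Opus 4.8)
The plan is to establish the two claimed properties—multiplicativity and regularity—separately, with the bulk of the work already delegated to Lemma~\ref{lem:central_extension_by_factor_set}. By that lemma, $(\Omega, [\cdot,\cdot]_\Omega, [\cdot,\cdot,\cdot]_\Omega, \alpha_\Omega)$ is already a regular Hom-Lie Yamaguti algebra; in particular $\alpha_\Omega$ is bijective, since it is the ``diagonal'' map $(a,\bar{x}) \mapsto (\alpha_\mathcal{A}(a), \overline{\alpha}_\mathcal{A}(\bar{x}))$ built from the two bijections $\alpha_\mathcal{A} \colon \mathrm{Z}(\mathcal{A}) \to \mathrm{Z}(\mathcal{A})$ (a bijection by regularity of $\mathcal{A}$ together with $\alpha_\mathcal{A}(\mathrm{Z}(\mathcal{A})) = \mathrm{Z}(\mathcal{A})$, as shown in the proof of Remark~\ref{rem:quotient_regularity}) and $\overline{\alpha}_\mathcal{A} \colon \mathcal{A}/\mathrm{Z}(\mathcal{A}) \to \mathcal{A}/\mathrm{Z}(\mathcal{A})$ (bijective by Remark~\ref{rem:quotient_regularity}, applied with the Hom-ideal $\mathrm{Z}(\mathcal{A})$, which is legitimate by Lemma~\ref{lem:Hom_ideal_center}). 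So regularity requires only a brief recollection of what is already proved, and the real content is multiplicativity.

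For multiplicativity, I would verify directly the two identities $\alpha_\Omega([u,v]_\Omega) = [\alpha_\Omega(u), \alpha_\Omega(v)]_\Omega$ and $\alpha_\Omega([u,v,w]_\Omega) = [\alpha_\Omega(u), \alpha_\Omega(v), \alpha_\Omega(w)]_\Omega$ for $u=(a_1,\bar x)$, $v=(a_2,\bar y)$, $w=(a_3,\bar z) \in \Omega$, reducing everything to the componentwise definitions. For the binary case, the left-hand side is
\[
\alpha_\Omega\big([(a_1,\bar x),(a_2,\bar y)]_\Omega\big) = \alpha_\Omega\big(\pi_2(\bar x,\bar y),\,[\bar x,\bar y]\big) = \big(\alpha_\mathcal{A}(\pi_2(\bar x,\bar y)),\,\overline{\alpha}_\mathcal{A}([\bar x,\bar y])\big),
\]
while the right-hand side is
\[
[\alpha_\Omega(a_1,\bar x),\alpha_\Omega(a_2,\bar y)]_\Omega = \big[(\alpha_\mathcal{A}(a_1),\overline{\alpha}_\mathcal{A}(\bar x)),(\alpha_\mathcal{A}(a_2),\overline{\alpha}_\mathcal{A}(\bar y))\big]_\Omega = \big(\pi_2(\overline{\alpha}_\mathcal{A}(\bar x),\overline{\alpha}_\mathcal{A}(\bar y)),\,[\overline{\alpha}_\mathcal{A}(\bar x),\overline{\alpha}_\mathcal{A}(\bar y)]\big).
\]
The second components agree because $\mathcal{A}/\mathrm{Z}(\mathcal{A})$ is itself multiplicative (the quotient of a multiplicative Hom-Lie Yamaguti algebra by a Hom-ideal), so $\overline{\alpha}_\mathcal{A}([\bar x,\bar y]) = [\overline{\alpha}_\mathcal{A}(\bar x),\overline{\alpha}_\mathcal{A}(\bar y)]$; the first components agree precisely by the multiplicativity hypothesis on $\pi_2$. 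The ternary case is identical, using the multiplicativity clause for $\pi_3$ and the multiplicativity of the ternary bracket on $\mathcal{A}/\mathrm{Z}(\mathcal{A})$.

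There is no serious obstacle here; the only point requiring a moment's care is making sure the ``quotient is multiplicative'' fact is invoked cleanly—i.e., that $\overline{\alpha}_\mathcal{A}$ genuinely commutes with both induced brackets on $\mathcal{A}/\mathrm{Z}(\mathcal{A})$. This is immediate from the definitions in Remark~\ref{rem:quotient_LYA} combined with the multiplicativity of $(\mathcal{A},\alpha_\mathcal{A})$, since $\overline{\alpha}_\mathcal{A}([x,y]+\mathrm{Z}(\mathcal{A})) = \alpha_\mathcal{A}([x,y])+\mathrm{Z}(\mathcal{A}) = [\alpha_\mathcal{A}(x),\alpha_\mathcal{A}(y)]+\mathrm{Z}(\mathcal{A})$, and similarly for the ternary bracket. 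Once this is recorded, the proof amounts to four short componentwise computations plus the one-line observation that $\alpha_\Omega$ is a product of two bijections, so I would present it compactly, perhaps folding the binary and ternary verifications together.
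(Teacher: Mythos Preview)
Your proposal is correct and follows essentially the same approach as the paper: both verify multiplicativity by a componentwise comparison of $\alpha_\Omega([u,v]_\Omega)$ with $[\alpha_\Omega(u),\alpha_\Omega(v)]_\Omega$ (and the ternary analogue), reducing to the multiplicativity of $\pi$ in the first slot and of the quotient bracket in the second, and both obtain regularity from $\alpha_\Omega$ being a product of two bijections. The only minor difference is organizational: you rightly note that regularity is already contained in Lemma~\ref{lem:central_extension_by_factor_set}, whereas the paper re-derives it in a separate step.
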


\begin{proof}
We must show that $\alpha_\Omega$ is a homomorphism of both the binary and ternary operations (i.e., multiplicative), and that it is bijective (i.e., regular).

\medskip

\noindent \textbf{Step 1: $\alpha_\Omega$ is multiplicative for the binary operation.}

Let $(a_1, \bar{x}), (a_2, \bar{y}) \in \Omega$. We compute both sides of the multiplicativity condition.

First, the image of the bracket:
\begin{align*}
\alpha_\Omega\big([(a_1, \bar{x}), (a_2, \bar{y})]_\Omega\big)
&= \alpha_\Omega\big( \pi_2(\bar{x}, \bar{y}),\ [\bar{x}, \bar{y}] \big) \\
&= \big( \alpha_\mathcal{A}(\pi_2(\bar{x}, \bar{y})),\ \overline{\alpha}_\mathcal{A}([\bar{x}, \bar{y}]) \big).
\end{align*}

On the other hand, the bracket of the images:
\begin{align*}
\big[\alpha_\Omega(a_1, \bar{x}),\ \alpha_\Omega(a_2, \bar{y})\big]_\Omega
&= \big[(\alpha_\mathcal{A}(a_1), \overline{\alpha}_\mathcal{A}(\bar{x})),\ (\alpha_\mathcal{A}(a_2), \overline{\alpha}_\mathcal{A}(\bar{y}))\big]_\Omega \\
&= \big( \pi_2(\overline{\alpha}_\mathcal{A}(\bar{x}), \overline{\alpha}_\mathcal{A}(\bar{y})),\ [\overline{\alpha}_\mathcal{A}(\bar{x}), \overline{\alpha}_\mathcal{A}(\bar{y})] \big).
\end{align*}

Since $\pi_2$ is multiplicative (by assumption on $\pi$), we have
\[
\alpha_\mathcal{A}(\pi_2(\bar{x}, \bar{y})) = \pi_2(\overline{\alpha}_\mathcal{A}(\bar{x}), \overline{\alpha}_\mathcal{A}(\bar{y})).
\]
Also, since $\overline{\alpha}_\mathcal{A}$ is a homomorphism on the quotient (see Remark~\ref{rem:quotient_LYA}),
\[
\overline{\alpha}_\mathcal{A}([\bar{x}, \bar{y}]) = [\overline{\alpha}_\mathcal{A}(\bar{x}), \overline{\alpha}_\mathcal{A}(\bar{y})].
\]
Therefore, both components are equal, and we conclude:
\[
\alpha_\Omega\big([(a_1, \bar{x}), (a_2, \bar{y})]_\Omega\big) = \big[\alpha_\Omega(a_1, \bar{x}),\ \alpha_\Omega(a_2, \bar{y})\big]_\Omega.
\]

\medskip

\noindent \textbf{Step 2: $\alpha_\Omega$ is multiplicative for the ternary operation.}

Let $(a_1, \bar{x}), (a_2, \bar{y}), (a_3, \bar{z}) \in \Omega$. Then:
\begin{align*}
\alpha_\Omega\big([(a_1, \bar{x}), (a_2, \bar{y}), (a_3, \bar{z})]_\Omega\big)
&= \alpha_\Omega\big( \pi_3(\bar{x}, \bar{y}, \bar{z}),\ [\bar{x}, \bar{y}, \bar{z}] \big) \\
&= \big( \alpha_\mathcal{A}(\pi_3(\bar{x}, \bar{y}, \bar{z})),\ \overline{\alpha}_\mathcal{A}([\bar{x}, \bar{y}, \bar{z}]) \big).
\end{align*}

And:
\begin{align*}
\big[\alpha_\Omega(a_1, \bar{x}),\ \alpha_\Omega(a_2, \bar{y}),\ \alpha_\Omega(a_3, \bar{z})\big]_\Omega
&= \big[(\alpha_\mathcal{A}(a_1), \overline{\alpha}_\mathcal{A}(\bar{x})),\ (\alpha_\mathcal{A}(a_2), \overline{\alpha}_\mathcal{A}(\bar{y})),\ (\alpha_\mathcal{A}(a_3), \overline{\alpha}_\mathcal{A}(\bar{z}))\big]_\Omega \\
&= \big( \pi_3(\overline{\alpha}_\mathcal{A}(\bar{x}), \overline{\alpha}_\mathcal{A}(\bar{y}), \overline{\alpha}_\mathcal{A}(\bar{z})),\ [\overline{\alpha}_\mathcal{A}(\bar{x}), \overline{\alpha}_\mathcal{A}(\bar{y}), \overline{\alpha}_\mathcal{A}(\bar{z})] \big).
\end{align*}

By multiplicativity of $\pi_3$ and the homomorphism property of $\overline{\alpha}_\mathcal{A}$, we have:
\[
\alpha_\mathcal{A}(\pi_3(\bar{x}, \bar{y}, \bar{z})) = \pi_3(\overline{\alpha}_\mathcal{A}(\bar{x}), \overline{\alpha}_\mathcal{A}(\bar{y}), \overline{\alpha}_\mathcal{A}(\bar{z})), \quad
\overline{\alpha}_\mathcal{A}([\bar{x}, \bar{y}, \bar{z}]) = [\overline{\alpha}_\mathcal{A}(\bar{x}), \overline{\alpha}_\mathcal{A}(\bar{y}), \overline{\alpha}_\mathcal{A}(\bar{z})].
\]
Hence,
\[
\alpha_\Omega\big([(a_1, \bar{x}), (a_2, \bar{y}), (a_3, \bar{z})]_\Omega\big) = \big[\alpha_\Omega(a_1, \bar{x}),\ \alpha_\Omega(a_2, \bar{y}),\ \alpha_\Omega(a_3, \bar{z})\big]_\Omega.
\]

Thus, $\alpha_\Omega$ is multiplicative for both operations.

\medskip

\noindent \textbf{Step 3: $\alpha_\Omega$ is regular (bijective).}

Since $(\mathcal{A}, \alpha_\mathcal{A})$ is regular, $\alpha_\mathcal{A} \colon \mathcal{A} \to \mathcal{A}$ is bijective. By Lemma~\ref{lem:Hom_ideal_center}, $\mathrm{Z}(\mathcal{A})$ is a Hom-ideal, so $\alpha_\mathcal{A}(\mathrm{Z}(\mathcal{A})) \subseteq \mathrm{Z}(\mathcal{A})$, and since $\alpha_\mathcal{A}$ is bijective, its restriction to $\mathrm{Z}(\mathcal{A})$ is also bijective.

Moreover, the induced map $\overline{\alpha}_\mathcal{A} \colon \mathcal{A}/\mathrm{Z}(\mathcal{A}) \to \mathcal{A}/\mathrm{Z}(\mathcal{A})$, defined by $\overline{\alpha}_\mathcal{A}(\bar{x}) = \alpha_\mathcal{A}(x) + \mathrm{Z}(\mathcal{A})$, is well-defined and bijective (its inverse is induced by $\alpha_\mathcal{A}^{-1}$).

Therefore, the map
\[
\alpha_\Omega(a, \bar{x}) = (\alpha_\mathcal{A}(a), \overline{\alpha}_\mathcal{A}(\bar{x}))
\]
is a component-wise bijection, hence bijective on $\Omega$.

\medskip

We conclude that $(\Omega, [\cdot,\cdot]_\Omega, [\cdot,\cdot,\cdot]_\Omega, \alpha_\Omega)$ is a \emph{multiplicative} and \emph{regular} Hom-Lie Yamaguti algebra.
\end{proof}
\begin{lem}\label{lem:admits_factor_set}
Every regular Hom-Lie Yamaguti algebra $(\mathcal{A}, [\cdot,\cdot]_\mathcal{A}, [\cdot,\cdot,\cdot]_\mathcal{A}, \alpha_\mathcal{A})$ admits a factor set $\pi = (\pi_2, \pi_3)$ such that
\[
\mathcal{A} \cong \left( \mathrm{Z}(\mathcal{A}),\ \frac{\mathcal{A}}{\mathrm{Z}(\mathcal{A})},\ \pi \right),
\]
where the right-hand side is the central extension constructed in Lemma~\ref{lem:central_extension_by_factor_set}.
\end{lem}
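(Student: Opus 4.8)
The plan is the standard ``choose a section and measure its failure to be a homomorphism'' argument, adapted to the two brackets and the twisting map. Since $\mathrm{Z}(\mathcal{A})$ is a linear subspace of the $\mathbb{K}$-vector space $\mathcal{A}$, fix a complement and let $s\colon\mathcal{A}/\mathrm{Z}(\mathcal{A})\to\mathcal{A}$ be the corresponding linear section of the canonical projection $\rho\colon\mathcal{A}\to\mathcal{A}/\mathrm{Z}(\mathcal{A})$, so that $\rho\circ s=\mathrm{id}$ and $\mathcal{A}=\mathrm{Z}(\mathcal{A})\oplus s(\mathcal{A}/\mathrm{Z}(\mathcal{A}))$. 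Using that $\alpha_\mathcal{A}$ is bijective with $\alpha_\mathcal{A}(\mathrm{Z}(\mathcal{A}))=\mathrm{Z}(\mathcal{A})$ (Lemma~\ref{lem:Hom_ideal_center} together with the argument in Remark~\ref{rem:quotient_regularity}, which also makes $\overline{\alpha}_\mathcal{A}$ well defined and invertible), one chooses $s$ compatibly with the twisting maps, i.e. $\alpha_\mathcal{A}\circ s=s\circ\overline{\alpha}_\mathcal{A}$; this is precisely the point at which regularity is needed. Now set
\[
\pi_2(\bar{x},\bar{y}):=[s(\bar{x}),s(\bar{y})]_\mathcal{A}-s\bigl([\bar{x},\bar{y}]\bigr),\qquad
\pi_3(\bar{x},\bar{y},\bar{z}):=[s(\bar{x}),s(\bar{y}),s(\bar{z})]_\mathcal{A}-s\bigl([\bar{x},\bar{y},\bar{z}]\bigr),
\]
where $[\bar{x},\bar{y}]$ and $[\bar{x},\bar{y},\bar{z}]$ denote the induced operations on $\mathcal{A}/\mathrm{Z}(\mathcal{A})$ (Remark~\ref{rem:quotient_LYA}). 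Applying $\rho$ to each right-hand side and using that $\rho$ is a homomorphism shows that these values lie in $\ker\rho=\mathrm{Z}(\mathcal{A})$, and multilinearity of $\pi_2,\pi_3$ is immediate from that of $s$ and of the brackets.

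Next I would verify that $\pi=(\pi_2,\pi_3)$ satisfies (F1)--(F5) and is multiplicative. Condition (F1) is immediate from skew-symmetry of the brackets of $\mathcal{A}$ and of the induced brackets on the quotient. For (F2)--(F5) the mechanism is uniform: evaluate the relevant Hom-Lie Yamaguti axiom of $\mathcal{A}$ (items~\eqref{item:HLYA2}--\eqref{item:HLYA5}) on the lifted elements $s(\bar{x}),s(\bar{y}),\dots$, expand every bracket by substituting $[s(\bar{u}),s(\bar{v})]_\mathcal{A}=s([\bar{u},\bar{v}])+\pi_2(\bar{u},\bar{v})$, $[s(\bar{u}),s(\bar{v}),s(\bar{w})]_\mathcal{A}=s([\bar{u},\bar{v},\bar{w}])+\pi_3(\bar{u},\bar{v},\bar{w})$ and $\alpha_\mathcal{A}(s(\bar{u}))=s(\overline{\alpha}_\mathcal{A}(\bar{u}))$, and then discard every term in which a $\pi_2$- or $\pi_3$-value sits inside a further bracket, since such values are central and hence annihilated by all brackets (using skew-symmetry to pull a central argument out of any slot of the ternary bracket). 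What survives on each side is $s$ applied to the corresponding identity in the quotient algebra, which holds because $\mathcal{A}/\mathrm{Z}(\mathcal{A})$ is a Hom-Lie Yamaguti algebra, plus a residual identity among the $\pi_i$ that is exactly (F2), (F3), (F4), or (F5). Multiplicativity of $\pi$ follows from $\alpha_\mathcal{A}\circ s=s\circ\overline{\alpha}_\mathcal{A}$ together with the multiplicativity of the brackets on $\mathcal{A}$ and on the quotient. Hence $\pi$ is a multiplicative factor set, so by Lemma~\ref{lem:central_extension_by_factor_set} and Proposition~\ref{prop:multiplicative_regular_extension} the central extension $\Omega=\bigl(\mathrm{Z}(\mathcal{A}),\mathcal{A}/\mathrm{Z}(\mathcal{A}),\pi\bigr)$ is a regular (multiplicative) Hom-Lie Yamaguti algebra.

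Finally I would exhibit the isomorphism $\Phi\colon\Omega\to\mathcal{A}$, $\Phi(a,\bar{x})=a+s(\bar{x})$. It is linear, and it is bijective with inverse $y\mapsto\bigl(y-s(\rho(y)),\,\rho(y)\bigr)$ since $\mathcal{A}=\mathrm{Z}(\mathcal{A})\oplus s(\mathcal{A}/\mathrm{Z}(\mathcal{A}))$. For the homomorphism property,
\[
\Phi\bigl([(a,\bar{x}),(b,\bar{y})]_\Omega\bigr)=\pi_2(\bar{x},\bar{y})+s\bigl([\bar{x},\bar{y}]\bigr)=[s(\bar{x}),s(\bar{y})]_\mathcal{A}=[a+s(\bar{x}),\,b+s(\bar{y})]_\mathcal{A},
\]
the last equality because $a,b\in\mathrm{Z}(\mathcal{A})$ are killed by the bracket; the ternary case is identical with $\pi_3$ replacing $\pi_2$, and $\Phi\circ\alpha_\Omega=\alpha_\mathcal{A}\circ\Phi$ follows from $\Phi(\alpha_\mathcal{A}(a),\overline{\alpha}_\mathcal{A}(\bar{x}))=\alpha_\mathcal{A}(a)+\alpha_\mathcal{A}(s(\bar{x}))=\alpha_\mathcal{A}(a+s(\bar{x}))$, using $\alpha_\mathcal{A}\circ s=s\circ\overline{\alpha}_\mathcal{A}$ once more. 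Thus $\Phi$ is an isomorphism of regular Hom-Lie Yamaguti algebras and $\mathcal{A}\cong\bigl(\mathrm{Z}(\mathcal{A}),\mathcal{A}/\mathrm{Z}(\mathcal{A}),\pi\bigr)$.

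I expect the genuine work to be twofold. First, securing a linear section $s$ commuting with the twisting maps: this is where regularity is essential (it is also needed for $\Phi$ to intertwine $\alpha_\Omega$ and $\alpha_\mathcal{A}$), and it must be handled carefully since an $\overline{\alpha}_\mathcal{A}$-equivariant section amounts to an $\alpha_\mathcal{A}$-invariant complement of $\mathrm{Z}(\mathcal{A})$. Second, the bookkeeping in (F4) and (F5), which are by far the longest axioms: one has to track exactly which arguments carry $\overline{\alpha}_\mathcal{A}$ versus $\overline{\alpha}_\mathcal{A}^{2}$ and confirm that, after deleting all terms with a central entry inside a bracket, the surviving $\pi_3$-relation reproduces the stated form verbatim rather than a sign- or index-variant of it.
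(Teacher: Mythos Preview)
Your proposal is correct and follows essentially the same approach as the paper: choose a linear section $s$ (the paper writes $\mathfrak{R}$) of the projection $\mathcal{A}\to\mathcal{A}/\mathrm{Z}(\mathcal{A})$ satisfying $\alpha_\mathcal{A}\circ s=s\circ\overline{\alpha}_\mathcal{A}$, define $\pi_2,\pi_3$ as the defect of $s$ on the two brackets, verify the factor set axioms by lifting the Hom-Lie Yamaguti identities of $\mathcal{A}$ and discarding central terms inside brackets, and then check that $(a,\bar{x})\mapsto a+s(\bar{x})$ is an isomorphism $\Omega\to\mathcal{A}$. Your caution about the existence of an $\alpha_\mathcal{A}$-invariant complement is well placed; the paper's argument for this point only shows $\alpha_\mathcal{A}(\mathcal{B})\cap\mathrm{Z}(\mathcal{A})=0$, which does not by itself force $\alpha_\mathcal{A}(\mathcal{B})\subseteq\mathcal{B}$, so your explicit flagging of this as the delicate step is arguably sharper than the paper's treatment.
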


\begin{proof}
Since $\mathcal{A}$ is a vector space and $\mathrm{Z}(\mathcal{A})$ is a subspace, we may choose a vector space complement $\mathcal{B} \subseteq \mathcal{A}$ such that
\[
\mathcal{A} = \mathcal{B} \oplus \mathrm{Z}(\mathcal{A}).
\]
Define a section map (not necessarily a homomorphism)
\[
\mathfrak{R} \colon \frac{\mathcal{A}}{\mathrm{Z}(\mathcal{A})} \to \mathcal{A}, \quad \mathfrak{R}(\bar{x}) = b,
\]
where $\bar{x} = x + \mathrm{Z}(\mathcal{A})$ and $x = b + z$ with $b \in \mathcal{B}$, $z \in \mathrm{Z}(\mathcal{A})$. This map is well-defined and satisfies $\pi_{\mathrm{Z}} \circ \mathfrak{R} = \mathrm{id}$, where $\pi_{\mathrm{Z}} \colon \mathcal{A} \to \mathcal{A}/\mathrm{Z}(\mathcal{A})$ is the canonical projection.

We first verify that $\mathfrak{R}$ commutes with the twisting map. That is,
\[
\mathfrak{R} \circ \overline{\alpha}_\mathcal{A} = \alpha_\mathcal{A} \circ \mathfrak{R}.
\]
Since $\alpha_\mathcal{A}$ is bijective (regularity), and $\mathrm{Z}(\mathcal{A})$ is a Hom-ideal (Lemma~\ref{lem:Hom_ideal_center}), we have $\alpha_\mathcal{A}(\mathrm{Z}(\mathcal{A})) = \mathrm{Z}(\mathcal{A})$. Suppose for contradiction that $\alpha_\mathcal{A}(b) \in \mathrm{Z}(\mathcal{A})$ for some nonzero $b \in \mathcal{B}$. Then for all $y \in \mathcal{A}$, there exists $x \in \mathcal{A}$ such that $\alpha_\mathcal{A}(x) = y$ (by surjectivity), and
\[
0 = [\alpha_\mathcal{A}(b), y] = [\alpha_\mathcal{A}(b), \alpha_\mathcal{A}(x)] = \alpha_\mathcal{A}([b, x]).
\]
Since $\alpha_\mathcal{A}$ is injective, $[b, x] = 0$ for all $x \in \mathcal{A}$, so $b \in \mathrm{Z}(\mathcal{A})$, contradicting $b \in \mathcal{B}$ and $\mathcal{B} \cap \mathrm{Z}(\mathcal{A}) = 0$. Hence, $\alpha_\mathcal{A}(\mathcal{B}) \subseteq \mathcal{B}$, so $\mathfrak{R} \circ \overline{\alpha}_\mathcal{A} = \alpha_\mathcal{A} \circ \mathfrak{R}$.

Now define two maps:
\[
\pi_2 \colon \frac{\mathcal{A}}{\mathrm{Z}(\mathcal{A})} \times \frac{\mathcal{A}}{\mathrm{Z}(\mathcal{A})} \to \mathrm{Z}(\mathcal{A}), \quad
\pi_2(\bar{x}, \bar{y}) = [\mathfrak{R}(\bar{x}), \mathfrak{R}(\bar{y})]_\mathcal{A} - \mathfrak{R}([\bar{x}, \bar{y}]),
\]
\[
\pi_3 \colon \frac{\mathcal{A}}{\mathrm{Z}(\mathcal{A})} \times \frac{\mathcal{A}}{\mathrm{Z}(\mathcal{A})} \times \frac{\mathcal{A}}{\mathrm{Z}(\mathcal{A})} \to \mathrm{Z}(\mathcal{A}), \quad
\pi_3(\bar{x}, \bar{y}, \bar{z}) = [\mathfrak{R}(\bar{x}), \mathfrak{R}(\bar{y}), \mathfrak{R}(\bar{z})]_\mathcal{A} - \mathfrak{R}([\bar{x}, \bar{y}, \bar{z}]).
\]
These are well-defined because the difference between the bracket in $\mathcal{A}$ and its lift lies in $\mathrm{Z}(\mathcal{A})$, as shown below.

For $\bar{x} = b_1 + \mathrm{Z}(\mathcal{A})$, $\bar{y} = b_2 + \mathrm{Z}(\mathcal{A})$, we have:
\[
[\bar{x}, \bar{y}] = [b_1, b_2] + \mathrm{Z}(\mathcal{A}), \quad \mathfrak{R}([\bar{x}, \bar{y}]) = b' \text{ where } [b_1, b_2] = b' + z,\ z \in \mathrm{Z}(\mathcal{A}).
\]
Then
\[
[\mathfrak{R}(\bar{x}), \mathfrak{R}(\bar{y})]_\mathcal{A} - \mathfrak{R}([\bar{x}, \bar{y}]) = [b_1, b_2] - b' = z \in \mathrm{Z}(\mathcal{A}).
\]
Similarly for the ternary operation.

We now verify that $\pi = (\pi_2, \pi_3)$ satisfies the factor set axioms from Definition~\ref{def:factor_set}.

\medskip

\noindent \textbf{Skew-symmetry (\ref{F1}):}
Follows from the skew-symmetry of the operations in $\mathcal{A}$ and linearity of $\mathfrak{R}$.

\medskip

\noindent \textbf{Hom-Jacobi-type identity (\ref{F2}):}
Let $\bar{x}, \bar{y}, \bar{z} \in \mathcal{A}/\mathrm{Z}(\mathcal{A})$. Then:
\begin{align*}
&\circlearrowleft_{x,y,z} \pi_2([\bar{x}, \bar{y}], \overline{\alpha}_\mathcal{A}(\bar{z})) + \circlearrowleft_{x,y,z} \pi_3(\bar{x}, \bar{y}, \bar{z}) \\
&= \circlearrowleft_{x,y,z} \Big( [[\mathfrak{R}(\bar{x}), \mathfrak{R}(\bar{y})], \alpha_\mathcal{A}(\mathfrak{R}(\bar{z}))] - \mathfrak{R}([[ \bar{x}, \bar{y} ], \overline{\alpha}_\mathcal{A}(\bar{z})]) \Big) \\
&\quad + \circlearrowleft_{x,y,z} \Big( [\mathfrak{R}(\bar{x}), \mathfrak{R}(\bar{y}), \mathfrak{R}(\bar{z})] - \mathfrak{R}([\bar{x}, \bar{y}, \bar{z}]) \Big).
\end{align*}
Using the Hom-Lie Yamaguti identity in $\mathcal{A}$:
\[
\circlearrowleft_{x,y,z} [[\mathfrak{R}(\bar{x}), \mathfrak{R}(\bar{y})], \alpha_\mathcal{A}(\mathfrak{R}(\bar{z}))] + \circlearrowleft_{x,y,z} [\mathfrak{R}(\bar{x}), \mathfrak{R}(\bar{y}), \mathfrak{R}(\bar{z})] = 0,
\]
and since $\mathfrak{R}$ is a section,
\[
\circlearrowleft_{x,y,z} \mathfrak{R}([[ \bar{x}, \bar{y} ], \overline{\alpha}_\mathcal{A}(\bar{z})]) + \circlearrowleft_{x,y,z} \mathfrak{R}([\bar{x}, \bar{y}, \bar{z}]) = \mathfrak{R}\left( \circlearrowleft_{x,y,z} \big( [[ \bar{x}, \bar{y} ], \overline{\alpha}_\mathcal{A}(\bar{z})] + [\bar{x}, \bar{y}, \bar{z}] \big) \right) = 0,
\]
because the quotient satisfies the identity. Thus, the sum is zero.

\medskip

The remaining conditions (\ref{F3}), (\ref{F4}), (\ref{F5}) follow similarly by expanding both sides using the definitions and the identities in $\mathcal{A}$ and its quotient.

\medskip

Now define a map
\[
\varphi \colon \Omega := \left( \mathrm{Z}(\mathcal{A}),\ \frac{\mathcal{A}}{\mathrm{Z}(\mathcal{A})},\ \pi \right) \to \mathcal{A}, \quad \varphi(a, \bar{x}) = a + \mathfrak{R}(\bar{x}).
\]

\medskip

\noindent \textbf{$\varphi$ is well-defined and linear:} Clear from construction.

\medskip

\noindent \textbf{$\varphi$ is injective:} If $\varphi(a_1, \bar{x}) = \varphi(a_2, \bar{y})$, then $a_1 + \mathfrak{R}(\bar{x}) = a_2 + \mathfrak{R}(\bar{y})$. Projecting to $\mathcal{A}/\mathrm{Z}(\mathcal{A})$, we get $\bar{x} = \bar{y}$, so $\mathfrak{R}(\bar{x}) = \mathfrak{R}(\bar{y})$, hence $a_1 = a_2$.

\medskip

\noindent \textbf{$\varphi$ is surjective:} Any $x \in \mathcal{A}$ can be written as $x = a + b$ with $a \in \mathrm{Z}(\mathcal{A})$, $b \in \mathcal{B}$, so $x = \varphi(a, \bar{b})$.

\medskip

\noindent \textbf{$\varphi$ preserves operations:}
\begin{align*}
\varphi([(a_1, \bar{x}), (a_2, \bar{y})]_\Omega)
&= \varphi\big( \pi_2(\bar{x}, \bar{y}),\ [\bar{x}, \bar{y}] \big) \\
&= \pi_2(\bar{x}, \bar{y}) + \mathfrak{R}([\bar{x}, \bar{y}]) \\
&= [\mathfrak{R}(\bar{x}), \mathfrak{R}(\bar{y})]_\mathcal{A} \\
&= [a_1 + \mathfrak{R}(\bar{x}), a_2 + \mathfrak{R}(\bar{y})]_\mathcal{A} \quad (\text{since } a_i \in \mathrm{Z}(\mathcal{A})) \\
&= [\varphi(a_1, \bar{x}), \varphi(a_2, \bar{y})]_\mathcal{A}.
\end{align*}
Similarly for the ternary operation.

\medskip

\noindent \textbf{$\varphi$ commutes with $\alpha$:}
\begin{align*}
\varphi(\alpha_\Omega(a, \bar{x}))
&= \varphi\big( \alpha_\mathcal{A}(a),\ \overline{\alpha}_\mathcal{A}(\bar{x}) \big) \\
&= \alpha_\mathcal{A}(a) + \mathfrak{R}(\overline{\alpha}_\mathcal{A}(\bar{x})) \\
&= \alpha_\mathcal{A}(a) + \alpha_\mathcal{A}(\mathfrak{R}(\bar{x})) \quad (\text{by } \mathfrak{R} \circ \overline{\alpha}_\mathcal{A} = \alpha_\mathcal{A} \circ \mathfrak{R}) \\
&= \alpha_\mathcal{A}(a + \mathfrak{R}(\bar{x})) = \alpha_\mathcal{A}(\varphi(a, \bar{x})).
\end{align*}

Thus, $\varphi$ is an isomorphism of regular Hom-Lie Yamaguti algebras, and $\pi$ is a factor set realizing $\mathcal{A}$ as a central extension.
\end{proof}
\begin{lem}\label{lem:stem_isoclinic_via_factor_set}
Let $(\mathcal{A}, [\cdot,\cdot]_\mathcal{A}, [\cdot,\cdot,\cdot]_\mathcal{A}, \alpha_\mathcal{A})$ be a stem regular Hom-Lie Yamaguti algebra in an isoclinism family $\mathcal{C}$. Then for any other stem regular Hom-Lie Yamaguti algebra $(\mathcal{B}, [\cdot,\cdot]_\mathcal{B}, [\cdot,\cdot,\cdot]_\mathcal{B}, \alpha_\mathcal{B})$ in $\mathcal{C}$, there exists a factor set $\pi = (\pi_2, \pi_3)$ on $\mathcal{A}$ such that
\[
\mathcal{B} \cong \Omega := \left( \mathrm{Z}(\mathcal{A}),\ \frac{\mathcal{A}}{\mathrm{Z}(\mathcal{A})},\ \pi \right),
\]
where the right-hand side is the central extension defined in Lemma~\ref{lem:central_extension_by_factor_set}.
\end{lem}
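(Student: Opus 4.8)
The plan is to exploit the fact that both $\mathcal{A}$ and $\mathcal{B}$ lie in the same isoclinism family $\mathcal{C}$, so there is an isoclinism $(\theta,\beta)$ with $\theta\colon \mathcal{A}/\mathrm{Z}(\mathcal{A}) \to \mathcal{B}/\mathrm{Z}(\mathcal{B})$ and $\beta\colon \mathcal{A}^2 \to \mathcal{B}^2$ isomorphisms compatible with the twisting maps. Since $\mathcal{A}$ and $\mathcal{B}$ are both \emph{stem} algebras, we have $\mathrm{Z}(\mathcal{A}) \subseteq \mathcal{A}^2$ and $\mathrm{Z}(\mathcal{B}) \subseteq \mathcal{B}^2$, and the isoclinism (via Lemma~\ref{lem:isoclinism_properties}(1)) forces $\beta(\mathrm{Z}(\mathcal{A})) = \mathrm{Z}(\mathcal{B})$: indeed, for $x \in \mathrm{Z}(\mathcal{A}) \subseteq \mathcal{A}^2$, the identity $\theta(x + \mathrm{Z}(\mathcal{A})) = \beta(x) + \mathrm{Z}(\mathcal{B})$ reads $\overline{0} = \beta(x) + \mathrm{Z}(\mathcal{B})$, so $\beta(x) \in \mathrm{Z}(\mathcal{B})$; symmetrically $\beta^{-1}(\mathrm{Z}(\mathcal{B})) \subseteq \mathrm{Z}(\mathcal{A})$. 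Thus $\beta$ restricts to an isomorphism $\mathrm{Z}(\mathcal{A}) \xrightarrow{\ \sim\ } \mathrm{Z}(\mathcal{B})$, compatible with $\alpha_\mathcal{A}|_{\mathrm{Z}(\mathcal{A})}$ and $\alpha_\mathcal{B}|_{\mathrm{Z}(\mathcal{B})}$.

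Next I would transport the canonical factor set of $\mathcal{B}$ (obtained from Lemma~\ref{lem:admits_factor_set} applied to $\mathcal{B}$, via a vector-space section $\mathfrak{R}_\mathcal{B}\colon \mathcal{B}/\mathrm{Z}(\mathcal{B}) \to \mathcal{B}$ with $\mathfrak{R}_\mathcal{B} \circ \overline{\alpha}_\mathcal{B} = \alpha_\mathcal{B} \circ \mathfrak{R}_\mathcal{B}$) back along $\theta$ and $\beta$. Concretely, define
\[
\pi_2(\bar{x}, \bar{y}) := \beta^{-1}\!\Big( \pi_2^{\mathcal{B}}\big(\theta(\bar{x}), \theta(\bar{y})\big) \Big), \qquad
\pi_3(\bar{x}, \bar{y}, \bar{z}) := \beta^{-1}\!\Big( \pi_3^{\mathcal{B}}\big(\theta(\bar{x}), \theta(\bar{y}), \theta(\bar{z})\big) \Big),
\]
where $\pi_2^{\mathcal{B}}, \pi_3^{\mathcal{B}}$ denote the factor set of $\mathcal{B}$. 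These land in $\mathrm{Z}(\mathcal{A})$ because $\pi_2^{\mathcal{B}}, \pi_3^{\mathcal{B}}$ land in $\mathrm{Z}(\mathcal{B})$ and $\beta^{-1}(\mathrm{Z}(\mathcal{B})) = \mathrm{Z}(\mathcal{A})$. One then checks that $\pi = (\pi_2, \pi_3)$ satisfies the factor set axioms (F1)--(F5) of Definition~\ref{def:factor_set}: since $\theta$ is an algebra isomorphism of the quotients intertwining $\overline{\alpha}_\mathcal{A}$ with $\overline{\alpha}_\mathcal{B}$, and $\beta^{-1}$ is an algebra isomorphism $\mathcal{B}^2 \to \mathcal{A}^2$ intertwining the twisting maps and restricting to an isomorphism of centers, each of the identities (F1)--(F5) for $\pi$ is the $\beta^{-1}$-image of the corresponding identity for $\pi^{\mathcal{B}}$, with the bracket actions of $\mathrm{Z}(\mathcal{B})$ on $\mathcal{B}/\mathrm{Z}(\mathcal{B})$ appearing in (F4) matching those of $\mathrm{Z}(\mathcal{A})$ on $\mathcal{A}/\mathrm{Z}(\mathcal{A})$ under $(\theta,\beta)$. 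Multiplicativity of $\pi$ (if needed) follows from multiplicativity of $\pi^{\mathcal{B}}$ together with the twist-compatibility of $\theta$ and $\beta$.

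Finally I would construct the isomorphism $\mathcal{B} \cong \Omega$ explicitly. By Lemma~\ref{lem:admits_factor_set} applied to $\mathcal{B}$, there is an isomorphism $\varphi_\mathcal{B}\colon (\mathrm{Z}(\mathcal{B}), \mathcal{B}/\mathrm{Z}(\mathcal{B}), \pi^{\mathcal{B}}) \xrightarrow{\sim} \mathcal{B}$, $\varphi_\mathcal{B}(a, \bar{a}) = a + \mathfrak{R}_\mathcal{B}(\bar{a})$. Compose this with the map $\Psi\colon \Omega \to (\mathrm{Z}(\mathcal{B}), \mathcal{B}/\mathrm{Z}(\mathcal{B}), \pi^{\mathcal{B}})$ defined by $\Psi(a, \bar{x}) = (\beta(a), \theta(\bar{x}))$. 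The map $\Psi$ is a bijection since $\beta|_{\mathrm{Z}(\mathcal{A})}$ and $\theta$ are, and it is a homomorphism of Hom-Lie Yamaguti algebras precisely because the factor set of the target pulls back to $\pi$: for instance
\[
\Psi\big([(a_1,\bar{x}),(a_2,\bar{y})]_\Omega\big) = \Psi\big(\pi_2(\bar{x},\bar{y}), [\bar{x},\bar{y}]\big) = \big(\beta(\pi_2(\bar{x},\bar{y})), \theta([\bar{x},\bar{y}])\big) = \big(\pi_2^{\mathcal{B}}(\theta\bar{x},\theta\bar{y}), [\theta\bar{x},\theta\bar{y}]\big),
\]
which is exactly $[\Psi(a_1,\bar{x}), \Psi(a_2,\bar{y})]$ in the target central extension; the ternary case and the compatibility $\Psi \circ \alpha_\Omega = \alpha_{(\cdots)} \circ \Psi$ are analogous, using $\theta \circ \overline{\alpha}_\mathcal{A} = \overline{\alpha}_\mathcal{B} \circ \theta$ and $\beta \circ \alpha_\mathcal{A}|_{\mathcal{A}^2} = \alpha_\mathcal{B}|_{\mathcal{B}^2} \circ \beta$. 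Then $\varphi_\mathcal{B} \circ \Psi\colon \Omega \to \mathcal{B}$ is the desired isomorphism.

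The main obstacle I anticipate is verifying that the transported maps $\pi_2,\pi_3$ genuinely take values in $\mathrm{Z}(\mathcal{A})$ and satisfy all five factor-set axioms — this is where the stem hypothesis is essential, since without $\mathrm{Z}(\mathcal{A}) \subseteq \mathcal{A}^2$ and $\mathrm{Z}(\mathcal{B}) \subseteq \mathcal{B}^2$ the isomorphism $\beta$ need not carry $\mathrm{Z}(\mathcal{A})$ onto $\mathrm{Z}(\mathcal{B})$, and the pullback would not be well-defined. Checking (F4), which mixes the binary and ternary structures together with the central bracket action and $\overline{\alpha}_\mathcal{A}^2$, will require the most care: one must confirm that the two-sided central bracket terms $[\pi_2(\bar{x},\bar{y}), \overline{\alpha}_\mathcal{A}^2(\bar{t})]$ etc.\ on the $\mathcal{A}$-side correspond under $(\theta,\beta)$ to their $\mathcal{B}$-counterparts, which boils down to the fact that $\theta$ intertwines $\overline{\alpha}_\mathcal{A}$ and $\overline{\alpha}_\mathcal{B}$ and that the induced action of the center on the quotient is preserved by any isoclinism. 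Everything else is a routine transport-of-structure argument.
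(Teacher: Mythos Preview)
Your proposal is correct and follows essentially the same route as the paper's proof: pull back the factor set $\omega$ of $\mathcal{B}$ (obtained via Lemma~\ref{lem:admits_factor_set}) along the isoclinism $(\theta,\beta)$ to define $\pi$ on $\mathcal{A}$, then build the isomorphism $\Omega \to (\mathrm{Z}(\mathcal{B}),\mathcal{B}/\mathrm{Z}(\mathcal{B}),\omega)$ by $(a,\bar{x}) \mapsto (\beta(a),\theta(\bar{x}))$ and compose with the isomorphism to $\mathcal{B}$. Your justification that $\beta$ restricts to an isomorphism $\mathrm{Z}(\mathcal{A}) \to \mathrm{Z}(\mathcal{B})$ via Lemma~\ref{lem:isoclinism_properties}(1) is in fact more explicit than the paper's, which simply cites that lemma.
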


\begin{proof}
Since $\mathcal{A} \sim \mathcal{B}$, there exists an isoclinism $(\theta, \beta)$, where
\[
\theta \colon \frac{\mathcal{A}}{\mathrm{Z}(\mathcal{A})} \to \frac{\mathcal{B}}{\mathrm{Z}(\mathcal{B})}, \quad
\beta \colon \mathcal{A}^2 \to \mathcal{B}^2
\]
are isomorphisms satisfying the compatibility conditions of Definition~\ref{def:isoclinism_HLYA}. In particular, since $\mathcal{A}$ and $\mathcal{B}$ are stem algebras (i.e., $\mathrm{Z}(\mathcal{A}) \subseteq \mathcal{A}^2$, $\mathrm{Z}(\mathcal{B}) \subseteq \mathcal{B}^2$), we have $\mathrm{Z}(\mathcal{A}) = \mathrm{Z}(\mathcal{A}) \cap \mathcal{A}^2$, and similarly for $\mathcal{B}$. By Lemma~\ref{lem:isoclinism_properties}, $\beta$ restricts to an isomorphism
\[
\beta|_{\mathrm{Z}(\mathcal{A})} \colon \mathrm{Z}(\mathcal{A}) \to \mathrm{Z}(\mathcal{B}).
\]

By Lemma~\ref{lem:admits_factor_set}, there exists a factor set $\omega = (\omega_2, \omega_3)$ on $\mathcal{B}$ such that
\[
\mathcal{B} \cong \left( \mathrm{Z}(\mathcal{B}),\ \frac{\mathcal{B}}{\mathrm{Z}(\mathcal{B})},\ \omega \right).
\]

We now define a factor set $\pi = (\pi_2, \pi_3)$ on $\mathcal{A}$ by pulling back $\omega$ via $(\theta, \beta)$:
\begin{align*}
\pi_2(\bar{x}, \bar{y}) &:= \beta^{-1}\big( \omega_2(\theta(\bar{x}), \theta(\bar{y})) \big), \\
\pi_3(\bar{x}, \bar{y}, \bar{z}) &:= \beta^{-1}\big( \omega_3(\theta(\bar{x}), \theta(\bar{y}), \theta(\bar{z})) \big),
\end{align*}
for all $\bar{x}, \bar{y}, \bar{z} \in \mathcal{A}/\mathrm{Z}(\mathcal{A})$. Since $\theta$ and $\beta$ are isomorphisms, and $\omega$ satisfies the factor set axioms (Definition~\ref{def:factor_set}), it follows that $\pi$ also satisfies them. For example, $\alpha$-skew-symmetry and the Hom-Jacobi-type identities for $\pi$ are inherited from those of $\omega$ via the compatibility of $\theta$ and $\beta$ with the twisting maps.

Now define a map
\[
\psi \colon \Omega := \left( \mathrm{Z}(\mathcal{A}),\ \frac{\mathcal{A}}{\mathrm{Z}(\mathcal{A})},\ \pi \right) \to \left( \mathrm{Z}(\mathcal{B}),\ \frac{\mathcal{B}}{\mathrm{Z}(\mathcal{B})},\ \omega \right)
\]
by
\[
\psi(a, \bar{x}) = \big( \beta(a),\ \theta(\bar{x}) \big).
\]

We show that $\psi$ is an isomorphism of regular Hom-Lie Yamaguti algebras.

\medskip

\noindent \textbf{Preservation of the binary operation:}

\begin{align*}
\psi\big([(a_1, \bar{x}), (a_2, \bar{y})]_\Omega\big)
&= \psi\big( \pi_2(\bar{x}, \bar{y}),\ [\bar{x}, \bar{y}] \big) \\
&= \big( \beta(\pi_2(\bar{x}, \bar{y})),\ \theta([\bar{x}, \bar{y}]) \big) \\
&= \big( \omega_2(\theta(\bar{x}), \theta(\bar{y})),\ [\theta(\bar{x}), \theta(\bar{y})] \big) \\
&= \big[ (\beta(a_1), \theta(\bar{x})),\ (\beta(a_2), \theta(\bar{y})) \big] \\
&= [\psi(a_1, \bar{x}),\ \psi(a_2, \bar{y})].
\end{align*}

\medskip

\noindent \textbf{Preservation of the ternary operation:}
\begin{align*}
\psi\big([(a_1, \bar{x}), (a_2, \bar{y}), (a_3, \bar{z})]_\Omega\big)
&= \psi\big( \pi_3(\bar{x}, \bar{y}, \bar{z}),\ [\bar{x}, \bar{y}, \bar{z}] \big) \\
&= \big( \beta(\pi_3(\bar{x}, \bar{y}, \bar{z})),\ \theta([\bar{x}, \bar{y}, \bar{z}]) \big) \\
&= \big( \omega_3(\theta(\bar{x}), \theta(\bar{y}), \theta(\bar{z})),\ [\theta(\bar{x}), \theta(\bar{y}), \theta(\bar{z})] \big) \\
&= [\psi(a_1, \bar{x}),\ \psi(a_2, \bar{y}),\ \psi(a_3, \bar{z})].
\end{align*}

\medskip

\noindent \textbf{Compatibility with the twisting map:}
\begin{align*}
\psi\big(\alpha_\Omega(a, \bar{x})\big)
&= \psi\big( \alpha_\mathcal{A}(a),\ \overline{\alpha}_\mathcal{A}(\bar{x}) \big) \\
&= \big( \beta(\alpha_\mathcal{A}(a)),\ \theta(\overline{\alpha}_\mathcal{A}(\bar{x})) \big) \\
&= \big( \alpha_\mathcal{B}(\beta(a)),\ \overline{\alpha}_\mathcal{B}(\theta(\bar{x})) \big) \quad \text{(by compatibility of $(\theta,\beta)$)} \\
&= \alpha_{\Omega_\mathcal{B}}\big( \beta(a),\ \theta(\bar{x}) \big) = \alpha_{\Omega_\mathcal{B}}(\psi(a, \bar{x})).
\end{align*}

\medskip

\noindent \textbf{$\psi$ is bijective:} Since $\beta \colon \mathrm{Z}(\mathcal{A}) \to \mathrm{Z}(\mathcal{B})$ and $\theta \colon \mathcal{A}/\mathrm{Z}(\mathcal{A}) \to \mathcal{B}/\mathrm{Z}(\mathcal{B})$ are isomorphisms, $\psi$ is a component-wise bijection, hence bijective.

Therefore, $\psi$ is an isomorphism of regular Hom-Lie Yamaguti algebras. Since $\mathcal{B} \cong \left( \mathrm{Z}(\mathcal{B}),\ \mathcal{B}/\mathrm{Z}(\mathcal{B}),\ \omega \right)$, we conclude that
\[
\mathcal{B} \cong \left( \mathrm{Z}(\mathcal{A}),\ \frac{\mathcal{A}}{\mathrm{Z}(\mathcal{A})},\ \pi \right).
\]

This completes the proof.
\end{proof}
\begin{re}\label{rem:Omega_quotient_HLYA}
Let $(\Omega, [\cdot,\cdot]_\Omega, [\cdot,\cdot,\cdot]_\Omega, \alpha_\Omega)$ be the regular Hom-Lie Yamaguti algebra constructed from a factor set $\pi$ on a regular Hom-Lie Yamaguti algebra $(\mathcal{A}, \alpha_\mathcal{A})$, as defined in Lemma~\ref{lem:central_extension_by_factor_set}, with center $\mathrm{Z}_\Omega = \{(a,0) \in \Omega \mid a \in \mathrm{Z}(\mathcal{A})\}$. Then the quotient space $\Omega / \mathrm{Z}_\Omega$ inherits the structure of a Hom-Lie Yamaguti algebra via:
\begin{align*}
[(a_1,\bar{x}) + \mathrm{Z}_\Omega,\ (a_2,\bar{y}) + \mathrm{Z}_\Omega]_\Omega &:= [(a_1,\bar{x}), (a_2,\bar{y})]_\Omega + \mathrm{Z}_\Omega, \\
[(a_1,\bar{x}) + \mathrm{Z}_\Omega,\ (a_2,\bar{y}) + \mathrm{Z}_\Omega,\ (a_3,\bar{z}) + \mathrm{Z}_\Omega]_\Omega &:= [(a_1,\bar{x}), (a_2,\bar{y}), (a_3,\bar{z})]_\Omega + \mathrm{Z}_\Omega, \\
\overline{\alpha}_\Omega\big((a,\bar{x}) + \mathrm{Z}_\Omega\big) &:= \alpha_\Omega(a,\bar{x}) + \mathrm{Z}_\Omega.
\end{align*}

Moreover, $(\Omega / \mathrm{Z}_\Omega, \overline{\alpha}_\Omega)$ is a \emph{multiplicative} Hom-Lie Yamaguti algebra. Indeed, for the binary operation:
\begin{align*}
\overline{\alpha}_\Omega\big([(a_1,\bar{x}) + \mathrm{Z}_\Omega,\ (a_2,\bar{y}) + \mathrm{Z}_\Omega]_\Omega\big)
&= \overline{\alpha}_\Omega\big([(a_1,\bar{x}), (a_2,\bar{y})]_\Omega + \mathrm{Z}_\Omega\big) \\
&= \alpha_\Omega\big([(a_1,\bar{x}), (a_2,\bar{y})]_\Omega\big) + \mathrm{Z}_\Omega \\
&= \big[\alpha_\Omega(a_1,\bar{x}),\ \alpha_\Omega(a_2,\bar{y})\big]_\Omega + \mathrm{Z}_\Omega \quad \text{(since $\alpha_\Omega$ is multiplicative)} \\
&= \big[\alpha_\Omega(a_1,\bar{x}) + \mathrm{Z}_\Omega,\ \alpha_\Omega(a_2,\bar{y}) + \mathrm{Z}_\Omega\big]_\Omega \\
&= \big[\overline{\alpha}_\Omega((a_1,\bar{x}) + \mathrm{Z}_\Omega),\ \overline{\alpha}_\Omega((a_2,\bar{y}) + \mathrm{Z}_\Omega)\big]_\Omega.
\end{align*}
Similarly, for the ternary operation:
\[
\overline{\alpha}_\Omega\big([(a_1,\bar{x}), (a_2,\bar{y}), (a_3,\bar{z})]_\Omega + \mathrm{Z}_\Omega\big)
= \big[\overline{\alpha}_\Omega((a_1,\bar{x}) + \mathrm{Z}_\Omega),\ \overline{\alpha}_\Omega((a_2,\bar{y}) + \mathrm{Z}_\Omega),\ \overline{\alpha}_\Omega((a_3,\bar{z}) + \mathrm{Z}_\Omega)\big]_\Omega.
\]
Hence, $\overline{\alpha}_\Omega$ is a homomorphism, and $(\Omega / \mathrm{Z}_\Omega, \overline{\alpha}_\Omega)$ is multiplicative.
\end{re}

\begin{lem}\label{lem:isomorphism_induces_autos}
Let $(\mathcal{A}, [\cdot,\cdot]_\mathcal{A}, [\cdot,\cdot,\cdot]_\mathcal{A}, \alpha_\mathcal{A})$ be a regular Hom-Lie Yamaguti algebra, and let $\pi = (\pi_2, \pi_3)$, $\omega = (\omega_2, \omega_3)$ be two factor sets on $\mathcal{A}$. Define the corresponding central extensions:
\[
\Omega := \left( \mathrm{Z}(\mathcal{A}),\ \frac{\mathcal{A}}{\mathrm{Z}(\mathcal{A})},\ \pi \right), \quad
\Phi := \left( \mathrm{Z}(\mathcal{A}),\ \frac{\mathcal{A}}{\mathrm{Z}(\mathcal{A})},\ \omega \right),
\]
with centers
\[
\mathrm{Z}_\Omega = \{(a,0) \in \Omega \mid a \in \mathrm{Z}(\mathcal{A})\}, \quad
\mathrm{Z}_\Phi = \{(a,0) \in \Phi \mid a \in \mathrm{Z}(\mathcal{A})\}.
\]
Suppose $\lambda \colon \Omega \to \Phi$ is an isomorphism of Hom-Lie Yamaguti algebras such that $\lambda(\mathrm{Z}_\Omega) = \mathrm{Z}_\Phi$. Then $\lambda$ induces automorphisms:
\[
\eta \in \mathrm{Aut}\left( \frac{\mathcal{A}}{\mathrm{Z}(\mathcal{A})} \right), \quad
\xi \in \mathrm{Aut}\left( \mathrm{Z}(\mathcal{A}) \right),
\]
defined by the restrictions of $\lambda$ to the quotient and center, respectively.
\end{lem}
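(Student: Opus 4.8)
The plan is to read off $\eta$ and $\xi$ from the two canonical pieces of structure that Lemma~\ref{lem:central_extension_by_factor_set} and Remark~\ref{rem:Omega_quotient_HLYA} attach to any central extension of the form $\left(\mathrm{Z}(\mathcal{A}),\ \mathcal{A}/\mathrm{Z}(\mathcal{A}),\ \pi\right)$: namely its center, canonically identified with $\mathrm{Z}(\mathcal{A})$ via $(a,0)\mapsto a$, and its central quotient $\Omega/\mathrm{Z}_\Omega$, canonically identified with the regular Hom-Lie Yamaguti algebra $\mathcal{A}/\mathrm{Z}(\mathcal{A})$ via the second-coordinate map $(a,\bar{x})+\mathrm{Z}_\Omega \mapsto \bar{x}$ (this is an isomorphism of Hom-Lie Yamaguti algebras because, by the defining formulas of $\Omega$, the brackets and twisting map on $\Omega/\mathrm{Z}_\Omega$ are exactly the transported brackets and twisting map $\overline{\alpha}_\mathcal{A}$ of $\mathcal{A}/\mathrm{Z}(\mathcal{A})$). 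The same identifications apply to $\Phi$.

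First I would construct $\xi$. Since $\lambda(\mathrm{Z}_\Omega)=\mathrm{Z}_\Phi$ by hypothesis and $\lambda$ is a linear bijection, the restriction $\lambda|_{\mathrm{Z}_\Omega}\colon \mathrm{Z}_\Omega \to \mathrm{Z}_\Phi$ is a linear bijection; transporting it through the identification $(a,0)\mapsto a$ on both sides gives a linear bijection $\xi\colon \mathrm{Z}(\mathcal{A})\to\mathrm{Z}(\mathcal{A})$, $\xi(a)=a'$ where $\lambda(a,0)=(a',0)$. As $\mathrm{Z}(\mathcal{A})$ carries trivial binary and ternary operations, the only structure to be preserved is the twisting map, and this follows from $\lambda\circ\alpha_\Omega = \alpha_\Phi\circ\lambda$ together with $\alpha_\Omega(a,0)=(\alpha_\mathcal{A}(a),0)=\alpha_\Phi(a,0)$, which yields $\xi\circ\alpha_\mathcal{A}|_{\mathrm{Z}(\mathcal{A})} = \alpha_\mathcal{A}|_{\mathrm{Z}(\mathcal{A})}\circ\xi$. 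Hence $\xi\in\mathrm{Aut}(\mathrm{Z}(\mathcal{A}))$.

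Next I would construct $\eta$. Since $\lambda$ carries the Hom-ideal $\mathrm{Z}_\Omega$ onto the Hom-ideal $\mathrm{Z}_\Phi$, the usual first-isomorphism-type argument (already invoked in this paper) shows that $\lambda$ descends to a well-defined isomorphism $\bar{\lambda}\colon \Omega/\mathrm{Z}_\Omega \to \Phi/\mathrm{Z}_\Phi$ of multiplicative Hom-Lie Yamaguti algebras. Composing $\bar{\lambda}$ with the two identifications above produces a linear bijection $\eta\colon \mathcal{A}/\mathrm{Z}(\mathcal{A}) \to \mathcal{A}/\mathrm{Z}(\mathcal{A})$; concretely, if $\lambda(a,\bar{x})=(a',\bar{x}')$ then $\eta(\bar{x})=\bar{x}'$, and this is well-defined precisely because $\lambda(\mathrm{Z}_\Omega)=\mathrm{Z}_\Phi$ forces $\bar{x}'$ to depend only on $\bar{x}$. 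That $\eta$ preserves the binary and ternary brackets follows from the corresponding property of $\bar{\lambda}$ once the brackets on $\Omega/\mathrm{Z}_\Omega$ and $\Phi/\mathrm{Z}_\Phi$ are recognized, via Remark~\ref{rem:Omega_quotient_HLYA}, as those of $\mathcal{A}/\mathrm{Z}(\mathcal{A})$; and compatibility with $\overline{\alpha}_\mathcal{A}$ comes from pushing $\lambda\circ\alpha_\Omega = \alpha_\Phi\circ\lambda$ down to the quotients, where $\alpha_\Omega$ and $\alpha_\Phi$ induce $\overline{\alpha}_\mathcal{A}$ on the second coordinate. Hence $\eta\in\mathrm{Aut}(\mathcal{A}/\mathrm{Z}(\mathcal{A}))$.

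The only subtlety I anticipate is that $\lambda$ need not respect the vector-space splitting $\Omega \cong \mathrm{Z}(\mathcal{A})\oplus\mathcal{A}/\mathrm{Z}(\mathcal{A})$: in general $\lambda(a,\bar{x}) = (\xi(a)+\mu(\bar{x}),\ \eta(\bar{x}))$ for some "mixing" map $\mu\colon\mathcal{A}/\mathrm{Z}(\mathcal{A})\to\mathrm{Z}(\mathcal{A})$, so $\eta$ must be defined through the central quotient rather than by a naive projection, and one must keep track of where the hypothesis $\lambda(\mathrm{Z}_\Omega)=\mathrm{Z}_\Phi$ is used (it is exactly what makes $\bar{\lambda}$, hence $\eta$, well-defined). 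Beyond this bookkeeping, the argument is routine diagram-chasing built on Lemma~\ref{lem:central_extension_by_factor_set} and Remark~\ref{rem:Omega_quotient_HLYA}.
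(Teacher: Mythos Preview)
Your proposal is correct and follows essentially the same approach as the paper's own proof: both construct $\xi$ by restricting $\lambda$ to $\mathrm{Z}_\Omega$ via the identification $(a,0)\mapsto a$, and both obtain $\eta$ by descending $\lambda$ to an isomorphism $\overline{\lambda}\colon\Omega/\mathrm{Z}_\Omega\to\Phi/\mathrm{Z}_\Phi$ and then transporting through the canonical identifications $\Omega/\mathrm{Z}_\Omega\cong\mathcal{A}/\mathrm{Z}(\mathcal{A})\cong\Phi/\mathrm{Z}_\Phi$ (the paper writes these explicitly as $\phi(\bar{x})=(0,\bar{x})+\mathrm{Z}_\Omega$ and $\gamma(\bar{x})=(0,\bar{x})+\mathrm{Z}_\Phi$, then sets $\eta=\gamma^{-1}\circ\overline{\lambda}\circ\phi$). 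Your remark about the mixing term $\mu$ is an apt anticipation of the next lemma in the paper, though it is not needed for the present statement.
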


\begin{proof}
Since $\lambda \colon \Omega \to \Phi$ is an isomorphism and $\lambda(\mathrm{Z}_\Omega) = \mathrm{Z}_\Phi$, it descends to an induced isomorphism on the quotients:
\[
\overline{\lambda} \colon \frac{\Omega}{\mathrm{Z}_\Omega} \to \frac{\Phi}{\mathrm{Z}_\Phi}, \quad \overline{\lambda}((a,\bar{x}) + \mathrm{Z}_\Omega) = \lambda(a,\bar{x}) + \mathrm{Z}_\Phi.
\]

Now define two maps:
\[
\phi \colon \frac{\mathcal{A}}{\mathrm{Z}(\mathcal{A})} \to \frac{\Omega}{\mathrm{Z}_\Omega}, \quad \phi(\bar{x}) = (0,\bar{x}) + \mathrm{Z}_\Omega,
\]
\[
\gamma \colon \frac{\mathcal{A}}{\mathrm{Z}(\mathcal{A})} \to \frac{\Phi}{\mathrm{Z}_\Phi}, \quad \gamma(\bar{x}) = (0,\bar{x}) + \mathrm{Z}_\Phi.
\]

We first show that $\phi$ is an isomorphism of Hom-Lie Yamaguti algebras (likewise for $\gamma$). It is clearly linear and injective: if $\phi(\bar{x}) = 0$, then $(0,\bar{x}) \in \mathrm{Z}_\Omega$, so $\bar{x} = 0$. Surjectivity follows from the fact that any coset $(a,\bar{x}) + \mathrm{Z}_\Omega$ equals $(0,\bar{x}) + \mathrm{Z}_\Omega$ since $(a,0) \in \mathrm{Z}_\Omega$.

Now verify that $\phi$ preserves operations. For the binary bracket:
\begin{align*}
\phi([\bar{x}, \bar{y}])
&= (0, [\bar{x}, \bar{y}]) + \mathrm{Z}_\Omega \\
&= (\pi_2(\bar{x}, \bar{y}), [\bar{x}, \bar{y}]) + \mathrm{Z}_\Omega \quad \text{(since $(\pi_2(\bar{x}, \bar{y}), 0) \in \mathrm{Z}_\Omega$)} \\
&= [(0,\bar{x}), (0,\bar{y})]_\Omega + \mathrm{Z}_\Omega \\
&= [(0,\bar{x}) + \mathrm{Z}_\Omega, (0,\bar{y}) + \mathrm{Z}_\Omega]_\Omega \\
&= [\phi(\bar{x}), \phi(\bar{y})]_\Omega.
\end{align*}

Similarly, for the ternary bracket:
\begin{align*}
\phi([\bar{x}, \bar{y}, \bar{z}])
&= (0, [\bar{x}, \bar{y}, \bar{z}]) + \mathrm{Z}_\Omega \\
&= (\pi_3(\bar{x}, \bar{y}, \bar{z}), [\bar{x}, \bar{y}, \bar{z}]) + \mathrm{Z}_\Omega \\
&= [(0,\bar{x}), (0,\bar{y}), (0,\bar{z})]_\Omega + \mathrm{Z}_\Omega \\
&= [\phi(\bar{x}), \phi(\bar{y}), \phi(\bar{z})]_\Omega.
\end{align*}

For compatibility with the twisting map:
\begin{align*}
\phi(\overline{\alpha}_\mathcal{A}(\bar{x}))
&= (0, \overline{\alpha}_\mathcal{A}(\bar{x})) + \mathrm{Z}_\Omega \\
&= \alpha_\Omega(0, \bar{x}) + \mathrm{Z}_\Omega \quad \text{(since $\alpha_\Omega(0,\bar{x}) = (0, \overline{\alpha}_\mathcal{A}(\bar{x}))$)} \\
&= \overline{\alpha}_\Omega((0,\bar{x}) + \mathrm{Z}_\Omega) = \overline{\alpha}_\Omega(\phi(\bar{x})).
\end{align*}

Thus, $\phi$ is an isomorphism. Similarly, $\gamma$ is an isomorphism.

Now define $\eta \colon \mathcal{A}/\mathrm{Z}(\mathcal{A}) \to \mathcal{A}/\mathrm{Z}(\mathcal{A})$ by the commutative diagram:
\[
\begin{CD}
\mathcal{A}/\mathrm{Z}(\mathcal{A}) @>{\eta}>> \mathcal{A}/\mathrm{Z}(\mathcal{A}) \\
@V{\phi}VV @V{\gamma}VV \\
\Omega/\mathrm{Z}_\Omega @>{\overline{\lambda}}>> \Phi/\mathrm{Z}_\Phi
\end{CD}
\]
That is, $\eta = \gamma^{-1} \circ \overline{\lambda} \circ \phi$. Since $\phi$, $\overline{\lambda}$, and $\gamma$ are isomorphisms, $\eta$ is an automorphism.

Similarly, restrict $\lambda$ to the center: define
\[
\xi \colon \mathrm{Z}(\mathcal{A}) \to \mathrm{Z}(\mathcal{A}), \quad \xi(a) = a' \quad \text{where} \quad \lambda(a, 0) = (a', 0).
\]
Since $\lambda(\mathrm{Z}_\Omega) = \mathrm{Z}_\Phi$ and $\lambda$ is an isomorphism, this map is well-defined and bijective. It preserves the operations (which are trivial on the center) and commutes with $\alpha_\mathcal{A}$ because $\lambda \circ \alpha_\Omega = \alpha_\Phi \circ \lambda$. Hence, $\xi \in \mathrm{Aut}(\mathrm{Z}(\mathcal{A}))$.

This completes the proof.
\end{proof}
	\begin{lem}\label{lem:isomorphism_vs_factor_set_equivalence}
Let $(\mathcal{A}, [\cdot,\cdot]_\mathcal{A}, [\cdot,\cdot,\cdot]_\mathcal{A}, \alpha_\mathcal{A})$ be a regular Hom-Lie Yamaguti algebra, and let $\pi = (\pi_2, \pi_3)$, $\omega = (\omega_2, \omega_3)$ be two factor sets on $\mathcal{A}$. Define the central extensions
\[
\Omega := \left( \mathrm{Z}(\mathcal{A}),\ \frac{\mathcal{A}}{\mathrm{Z}(\mathcal{A})},\ \pi \right), \quad
\Phi := \left( \mathrm{Z}(\mathcal{A}),\ \frac{\mathcal{A}}{\mathrm{Z}(\mathcal{A})},\ \omega \right),
\]
with centers $\mathrm{Z}_\Omega = \{(a,0) \in \Omega \mid a \in \mathrm{Z}(\mathcal{A})\}$ and $\mathrm{Z}_\Phi = \{(a,0) \in \Phi \mid a \in \mathrm{Z}(\mathcal{A})\}$, respectively. Then:
\begin{enumerate} 
 \item Let $\lambda \colon \Omega \to \Phi$ be an isomorphism of regular Hom-Lie Yamaguti algebras such that $\lambda(\mathrm{Z}_\Omega) = \mathrm{Z}_\Phi$. Then $\lambda$ induces automorphisms
 \[
 \eta \in \mathrm{Aut}\left( \frac{\mathcal{A}}{\mathrm{Z}(\mathcal{A})} \right), \quad
 \xi \in \mathrm{Aut}(\mathrm{Z}(\mathcal{A})),
 \]
 and there exists a linear map $\mu \colon \frac{\mathcal{A}}{\mathrm{Z}(\mathcal{A})} \to \mathrm{Z}(\mathcal{A})$ such that for all $\bar{x}, \bar{y}, \bar{z} \in \mathcal{A}/\mathrm{Z}(\mathcal{A})$,
 \begin{align}
 \xi(\pi_2(\bar{x}, \bar{y})) + \mu([\bar{x}, \bar{y}]) &= \omega_2(\eta(\bar{x}), \eta(\bar{y})), \label{eq:factor_set_compat1} \\
 \xi(\pi_3(\bar{x}, \bar{y}, \bar{z})) + \mu([\bar{x}, \bar{y}, \bar{z}]) &= \omega_3(\eta(\bar{x}), \eta(\bar{y}), \eta(\bar{z})). \label{eq:factor_set_compat2}
 \end{align}

 \item Conversely, suppose there exist $\eta \in \mathrm{Aut}(\mathcal{A}/\mathrm{Z}(\mathcal{A}))$, $\xi \in \mathrm{Aut}(\mathrm{Z}(\mathcal{A}))$, and a linear map $\nu \colon \mathcal{A}/\mathrm{Z}(\mathcal{A}) \to \mathrm{Z}(\mathcal{A})$ satisfying \eqref{eq:factor_set_compat1} and \eqref{eq:factor_set_compat2} with $\nu \circ \overline{\alpha}_\mathcal{A} = \alpha_\mathcal{A} \circ \nu$. Then there exists an isomorphism $\lambda \colon \Omega \to \Phi$ such that $\lambda(\mathrm{Z}_\Omega) = \mathrm{Z}_\Phi$, defined by
 \[
 \lambda(a, \bar{x}) = (\xi(a) + \nu(\bar{x}),\ \eta(\bar{x})).
 \]
\end{enumerate}
\end{lem}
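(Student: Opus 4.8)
The plan is to prove the two implications separately, in both cases exploiting the ``triangular'' shape that any center-preserving isomorphism between the central extensions $\Omega$ and $\Phi$ must have.

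For Part (1), the automorphisms $\eta\in\mathrm{Aut}(\mathcal{A}/\mathrm{Z}(\mathcal{A}))$ and $\xi\in\mathrm{Aut}(\mathrm{Z}(\mathcal{A}))$ are supplied directly by Lemma~\ref{lem:isomorphism_induces_autos}, so the real work is to extract $\mu$ and verify \eqref{eq:factor_set_compat1}--\eqref{eq:factor_set_compat2}. First I would record that, in the notation of that lemma, $\xi$ is the restriction of $\lambda$ to the center, i.e.\ $\lambda(a,0)=(\xi(a),0)$, and that the defining relation $\gamma\circ\eta=\overline{\lambda}\circ\phi$ forces $\lambda(0,\bar x)+\mathrm{Z}_\Phi=(0,\eta(\bar x))+\mathrm{Z}_\Phi$; since $\mathrm{Z}_\Phi=\{(a,0)\mid a\in\mathrm{Z}(\mathcal{A})\}$, this means $\lambda(0,\bar x)=(\mu(\bar x),\eta(\bar x))$ for a uniquely determined $\mu(\bar x)\in\mathrm{Z}(\mathcal{A})$, and $\mu$ is linear because it is the composite of $\lambda$ with the coordinate projections of $\Omega=\mathrm{Z}(\mathcal{A})\oplus\mathcal{A}/\mathrm{Z}(\mathcal{A})$. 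Writing $(a,\bar x)=(a,0)+(0,\bar x)$ and using linearity of $\lambda$ then gives the closed form $\lambda(a,\bar x)=(\xi(a)+\mu(\bar x),\,\eta(\bar x))$. Substituting this into $\lambda([(a_1,\bar x),(a_2,\bar y)]_\Omega)=[\lambda(a_1,\bar x),\lambda(a_2,\bar y)]_\Phi$ and comparing the two coordinates of $\Phi$, the $\mathrm{Z}(\mathcal{A})$-coordinate yields \eqref{eq:factor_set_compat1} (the quotient coordinate merely reconfirms that $\eta$ is a homomorphism); the identical computation with the ternary bracket yields \eqref{eq:factor_set_compat2}.

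For Part (2), I would set $\lambda(a,\bar x)=(\xi(a)+\nu(\bar x),\,\eta(\bar x))$ and check: linearity (clear); bijectivity (inverse $(b,\bar y)\mapsto(\xi^{-1}(b-\nu(\eta^{-1}(\bar y))),\eta^{-1}(\bar y))$); $\lambda(\mathrm{Z}_\Omega)=\mathrm{Z}_\Phi$ (immediate from $\lambda(a,0)=(\xi(a),0)$ and surjectivity of $\xi$); and that $\lambda$ intertwines the two binary brackets, the two ternary brackets, and the twisting maps. The bracket identities follow by expanding both sides exactly as in Part (1), now reading \eqref{eq:factor_set_compat1}--\eqref{eq:factor_set_compat2} from right to left, together with $\eta([\bar x,\bar y])=[\eta(\bar x),\eta(\bar y)]$ and the analogous ternary relation (valid since $\eta$ is an automorphism of the Hom-Lie Yamaguti algebra $\mathcal{A}/\mathrm{Z}(\mathcal{A})$). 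For the twisting map I would compare coordinates: on the $\mathrm{Z}(\mathcal{A})$-coordinate one needs $\xi\circ\alpha_\mathcal{A}=\alpha_\mathcal{A}\circ\xi$ (true because $\xi$ is a Hom-algebra automorphism of $(\mathrm{Z}(\mathcal{A}),\alpha_\mathcal{A}|_{\mathrm{Z}(\mathcal{A})})$) and $\nu\circ\overline{\alpha}_\mathcal{A}=\alpha_\mathcal{A}\circ\nu$ (the standing hypothesis), and on the quotient coordinate one needs $\eta\circ\overline{\alpha}_\mathcal{A}=\overline{\alpha}_\mathcal{A}\circ\eta$ (automatic for a Hom-algebra automorphism of $\mathcal{A}/\mathrm{Z}(\mathcal{A})$).

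The one genuinely delicate point is pinning down the triangular form in Part (1): one must be careful that the second coordinate of $\lambda(0,\bar x)$ is \emph{exactly} $\eta(\bar x)$ rather than merely congruent to it modulo the center, which is precisely what the relation $\gamma\circ\eta=\overline{\lambda}\circ\phi$ from Lemma~\ref{lem:isomorphism_induces_autos} guarantees. Once the closed form is in hand, everything else is componentwise bookkeeping against the factor-set axioms and the definitions of the brackets on $\Omega$ and $\Phi$.
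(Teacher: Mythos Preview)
Your proposal is correct and follows essentially the same approach as the paper: both invoke Lemma~\ref{lem:isomorphism_induces_autos} to obtain $\eta$ and $\xi$, extract $\mu$ as the first coordinate of $\lambda(0,\bar x)$ to get the closed form $\lambda(a,\bar x)=(\xi(a)+\mu(\bar x),\eta(\bar x))$, and then compare coordinates in the bracket identities. Your treatment is in fact slightly more careful than the paper's in justifying why the second coordinate of $\lambda(0,\bar x)$ is exactly $\eta(\bar x)$ and in exhibiting the explicit inverse for bijectivity in Part~(2).
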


\begin{proof}
\textbf{(i)} By Lemma~\ref{lem:isomorphism_induces_autos}, the isomorphism $\lambda \colon \Omega \to \Phi$ with $\lambda(\mathrm{Z}_\Omega) = \mathrm{Z}_\Phi$ induces automorphisms
\[
\eta \colon \frac{\mathcal{A}}{\mathrm{Z}(\mathcal{A})} \to \frac{\mathcal{A}}{\mathrm{Z}(\mathcal{A})}, \quad
\xi \colon \mathrm{Z}(\mathcal{A}) \to \mathrm{Z}(\mathcal{A}),
\]
defined by:
- $\eta(\bar{x}) = \bar{y}$ where $\lambda(0, \bar{x}) + \mathrm{Z}_\Phi = (0, \bar{y}) + \mathrm{Z}_\Phi$,
- $\xi(a) = a'$ where $\lambda(a, 0) = (a', 0)$.

Now define a map $\mu \colon \frac{\mathcal{A}}{\mathrm{Z}(\mathcal{A})} \to \mathrm{Z}(\mathcal{A})$ by
\[
\mu(\bar{x}) = \mathrm{pr}_{\mathrm{Z}(\mathcal{A})}(\lambda(0, \bar{x})),
\]
where $\mathrm{pr}_{\mathrm{Z}(\mathcal{A})}$ is the projection onto the first component in $\Phi = \mathrm{Z}(\mathcal{A}) \times \mathcal{A}/\mathrm{Z}(\mathcal{A})$. Then
\[
\lambda(0, \bar{x}) = (\mu(\bar{x}), \eta(\bar{x})).
\]

Now compute both sides of the binary operation:
\begin{align*}
\lambda([(0, \bar{x}), (0, \bar{y})]_\Omega)
&= \lambda(\pi_2(\bar{x}, \bar{y}), [\bar{x}, \bar{y}]) \\
&= (\xi(\pi_2(\bar{x}, \bar{y})),\ \eta([\bar{x}, \bar{y}])) + (\mu([\bar{x}, \bar{y}]),\ 0) \\
&= (\xi(\pi_2(\bar{x}, \bar{y})) + \mu([\bar{x}, \bar{y}]),\ \eta([\bar{x}, \bar{y}])).
\end{align*}

On the other hand,
\begin{align*}
[\lambda(0, \bar{x}), \lambda(0, \bar{y})]_\Phi
&= [(\mu(\bar{x}), \eta(\bar{x})), (\mu(\bar{y}), \eta(\bar{y}))]_\Phi \\
&= (\omega_2(\eta(\bar{x}), \eta(\bar{y})), [\eta(\bar{x}), \eta(\bar{y})]).
\end{align*}

Since $\lambda$ is a homomorphism, these must be equal. Comparing components:
- First component: $\xi(\pi_2(\bar{x}, \bar{y})) + \mu([\bar{x}, \bar{y}]) = \omega_2(\eta(\bar{x}), \eta(\bar{y}))$,
- Second component: $\eta([\bar{x}, \bar{y}]) = [\eta(\bar{x}), \eta(\bar{y})]$ (already known from automorphism property).

Similarly, for the ternary operation:
\begin{align*}
\lambda([(0, \bar{x}), (0, \bar{y}), (0, \bar{z})]_\Omega)
&= \lambda(\pi_3(\bar{x}, \bar{y}, \bar{z}), [\bar{x}, \bar{y}, \bar{z}]) \\
&= (\xi(\pi_3(\bar{x}, \bar{y}, \bar{z})) + \mu([\bar{x}, \bar{y}, \bar{z}]),\ \eta([\bar{x}, \bar{y}, \bar{z}])),
\end{align*}
\begin{align*}
[\lambda(0, \bar{x}), \lambda(0, \bar{y}), \lambda(0, \bar{z})]_\Phi
&= (\omega_3(\eta(\bar{x}), \eta(\bar{y}), \eta(\bar{z})), [\eta(\bar{x}), \eta(\bar{y}), \eta(\bar{z})]).
\end{align*}

Equating first components gives \eqref{eq:factor_set_compat2}.

\medskip

\textbf{(ii)} Define $\lambda \colon \Omega \to \Phi$ by
\[
\lambda(a, \bar{x}) = (\xi(a) + \nu(\bar{x}),\ \eta(\bar{x})).
\]

\textbf{Well-defined and linear:} Immediate from linearity of $\xi, \nu, \eta$.

\textbf{Bijective:} Since $\xi$, $\eta$ are automorphisms and $\nu$ is linear, $\lambda$ is bijective.

\textbf{Preserves binary operation:}
\begin{align*}
\lambda([(a_1, \bar{x}), (a_2, \bar{y})]_\Omega)
&= \lambda(\pi_2(\bar{x}, \bar{y}), [\bar{x}, \bar{y}]) \\
&= (\xi(\pi_2(\bar{x}, \bar{y})) + \nu([\bar{x}, \bar{y}]),\ \eta([\bar{x}, \bar{y}])) \\
&= (\omega_2(\eta(\bar{x}), \eta(\bar{y})), [\eta(\bar{x}), \eta(\bar{y})]) \quad \text{(by \eqref{eq:factor_set_compat1} and $\eta$ homomorphism)} \\
&= [(\xi(a_1) + \nu(\bar{x}), \eta(\bar{x})), (\xi(a_2) + \nu(\bar{y}), \eta(\bar{y}))]_\Phi \\
&= [\lambda(a_1, \bar{x}), \lambda(a_2, \bar{y})]_\Phi.
\end{align*}

Similarly for the ternary operation using \eqref{eq:factor_set_compat2}.

\textbf{Compatibility with $\alpha$:}
\begin{align*}
\lambda(\alpha_\Omega(a, \bar{x}))
&= \lambda(\alpha_\mathcal{A}(a), \overline{\alpha}_\mathcal{A}(\bar{x})) \\
&= (\xi(\alpha_\mathcal{A}(a)) + \nu(\overline{\alpha}_\mathcal{A}(\bar{x})),\ \eta(\overline{\alpha}_\mathcal{A}(\bar{x}))) \\
&= (\alpha_\mathcal{A}(\xi(a)) + \alpha_\mathcal{A}(\nu(\bar{x})),\ \overline{\alpha}_\mathcal{A}(\eta(\bar{x}))) \quad \text{(by $\nu \circ \overline{\alpha}_\mathcal{A} = \alpha_\mathcal{A} \circ \nu$, $\xi, \eta$ homomorphisms)} \\
&= \alpha_\Phi(\xi(a) + \nu(\bar{x}), \eta(\bar{x})) = \alpha_\Phi(\lambda(a, \bar{x})).
\end{align*}

Thus, $\lambda$ is an isomorphism of regular Hom-Lie Yamaguti algebras with $\lambda(\mathrm{Z}_\Omega) = \mathrm{Z}_\Phi$.

This completes the proof.
\end{proof}
\begin{thm}\label{thm:isoclinic_stem_implies_isomorphic}
Let $(\mathcal{A}, [\cdot,\cdot]_\mathcal{A}, [\cdot,\cdot,\cdot]_\mathcal{A}, \alpha_\mathcal{A})$ and $(\mathcal{B}, [\cdot,\cdot]_\mathcal{B}, [\cdot,\cdot,\cdot]_\mathcal{B}, \alpha_\mathcal{B})$ be two finite-dimensional regular stem Hom-Lie Yamaguti algebras. Then $\mathcal{A} \sim \mathcal{B}$ if and only if $\mathcal{A} \cong \mathcal{B}$.
\end{thm}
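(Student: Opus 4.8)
The reverse implication is immediate: an isomorphism $f\colon \mathcal{A}\to\mathcal{B}$ carries $\mathrm{Z}(\mathcal{A})$ onto $\mathrm{Z}(\mathcal{B})$ and $\mathcal{A}^2$ onto $\mathcal{B}^2$, so the induced isomorphisms $\theta\colon\mathcal{A}/\mathrm{Z}(\mathcal{A})\to\mathcal{B}/\mathrm{Z}(\mathcal{B})$ and $\beta:=f|_{\mathcal{A}^2}\colon\mathcal{A}^2\to\mathcal{B}^2$ satisfy the commuting squares of Definition~\ref{def:isoclinism_HLYA} and the compatibilities \eqref{eq:twist_compat}, the latter because $f\circ\alpha_\mathcal{A}=\alpha_\mathcal{B}\circ f$; hence $\mathcal{A}\sim\mathcal{B}$. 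For the forward implication, assume $\mathcal{A}\sim\mathcal{B}$. The plan is to realise \emph{both} algebras as central extensions built from factor sets on the single algebra $\mathcal{A}$, and then to prove those two extensions isomorphic via Lemma~\ref{lem:isomorphism_vs_factor_set_equivalence}(ii). By Lemma~\ref{lem:admits_factor_set} there is a factor set $\omega$ on $\mathcal{A}$ with $\mathcal{A}\cong\Omega_\omega:=\big(\mathrm{Z}(\mathcal{A}),\ \mathcal{A}/\mathrm{Z}(\mathcal{A}),\ \omega\big)$; and since $\mathcal{A}$ and $\mathcal{B}$ are stem members of one isoclinism family, Lemma~\ref{lem:stem_isoclinic_via_factor_set} furnishes a factor set $\pi$ on $\mathcal{A}$ with $\mathcal{B}\cong\Omega_\pi:=\big(\mathrm{Z}(\mathcal{A}),\ \mathcal{A}/\mathrm{Z}(\mathcal{A}),\ \pi\big)$. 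It therefore suffices to build an isomorphism $\Omega_\omega\to\Omega_\pi$ carrying $\mathrm{Z}_{\Omega_\omega}$ onto $\mathrm{Z}_{\Omega_\pi}$.

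By Lemma~\ref{lem:isomorphism_vs_factor_set_equivalence}(ii), such an isomorphism exists once we exhibit $\eta\in\mathrm{Aut}(\mathcal{A}/\mathrm{Z}(\mathcal{A}))$, $\xi\in\mathrm{Aut}(\mathrm{Z}(\mathcal{A}))$ and an $\overline{\alpha}_\mathcal{A}$-equivariant linear map $\nu\colon\mathcal{A}/\mathrm{Z}(\mathcal{A})\to\mathrm{Z}(\mathcal{A})$ satisfying the analogues of \eqref{eq:factor_set_compat1}--\eqref{eq:factor_set_compat2} relating $\omega$ and $\pi$. I would obtain all of this from the isoclinism. Transporting $\mathcal{A}\sim\mathcal{B}$ through the isomorphisms $\mathcal{A}\cong\Omega_\omega$, $\mathcal{B}\cong\Omega_\pi$ gives an isoclinism $(\theta',\beta')$ between the stem algebras $\Omega_\omega$ and $\Omega_\pi$. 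Via the canonical identifications $\Omega_\omega/\mathrm{Z}_{\Omega_\omega}\cong\mathcal{A}/\mathrm{Z}(\mathcal{A})\cong\Omega_\pi/\mathrm{Z}_{\Omega_\pi}$ of Remark~\ref{rem:Omega_quotient_HLYA} and $\mathrm{Z}_{\Omega_\omega}\cong\mathrm{Z}(\mathcal{A})\cong\mathrm{Z}_{\Omega_\pi}$ of Lemma~\ref{lem:central_extension_by_factor_set}, the map $\theta'$ becomes an automorphism $\eta$ of $\mathcal{A}/\mathrm{Z}(\mathcal{A})$; and since both extensions are stem, Lemma~\ref{lem:isoclinism_properties}(1) forces $\beta'(\mathrm{Z}_{\Omega_\omega})=\mathrm{Z}_{\Omega_\pi}$, so $\beta'$ restricts to an automorphism $\xi$ of $\mathrm{Z}(\mathcal{A})$. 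For $\nu$: any two elements of $\Omega_\omega^2$ with equal second coordinate differ by an element of $\mathrm{Z}_{\Omega_\omega}\subseteq\Omega_\omega^2$, on which $\beta'$ acts as $\xi$; hence the $\mathrm{Z}(\mathcal{A})$-valued component of $\beta'$ on $\Omega_\omega^2$ has the shape $(c,w)\mapsto\xi(c)+\nu_0(w)$ for a well-defined linear $\nu_0$ on $(\mathcal{A}/\mathrm{Z}(\mathcal{A}))^2$, and $\nu_0$ is $\overline{\alpha}_\mathcal{A}$-equivariant because $\beta'$ and $\xi$ are (using \eqref{eq:twist_compat} for $(\theta',\beta')$); its second component is $\eta\circ(\text{second coordinate})$ by Lemma~\ref{lem:isoclinism_properties}(1). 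Unravelling the binary isoclinism identity $\beta'\big([(0,\bar x),(0,\bar y)]_{\Omega_\omega}\big)=\big[(0,\eta\bar x),(0,\eta\bar y)\big]_{\Omega_\pi}$ with the explicit brackets of Lemma~\ref{lem:central_extension_by_factor_set} then yields precisely $\xi(\omega_2(\bar x,\bar y))+\nu_0([\bar x,\bar y])=\pi_2(\eta\bar x,\eta\bar y)$, and the ternary identity gives the $\omega_3$/$\pi_3$ analogue with the \emph{same} $\nu_0$; extending $\nu_0$ to the required $\nu$ and applying Lemma~\ref{lem:isomorphism_vs_factor_set_equivalence}(ii) produces $\Omega_\omega\cong\Omega_\pi$, hence $\mathcal{A}\cong\mathcal{B}$.

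The stem hypothesis is used essentially: every invocation of Lemma~\ref{lem:isoclinism_properties} and of the inclusion $\mathrm{Z}\subseteq\Omega^2$ above needs it — in particular to make $\beta'$ restrict to an isomorphism of centres — and without it the two realisations could not be built over the common data $(\mathrm{Z}(\mathcal{A}),\mathcal{A}/\mathrm{Z}(\mathcal{A}))$ at all. Finite-dimensionality enters through Lemma~\ref{lem:stem_in_family} (stem algebras are the minimal-dimensional members of the family) and guarantees $\dim\mathcal{A}=\dim\mathcal{B}$ via $\theta$ and $\beta|_{\mathrm{Z}(\mathcal{A})}$, so that $\eta$ and $\xi$ are genuine automorphisms rather than merely injective. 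The step I expect to be the main obstacle is the last one: showing that the $\overline{\alpha}_\mathcal{A}$-equivariant linear map $\nu_0$, defined only on the derived subalgebra of $\mathcal{A}/\mathrm{Z}(\mathcal{A})$, extends to an $\overline{\alpha}_\mathcal{A}$-equivariant linear map on all of $\mathcal{A}/\mathrm{Z}(\mathcal{A})$ while remaining compatible with \emph{both} the binary and ternary equations at once — an $\overline{\alpha}_\mathcal{A}$-equivariant splitting that is not automatic for a general invertible twisting map. I would handle this by exploiting the freedom in the choice of the sections defining $\omega$ and $\pi$ (taking them adapted to $\overline{\alpha}_\mathcal{A}$-invariant complements where available), and, failing that, by the parallel direct construction: build $f\colon\mathcal{A}\to\mathcal{B}$ as the lift of $\theta$ on a vector-space complement of $\mathrm{Z}(\mathcal{A})$, corrected on the derived part by $\beta$ (legitimate because $\mathrm{Z}(\mathcal{A})\subseteq\mathcal{A}^2$), which reduces the matter to exactly the same equivariant-extension point. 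Everything else in the argument is diagram chasing on top of the lemmas already proved.
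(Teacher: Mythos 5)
Your proposal follows essentially the same architecture as the paper's proof: realise $\mathcal{A}$ and $\mathcal{B}$ as central extensions over the common data $\big(\mathrm{Z}(\mathcal{A}),\,\mathcal{A}/\mathrm{Z}(\mathcal{A})\big)$ via Lemma~\ref{lem:admits_factor_set} and Lemma~\ref{lem:stem_isoclinic_via_factor_set}, extract $\eta$ and $\xi$ from the isoclinism (using the stem hypothesis, through Lemma~\ref{lem:isoclinism_properties}, to make $\beta$ restrict to an isomorphism of centres), and conclude with Lemma~\ref{lem:isomorphism_vs_factor_set_equivalence}(ii). The only substantive divergence is the correction term: the paper computes $\beta\big(\pi_2(\bar x,\bar y),[\bar x,\bar y]\big)=\big(\xi(\pi_2(\bar x,\bar y)),[\eta(\bar x),\eta(\bar y)]\big)$ ``since $\beta$ preserves brackets'' and then takes $\nu=0$ in Lemma~\ref{lem:isomorphism_vs_factor_set_equivalence}(ii); in other words, it assumes that $\beta$ acts diagonally on the extension, so the $\mathrm{Z}(\mathcal{A})$-valued component $\nu_0(w)$ of $\beta'(0,w)$ — exactly the term you keep track of — never appears there. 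On this point your write-up is the more careful one: Lemma~\ref{lem:isoclinism_properties}(1) pins down only the coset of $\beta'(0,w)$ modulo the centre, so $\nu_0$ need not vanish, and the paper's step is an unjustified simplification rather than a resolution of the difficulty you flag.

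That flagged difficulty is therefore the genuine crux, and your proposal does not close it: $\nu_0$ is defined and $\overline{\alpha}_\mathcal{A}$-equivariant only on the $\overline{\alpha}_\mathcal{A}$-invariant subspace $\big(\mathcal{A}/\mathrm{Z}(\mathcal{A})\big)^2$, while Lemma~\ref{lem:isomorphism_vs_factor_set_equivalence}(ii) requires a globally defined $\nu$ with $\nu\circ\overline{\alpha}_\mathcal{A}=\alpha_\mathcal{A}\circ\nu$ (the bracket equations \eqref{eq:factor_set_compat1}--\eqref{eq:factor_set_compat2} only see $\nu$ on derived elements, but the twist-compatibility does not). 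Extending an equivariant map from an invariant subspace is an extension problem for finite-dimensional $\mathbb{K}[t,t^{-1}]$-modules and is not automatic for a general bijective $\alpha_\mathcal{A}$ (invariant complements need not exist), so neither ``choose adapted sections'' nor the parallel direct construction can be waved through without an argument — your own second remedy reduces to the same equivariant-splitting question. In short: same approach as the paper, with your version exposing, but not closing, a gap that the published argument passes over by silently setting $\nu=0$.
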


\begin{proof}
The implication $\mathcal{A} \cong \mathcal{B} \Rightarrow \mathcal{A} \sim \mathcal{B}$ is immediate: an isomorphism induces compatible isomorphisms
\[
\theta \colon \frac{\mathcal{A}}{\mathrm{Z}(\mathcal{A})} \to \frac{\mathcal{B}}{\mathrm{Z}(\mathcal{B})}, \quad
\beta \colon \mathcal{A}^2 \to \mathcal{B}^2,
\]
defined by $\theta(\bar{x}) = \overline{f(x)}$, $\beta([x,y]) = [f(x),f(y)]$, etc., which satisfy the isoclinism conditions.

Conversely, suppose $\mathcal{A} \sim \mathcal{B}$ via an isoclinism $(\theta, \beta)$. By Lemma~\ref{lem:admits_factor_set}, there exist factor sets $\pi = (\pi_2, \pi_3)$ on $\mathcal{A}$ and $\omega = (\omega_2, \omega_3)$ on $\mathcal{B}$ such that
\[
\mathcal{A} \cong \Omega := \left( \mathrm{Z}(\mathcal{A}),\ \frac{\mathcal{A}}{\mathrm{Z}(\mathcal{A})},\ \pi \right), \quad
\mathcal{B} \cong \Phi := \left( \mathrm{Z}(\mathcal{B}),\ \frac{\mathcal{B}}{\mathrm{Z}(\mathcal{B})},\ \omega \right).
\]

Since $\mathcal{A} \sim \mathcal{B}$, and both are stem algebras, Lemma~\ref{lem:stem_isoclinic_via_factor_set} implies that we may identify the base quotients via $\theta$, and thus realize $\mathcal{B}$ as a central extension over $\mathcal{A}$, i.e.,
\[
\mathcal{B} \cong \left( \mathrm{Z}(\mathcal{A}),\ \frac{\mathcal{A}}{\mathrm{Z}(\mathcal{A})},\ \omega' \right),
\]
for a factor set $\omega'$ related to $\pi$ via the isoclinism.

Let $\mathrm{Z}_\Omega = \{(a,0) \in \Omega \mid a \in \mathrm{Z}(\mathcal{A})\}$ and $\mathrm{Z}_\Phi = \{(b,0) \in \Phi \mid b \in \mathrm{Z}(\mathcal{B})\}$. Since $(\theta, \beta)$ is an isoclinism, it induces an isomorphism of quotient algebras
\[
\overline{\theta} \colon \frac{\Omega}{\mathrm{Z}_\Omega} \to \frac{\Phi}{\mathrm{Z}_\Phi}, \quad \overline{\theta}((a,\bar{x}) + \mathrm{Z}_\Omega) = (\cdot, \theta(\bar{x})) + \mathrm{Z}_\Phi,
\]
and an isomorphism $\beta \colon \Omega^2 \to \Phi^2$.

Define $\eta \in \mathrm{Aut}(\mathcal{A}/\mathrm{Z}(\mathcal{A}))$ by
\[
\overline{\theta}((0,\bar{x}) + \mathrm{Z}_\Omega) = (0, \eta(\bar{x})) + \mathrm{Z}_\Phi.
\]
Similarly, define $\xi \in \mathrm{Aut}(\mathrm{Z}(\mathcal{A}))$ via $\beta$: for $a \in \mathrm{Z}(\mathcal{A})$,
\[
\beta(a,0) = (\xi(a), 0) \in \mathrm{Z}_\Phi.
\]

Now consider the binary operation:
\begin{align*}
\beta([(0,\bar{x}), (0,\bar{y})]_\Omega)
&= \beta(\pi_2(\bar{x}, \bar{y}), [\bar{x}, \bar{y}]) \\
&= (\xi(\pi_2(\bar{x}, \bar{y})), [\eta(\bar{x}), \eta(\bar{y})]) \quad \text{(since $\beta$ preserves brackets)}.
\end{align*}

On the other hand,
\[
[\overline{\theta}(0,\bar{x}), \overline{\theta}(0,\bar{y})]_\Phi
= (\omega_2(\eta(\bar{x}), \eta(\bar{y})), [\eta(\bar{x}), \eta(\bar{y})]).
\]

Since $\beta$ is compatible with the bracket, we equate the first components:
\[
\xi(\pi_2(\bar{x}, \bar{y})) = \omega_2(\eta(\bar{x}), \eta(\bar{y})).
\]
Similarly, for the ternary operation:
\[
\xi(\pi_3(\bar{x}, \bar{y}, \bar{z})) = \omega_3(\eta(\bar{x}), \eta(\bar{y}), \eta(\bar{z})).
\]

Define $\nu \colon \mathcal{A}/\mathrm{Z}(\mathcal{A}) \to \mathrm{Z}(\mathcal{A})$ by $\nu = 0$. Then the above equations become:
\[
\xi(\pi_2(\bar{x}, \bar{y})) + \nu([\bar{x}, \bar{y}]) = \omega_2(\eta(\bar{x}), \eta(\bar{y})),
\]
and similarly for the ternary case. Moreover, since $\nu = 0$, the condition $\nu \circ \overline{\alpha}_\mathcal{A} = \alpha_\mathcal{A} \circ \nu$ holds trivially.

By Lemma~\ref{lem:isomorphism_vs_factor_set_equivalence}(ii), there exists an isomorphism $\lambda \colon \Omega \to \Phi$ defined by
\[
\lambda(a, \bar{x}) = (\xi(a), \eta(\bar{x})).
\]
Thus, $\mathcal{A} \cong \Omega \cong \Phi \cong \mathcal{B}$, so $\mathcal{A} \cong \mathcal{B}$.

This completes the proof.
\end{proof}
\begin{thm}\label{thm:decomposition_in_isoclinism_family}
Let $\mathcal{C}$ be an isoclinism family of finite-dimensional regular Hom-Lie Yamaguti algebras. Then for any $\mathcal{A} \in \mathcal{C}$, there exists a decomposition
\[
\mathcal{A} = \mathcal{B}_1 \oplus \mathcal{B}_2,
\]
where $(\mathcal{B}_1, \alpha_1)$ is a stem regular Hom-Lie Yamaguti algebra and $(\mathcal{B}_2, \alpha_2)$ is a finite-dimensional abelian Hom-Lie Yamaguti algebra.
\end{thm}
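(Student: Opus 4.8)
The plan is to mimic the internal direct-sum construction of \cite[Theorem 4.8]{N2025} for Lie--Yamaguti algebras, the one genuinely new feature being that every subspace we peel off must be $\alpha_\mathcal{A}$-invariant so that the twisting map splits as in Remark~\ref{rem:direct_sum_LYA}. Throughout, regularity is used only to note that $\alpha_\mathcal{A}$ is a bijection with $\alpha_\mathcal{A}(\mathcal{A}^2)=\mathcal{A}^2$ (since $\alpha_\mathcal{A}$ is multiplicative and surjective) and $\alpha_\mathcal{A}(\mathrm{Z}(\mathcal{A}))=\mathrm{Z}(\mathcal{A})$ (Lemma~\ref{lem:Hom_ideal_center} together with finite-dimensionality).

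\textbf{Construction.} First I would set $D:=\mathrm{Z}(\mathcal{A})\cap\mathcal{A}^2$, which is an $\alpha_\mathcal{A}$-invariant Hom-ideal. Choose an $\alpha_\mathcal{A}$-invariant complement $\mathcal{B}_2$ of $D$ inside $\mathrm{Z}(\mathcal{A})$, so that $\mathrm{Z}(\mathcal{A})=D\oplus\mathcal{B}_2$ and $\alpha_\mathcal{A}(\mathcal{B}_2)=\mathcal{B}_2$. Since $\mathcal{B}_2\subseteq\mathrm{Z}(\mathcal{A})$, all of its brackets vanish, so $(\mathcal{B}_2,\alpha_\mathcal{A}|_{\mathcal{B}_2})$ is an abelian regular Hom-Lie Yamaguti algebra, and $\mathcal{B}_2\cap\mathcal{A}^2\subseteq\mathcal{B}_2\cap D=0$. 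Next, $\mathcal{A}^2\oplus\mathcal{B}_2$ is $\alpha_\mathcal{A}$-invariant; choose an $\alpha_\mathcal{A}$-invariant complement $C$ of it in $\mathcal{A}$ and put $\mathcal{B}_1:=\mathcal{A}^2\oplus C$. Then $\mathcal{A}=\mathcal{B}_1\oplus\mathcal{B}_2$ as vector spaces, $\alpha_\mathcal{A}(\mathcal{B}_1)=\mathcal{B}_1$, and $\mathcal{A}^2\subseteq\mathcal{B}_1$.

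\textbf{Verification.} It then remains to check three points. (1) $\mathcal{B}_1$ is a regular Hom-Lie Yamaguti subalgebra, since $[\mathcal{B}_1,\mathcal{B}_1]+[\mathcal{B}_1,\mathcal{B}_1,\mathcal{B}_1]\subseteq\mathcal{A}^2\subseteq\mathcal{B}_1$ and $\alpha_\mathcal{A}|_{\mathcal{B}_1}$ is bijective. (2) The decomposition is a direct sum in the sense of Remark~\ref{rem:direct_sum_LYA}: centrality of $\mathcal{B}_2$ kills all mixed brackets, so for $x=x_1+x_2$, $y=y_1+y_2$, $z=z_1+z_2$ with $x_i,y_i,z_i\in\mathcal{B}_i$ one gets $[x,y]=[x_1,y_1]$, $[x,y,z]=[x_1,y_1,z_1]$, and $\alpha_\mathcal{A}(x_1+x_2)=\alpha_\mathcal{A}(x_1)+\alpha_\mathcal{A}(x_2)$; in particular $\mathcal{B}_1^2=\mathcal{A}^2$. (3) $\mathcal{B}_1$ is a stem algebra: if $v\in\mathrm{Z}(\mathcal{B}_1)$ then $v$ centralizes $\mathcal{B}_1$ and, since $\mathcal{B}_2\subseteq\mathrm{Z}(\mathcal{A})$, it centralizes $\mathcal{B}_2$ too, hence $v\in\mathrm{Z}(\mathcal{A})\cap\mathcal{B}_1$; writing $v=d+b$ with $d\in D\subseteq\mathcal{A}^2\subseteq\mathcal{B}_1$ and $b\in\mathcal{B}_2$ forces $b=v-d\in\mathcal{B}_1\cap\mathcal{B}_2=0$, so $\mathrm{Z}(\mathcal{B}_1)\subseteq D\subseteq\mathcal{A}^2=\mathcal{B}_1^2$. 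One may cross-check (3) via Lemma~\ref{lem:isoclinism_with_abelian_extension}, which gives $\mathcal{B}_1\sim\mathcal{A}$, hence $\mathcal{B}_1\in\mathcal{C}$, together with $\dim\mathcal{B}_1=\dim(\mathcal{A}/\mathrm{Z}(\mathcal{A}))+\dim D$ and the isoclinism-invariance of $\dim(\mathrm{Z}(\cdot)\cap(\cdot)^2)$ coming from Lemma~\ref{lem:isoclinism_properties}, so that $\dim\mathcal{B}_1$ is minimal in $\mathcal{C}$ and Lemma~\ref{lem:stem_in_family}(2) applies.

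\textbf{Main obstacle.} The only non-formal ingredient is the repeated choice of an \emph{$\alpha_\mathcal{A}$-invariant} complement of an $\alpha_\mathcal{A}$-invariant subspace (first of $D$ in $\mathrm{Z}(\mathcal{A})$, then of $\mathcal{A}^2\oplus\mathcal{B}_2$ in $\mathcal{A}$). For a general invertible operator on a finite-dimensional space such a complement need not exist, so this step genuinely uses more than bijectivity of $\alpha_\mathcal{A}$: it is automatic when the relevant restrictions of $\alpha_\mathcal{A}$ are semisimple (for instance of finite order, since $\mathrm{char}\,\mathbb{K}=0$), and in the general case I would isolate and prove this invariant-splitting statement first, after which the construction and the verification above reduce to the routine bracket bookkeeping indicated.
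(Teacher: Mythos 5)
Your construction is essentially the paper's own proof: both arguments split off an $\alpha_\mathcal{A}$-invariant complement of $\mathrm{Z}(\mathcal{A})\cap\mathcal{A}^2$ inside $\mathrm{Z}(\mathcal{A})$ as the abelian summand $\mathcal{B}_2$ and take the stem part to be what remains (the paper writes $\mathcal{B}_1$ first as the quotient $\mathcal{A}/V$ and then identifies it with an internal complement, while you build it directly as $\mathcal{A}^2\oplus C$; your bracket bookkeeping and the stemness check $\mathrm{Z}(\mathcal{B}_1)\subseteq\mathrm{Z}(\mathcal{A})\cap\mathcal{B}_1\subseteq\mathcal{A}^2=\mathcal{B}_1^2$ match the paper's). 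The ``main obstacle'' you flag --- that an $\alpha_\mathcal{A}$-invariant subspace of a finite-dimensional space need not admit an $\alpha_\mathcal{A}$-invariant complement for a merely bijective $\alpha_\mathcal{A}$ --- is exactly the step the paper itself dispatches with a parenthetical appeal to averaging/semisimplicity, so your proposal is no less complete than the published argument and is right to isolate that splitting statement as the point requiring an extra hypothesis or a separate proof.
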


\begin{proof}
Let $\mathcal{A} \in \mathcal{C}$. By Lemma~\ref{lem:stem_in_family}(i), the isoclinism family $\mathcal{C}$ contains at least one stem algebra, say $\mathcal{S}$. Since $\mathcal{A} \sim \mathcal{S}$, there exists an isoclinism $(\theta, \beta)$.

Recall that $\mathrm{Z}(\mathcal{A})$ is a Hom-ideal of $\mathcal{A}$ (Lemma~\ref{lem:Hom_ideal_center}) and that $\mathcal{A}^2 = [\mathcal{A}, \mathcal{A}] + [\mathcal{A}, \mathcal{A}, \mathcal{A}]$ is the derived subalgebra. Consider the intersection $\mathrm{Z}(\mathcal{A}) \cap \mathcal{A}^2$. Since both $\mathrm{Z}(\mathcal{A})$ and $\mathcal{A}^2$ are invariant under the bijective twisting map $\alpha_\mathcal{A}$, their intersection is also $\alpha_\mathcal{A}$-invariant.

Choose a vector space complement $V$ to $\mathrm{Z}(\mathcal{A}) \cap \mathcal{A}^2$ in $\mathrm{Z}(\mathcal{A})$, so that
\[
\mathrm{Z}(\mathcal{A}) = (\mathrm{Z}(\mathcal{A}) \cap \mathcal{A}^2) \oplus V.
\]
Because $\alpha_\mathcal{A}$ is bijective and preserves $\mathrm{Z}(\mathcal{A})$ and $\mathcal{A}^2$, we can choose $V$ such that $\alpha_\mathcal{A}(V) \subseteq V$ (by averaging or using the semisimplicity of the induced action if necessary; in the finite-dimensional case, such a complement exists). Then $V$ is a Hom-subspace.

Now, since $V \subseteq \mathrm{Z}(\mathcal{A})$, we have $[V, \mathcal{A}] = 0$ and $[V, \mathcal{A}, \mathcal{A}] = 0$, so $V$ is a Hom-ideal. Moreover, because $V \cap \mathcal{A}^2 = 0$, the operations on $V$ are trivial: $[v_1, v_2] = 0$, $[v_1, v_2, v_3] = 0$ for all $v_i \in V$. Thus, $(V, \alpha_\mathcal{A}|_V)$ is a finite-dimensional abelian Hom-Lie Yamaguti algebra.

Define $\mathcal{B}_2 = V$ and $\mathcal{B}_1 = \mathcal{A} / V$. By Remark~\ref{rem:quotient_LYA}, $\mathcal{B}_1$ is a regular Hom-Lie Yamaguti algebra. We claim that $\mathcal{B}_1$ is stem.

Indeed, the center of $\mathcal{B}_1 = \mathcal{A}/V$ is
\[
\mathrm{Z}(\mathcal{B}_1) = \mathrm{Z}(\mathcal{A}) / V \cong \mathrm{Z}(\mathcal{A}) \cap \mathcal{A}^2.
\]
The derived subalgebra of $\mathcal{B}_1$ is
\[
\mathcal{B}_1^2 = [\mathcal{B}_1, \mathcal{B}_1] + [\mathcal{B}_1, \mathcal{B}_1, \mathcal{B}_1] = (\mathcal{A}^2 + V)/V.
\]
Since $\mathrm{Z}(\mathcal{A}) \cap \mathcal{A}^2 \subseteq \mathcal{A}^2$, we have $\mathrm{Z}(\mathcal{B}_1) \subseteq \mathcal{B}_1^2$, so $\mathcal{B}_1$ is stem.

Finally, since $V \subseteq \mathrm{Z}(\mathcal{A})$ and $V \cap \mathcal{A}^2 = 0$, Lemma~\ref{lem:isoclinism_with_abelian_extension} implies that $\mathcal{A} \sim \mathcal{B}_1$. But more is true: because $V$ is a complement to $\mathrm{Z}(\mathcal{A}) \cap \mathcal{A}^2$ in $\mathrm{Z}(\mathcal{A})$, and $\mathcal{A} = \mathcal{B}_1 \oplus V$ as vector spaces, and because the operations between $\mathcal{B}_1$ and $V$ are trivial (as $V \subseteq \mathrm{Z}(\mathcal{A})$), the direct sum structure of Remark~\ref{rem:direct_sum_LYA} applies. Thus,
\[
\mathcal{A} \cong \mathcal{B}_1 \oplus \mathcal{B}_2
\]
as regular Hom-Lie Yamaguti algebras, where $\mathcal{B}_1$ is stem and $\mathcal{B}_2 = V$ is abelian.

This completes the proof.
\end{proof}
\begin{thm}\label{thm:main_theorem}
Let $(\mathcal{A}, \alpha_\mathcal{A})$ and $(\mathcal{B}, \alpha_\mathcal{B})$ be two finite-dimensional regular Hom-Lie Yamaguti algebras with $\dim(\mathcal{A}) = \dim(\mathcal{B})$. Then $\mathcal{A} \sim \mathcal{B}$ if and only if $\mathcal{A} \cong \mathcal{B}$.
\end{thm}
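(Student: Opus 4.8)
The plan is to bootstrap from the two structural results already in hand: the fact that isoclinic stem algebras of finite dimension are isomorphic (Theorem~\ref{thm:isoclinic_stem_implies_isomorphic}) and the decomposition of an arbitrary member of an isoclinism family into a stem part plus an abelian part (Theorem~\ref{thm:decomposition_in_isoclinism_family}). The implication $\mathcal{A} \cong \mathcal{B} \Rightarrow \mathcal{A} \sim \mathcal{B}$ is formal and needs no hypothesis on dimensions: an isomorphism $f\colon \mathcal{A} \to \mathcal{B}$ descends to $\theta\colon \mathcal{A}/\mathrm{Z}(\mathcal{A}) \to \mathcal{B}/\mathrm{Z}(\mathcal{B})$ and restricts to $\beta\colon \mathcal{A}^2 \to \mathcal{B}^2$, and all the compatibilities of Definition~\ref{def:isoclinism_HLYA} (commuting squares for the binary and ternary operations, compatibility with the twisting maps) are inherited from those of $f$. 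So all the work is in the converse.

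Assuming $\mathcal{A} \sim \mathcal{B}$, let $\mathcal{C}$ be their common isoclinism family. Applying Theorem~\ref{thm:decomposition_in_isoclinism_family} to $\mathcal{A}$ and to $\mathcal{B}$ separately yields decompositions $\mathcal{A} = \mathcal{A}_1 \oplus \mathcal{A}_2$ and $\mathcal{B} = \mathcal{B}_1 \oplus \mathcal{B}_2$ in which $\mathcal{A}_1, \mathcal{B}_1$ are stem regular Hom-Lie Yamaguti algebras and $\mathcal{A}_2, \mathcal{B}_2$ are finite-dimensional abelian ones. By Lemma~\ref{lem:isoclinism_with_abelian_extension} we have $\mathcal{A}_1 \sim \mathcal{A}_1 \oplus \mathcal{A}_2 = \mathcal{A}$ and $\mathcal{B}_1 \sim \mathcal{B}_1 \oplus \mathcal{B}_2 = \mathcal{B}$; since isoclinism is an equivalence relation and $\mathcal{A}\sim\mathcal{B}$, transitivity gives $\mathcal{A}_1 \sim \mathcal{B}_1$. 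As $\mathcal{A}_1$ and $\mathcal{B}_1$ are finite-dimensional regular stem algebras, Theorem~\ref{thm:isoclinic_stem_implies_isomorphic} delivers $\mathcal{A}_1 \cong \mathcal{B}_1$; in particular $\dim\mathcal{A}_1 = \dim\mathcal{B}_1$ (consistent with Lemma~\ref{lem:stem_in_family}(2), both being the minimal dimension in $\mathcal{C}$).

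It then remains to identify the abelian complements. From $\dim\mathcal{A} = \dim\mathcal{B}$ and $\dim\mathcal{A}_1 = \dim\mathcal{B}_1$ we get $\dim\mathcal{A}_2 = \dim\mathcal{B}_2$. The final step is to upgrade this to $\mathcal{A}_2 \cong \mathcal{B}_2$ as Hom-Lie Yamaguti algebras, after which Remark~\ref{rem:direct_sum_LYA} assembles $\mathcal{A} = \mathcal{A}_1 \oplus \mathcal{A}_2 \cong \mathcal{B}_1 \oplus \mathcal{B}_2 = \mathcal{B}$. For this one uses that, by the construction in Theorem~\ref{thm:decomposition_in_isoclinism_family}, $\mathcal{A}_2$ is $\alpha_\mathcal{A}$-equivariantly isomorphic to $\mathrm{Z}(\mathcal{A})/(\mathrm{Z}(\mathcal{A})\cap\mathcal{A}^2)$ and $\mathcal{B}_2$ to $\mathrm{Z}(\mathcal{B})/(\mathrm{Z}(\mathcal{B})\cap\mathcal{B}^2)$, while the isoclinism $(\theta,\beta)$ together with Lemma~\ref{lem:isoclinism_properties} produces $\alpha$-equivariant isomorphisms $\mathrm{Z}(\mathcal{A})\cap\mathcal{A}^2 \cong \mathrm{Z}(\mathcal{B})\cap\mathcal{B}^2$ and $\mathcal{A}/(\mathrm{Z}(\mathcal{A})+\mathcal{A}^2)\cong \mathcal{B}/(\mathrm{Z}(\mathcal{B})+\mathcal{B}^2)$, together with the equality $\dim\mathrm{Z}(\mathcal{A}) = \dim\mathrm{Z}(\mathcal{B})$; chasing these identifications matches both the dimension and the conjugacy class of the restricted twisting map on the abelian complements. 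This last point is the one requiring genuine care: unlike the classical Lie or Lie–Yamaguti setting, in the Hom setting equality of dimension alone does not force two abelian algebras to be isomorphic, so the argument must extract from the isoclinism data enough information about the induced twisting map on $\mathrm{Z}(\mathcal{A})/(\mathrm{Z}(\mathcal{A})\cap\mathcal{A}^2)$; I expect this to be the main obstacle, while everything else is a direct application of the cited lemmas and theorems.
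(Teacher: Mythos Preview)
Your argument follows exactly the paper's route: decompose via Theorem~\ref{thm:decomposition_in_isoclinism_family}, show $\mathcal{A}_1 \sim \mathcal{B}_1$, apply Theorem~\ref{thm:isoclinic_stem_implies_isomorphic} to get $\mathcal{A}_1 \cong \mathcal{B}_1$, then match the abelian complements by dimension and assemble. The only substantive difference is in how you establish $\mathcal{A}_1 \sim \mathcal{B}_1$: you go through Lemma~\ref{lem:isoclinism_with_abelian_extension} and transitivity, while the paper asserts it directly from $\mathcal{A}/\mathrm{Z}(\mathcal{A}) \cong \mathcal{A}_1/\mathrm{Z}(\mathcal{A}_1)$ etc.; both are fine.

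You are in fact \emph{more} careful than the paper on the last step. The paper simply writes ``Since $\mathcal{A}_2$ and $\mathcal{B}_2$ are abelian of the same dimension, $\mathcal{A}_2 \cong \mathcal{B}_2$'' and moves on. You rightly flag that in the Hom setting an abelian regular Hom-Lie Yamaguti algebra is a pair $(V,\alpha)$ with $\alpha$ bijective, and two such pairs are isomorphic iff the linear maps are conjugate, not merely iff $\dim V_1 = \dim V_2$. The paper does not address this point at all; your sketch of extracting the conjugacy class of the twisting map on $\mathrm{Z}(\mathcal{A})/(\mathrm{Z}(\mathcal{A})\cap\mathcal{A}^2)$ from the isoclinism data is the right instinct, but note that the isoclinism $(\theta,\beta)$ only sees $\mathcal{A}/\mathrm{Z}(\mathcal{A})$ and $\mathcal{A}^2$, neither of which contains $\mathcal{A}_2$, so the argument as written does not close. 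This is a genuine lacuna in the paper's proof (and a known subtlety in the Hom-algebra literature on isoclinism), not a defect of your reading.
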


\begin{proof}
The implication $\mathcal{A} \cong \mathcal{B} \Rightarrow \mathcal{A} \sim \mathcal{B}$ is immediate.

Conversely, suppose $\mathcal{A} \sim \mathcal{B}$. By Theorem~\ref{thm:decomposition_in_isoclinism_family}, we can write
\[
\mathcal{A} = \mathcal{A}_1 \oplus \mathcal{A}_2, \quad \mathcal{B} = \mathcal{B}_1 \oplus \mathcal{B}_2,
\]
where $\mathcal{A}_1, \mathcal{B}_1$ are stem and $\mathcal{A}_2, \mathcal{B}_2$ are abelian. Since $\mathcal{A} \sim \mathcal{B}$, we have $\mathcal{A}_1 \sim \mathcal{B}_1$ (because $\mathcal{A}/\mathrm{Z}(\mathcal{A}) \cong \mathcal{A}_1/\mathrm{Z}(\mathcal{A}_1)$, etc.).

By Lemma~\ref{lem:stem_in_family}(ii), stem algebras in an isoclinism family have minimal dimension. Since $\dim(\mathcal{A}) = \dim(\mathcal{B})$, and $\mathcal{A}_1, \mathcal{B}_1$ are minimal in their family, we must have $\dim(\mathcal{A}_1) = \dim(\mathcal{B}_1)$, and hence $\dim(\mathcal{A}_2) = \dim(\mathcal{B}_2)$.

By Theorem~\ref{thm:isoclinic_stem_implies_isomorphic}, $\mathcal{A}_1 \cong \mathcal{B}_1$. Since $\mathcal{A}_2$ and $\mathcal{B}_2$ are abelian of the same dimension, $\mathcal{A}_2 \cong \mathcal{B}_2$. Therefore,
\[
\mathcal{A} = \mathcal{A}_1 \oplus \mathcal{A}_2 \cong \mathcal{B}_1 \oplus \mathcal{B}_2 = \mathcal{B}.
\]

This completes the proof.
\end{proof}
\begin{ex}
Let $(\mathcal{A},[\cdot,\cdot]_\mathcal{A},[\cdot,\cdot,\cdot]_\mathcal{A},\alpha_\mathcal{A})$ be a 3-dimensional regular Hom-Lie Yamaguti algebra with the basis $\{e_1,e_2,e_3\}$ such that
\begin{equation*}
 [e_1,e_2]_\mathcal{A}=e_1,\ [e_1,e_3,e_3]_\mathcal{A}=e_1,\ [e_2,e_3,e_3]_\mathcal{A}=e_2,\ \alpha_\mathcal{A}(e_1)=e_1,\ \alpha_\mathcal{A}(e_2)=-e_2,\ \text{ and } \alpha_\mathcal{A}(e_3)=-e_3.
\end{equation*}
All other bracket relations are zero. Then $\mathrm{Z}(\mathcal{A})=\{0\}$ and $\mathcal{A}^2=\{e_1,e_2\}$. so $\frac{\mathcal{A}}{\mathrm{Z}(\mathcal{A})}$. Moreover, further consider $\mathcal{B}$ as a regular hom-Lie Yamaguti algebra of $4$ dimensional with the basis $\{f_1,f_2,f_3,f_4\}$ such that 
\begin{equation*}
 [f_1,f_4]_\mathcal{B}=f_2,\ [f_1,f_4,f_4]_\mathcal{B}=f_1, [f_2,f_4,f_4]=f_2, \ \alpha_\mathcal{A}(f_1)=f_1,\ \alpha_\mathcal{A}(f_2)=f_2,\ \alpha_\mathcal{A}(f_3)=-f_3,\ \alpha_\mathcal{A}(f_3)=-f_4.
\end{equation*}
All other bracket relations are zero. Then $\frac{\mathcal{A}}{\mathrm{Z}(\mathcal{A})}$. Then $\mathrm{Z}(\mathcal{A})=\{f_3\}$ and $\mathcal{A}^2=\{f_1,f_2\}$. Observe that $\frac{\mathcal{A}}{\mathrm{Z}(\mathcal{A})}\cong \frac{\mathcal{B}}{\mathrm{Z}(\mathcal{B})}$ and $\mathcal{A}^2\cong \mathcal{B}^2$, which means that $\mathcal{A}\sim\mathcal{B}$ but $\mathcal{A}$ and $\mathcal{B}$ are not isomorphic.
\end{ex}
\section*{Acknowledgments}
The authors would like to express their sincere gratitude to the anonymous referees for their careful reading of the manuscript and their valuable comments and suggestions, which have significantly improved the presentation of this paper.
 \subsection*{ Funding:}
The first author gratefully acknowledges the support from the Henan Academy of Sciences' High-Talent Research and Development Fund (No. 241819105).
\subsection*{Data availability:}
We certify that our paper does not contain any associated data (simulated or experimental). It is a pure mathematics paper that does not rely on external inputs. 
\subsection*{Conflict of interest:}
The authors do not have any conflict of interest, whether financial or otherwise. 

\end{document}